\documentclass[11pt]{article}
\usepackage[final]{epsfig}
\usepackage[left=3cm,right=3cm, top=2.5cm,bottom=2.5cm,bindingoffset=0cm]{geometry}
\usepackage{amsmath}
\usepackage{amsfonts}
\usepackage{amsthm}
\usepackage{latexsym}
\usepackage{amssymb}
\usepackage{mathabx}
\usepackage{graphicx, tikz-cd}
\usepackage{epstopdf, setspace}
\usepackage{booktabs}
\usepackage{multicol,multirow, enumitem}
\usepackage{hyperref, mathtools}
\DeclareGraphicsRule{.tif}{png}{.png}{`convert #1 `basename #1 tif`.png}
\newlist{longenum}{enumerate}{5}
\setlist[longenum,1]{label=\roman*)}
\setlist[longenum,2]{label=\alph*)}

\usepackage{calrsfs}
\DeclareMathAlphabet{\pazocal}{OMS}{zplm}{m}{n}

\usepackage{tikz}
\usetikzlibrary{positioning}
\usetikzlibrary{decorations.text}
\usetikzlibrary{decorations.pathmorphing}
\usetikzlibrary{decorations.markings}

\usetikzlibrary{arrows} 
\tikzset{
    >=stealth',
    pil/.style={
           ->,
           thick,
           shorten <=2pt,
           shorten >=2pt,}
}
\tikzset{->-/.style={decoration={
  markings,
  mark=at position .7 with {\arrow{>}}},postaction={decorate}}}
  \tikzset{a/.style={decoration={
  markings,
  mark=at position .52 with {\arrow{angle 90}}},postaction={decorate}}}
\tikzset{-<-/.style={decoration={
  markings,
  mark=at position .4 with {\arrow{<}}},postaction={decorate}}}  

\newenvironment{customthm}[1]
  {\innercustomthm}
  {\endinnercustomthm}

\newtheorem{lemma}{Lemma}[section]
\newtheorem{proposition}[lemma]{Proposition}
\newtheorem{remark-definition}[lemma]{Remark-Definition}
\newtheorem{example}[lemma]{Example}
\newtheorem{theorem}[lemma]{Theorem}
\newtheorem{corollary}[lemma]{Corollary}

\newtheorem{proposition-conjecture}[lemma]{Proposition-conjecture}
\newtheorem{problem}[lemma]{Problem}

\theoremstyle{definition}

\newtheorem{definition}[lemma]{Definition}
\newtheorem{remark}[lemma]{Remark}

\begin{document}
\newcommand{\eps}{{\varepsilon}}
\newcommand{\proofend}{\hfill$\Box$\bigskip}
\newcommand{\C}{{\mathbb C}}
\newcommand{\Q}{{\mathbb Q}}
\newcommand{\R}{{\mathbb R}}
\newcommand{\Z}{{\mathbb Z}}
\newcommand{\RP}{{\mathbb {RP}}}
\newcommand{\CP}{{\mathbb {CP}}}
\newcommand{\PP}{{\mathbb {P}}}
\newcommand{\ep}{\epsilon}
\newcommand{\G}{{\Gamma}}

\newcommand{\SDiff}{\mathrm{SDiff}}
\newcommand{\SVect}{\mathfrak{s}\mathfrak{vect}}
\newcommand{\vect}{\mathfrak{vect}}

\newcommand{\Ker}[1]{\mathrm{Ker} \, #1}
\newcommand{\grad}[1]{\mathrm{grad} \, #1}
\newcommand{\sgrad}[1]{\mathrm{sgrad} \, #1}
\newcommand{\St}[1]{\mathrm{St} \, #1}
\newcommand{\rank}[1]{\mathrm{rank} \, #1}
\newcommand{\codim}[1]{\mathrm{codim} \, #1}
\newcommand{\corank}[1]{\mathrm{corank} \, #1}
\newcommand{\sgn}[1]{\mathrm{sgn} \, #1}
\newcommand{\ann}[1]{\mathrm{ann} \, #1}
\newcommand{\ind}[1]{\mathrm{ind} \, #1}
\newcommand{\Ann}[1]{\mathrm{Ann} \, #1}
\newcommand{\ls}[1]{\mathrm{span} \langle #1 \rangle}
\newcommand{\Tor}[1]{\mathrm{Tor} \, #1}
\newcommand{\diff}[1]{\mathrm{d}  #1}
\newcommand{\diffFX}[2]{ \dfrac{\partial #1}{\partial #2} }
\newcommand{\diffFXp}[2]{ \dfrac{\diff #1}{\diff #2} }
\newcommand{\diffX}[1]{ \frac{\partial }{\partial #1} }
\newcommand{\diffXp}[1]{ \frac{\diff }{\diff #1} }
\newcommand{\diffFXY}[3]{ \frac{\partial^2 #1}{\partial #2 \partial #3} }
\newcommand{\K}{\mathbb{K}}
\newcommand{\centrum}{\mathrm{Z}}
\newcommand{\Complex}{\mathbb{C}}
\newcommand{\Aut}{\mathrm{Aut}}
\newcommand{\Id}{\mathrm{E}}
\newcommand{\D}{\mathrm{D}}
\newcommand{\T}{\mathrm{T}}
\newcommand{\Cont}{\mathrm{C}}
\newcommand{\const}{\mathrm{const}}
\newcommand{\Hom}{\mathrm{H}}
\newcommand{\Ree}[1]{\mathrm{Re} \, #1}
\newcommand{\Imm}[1]{\mathrm{Im} \, #1}
\newcommand{\Tr}[1]{\mathrm{Tr} \, #1}
\newcommand{\tr}[1]{\mathrm{tr} \, #1}
\newcommand{\matrixtwobytwo}{\left(\begin{array}{|cc|}\hline 0 & 0 \\0 & 0 \\\hline \end{array}\right)}
\newcommand{\wave}{\tilde}
\newcommand{\LieBracket}{ [\, , ] }
\newcommand{\PoissonBracket}{ \{ \, , \} }
\newcommand{\g}{\mathfrak{g}}
\newcommand{\h}{\mathfrak{h}}
\newcommand{\lCal}{\mathfrak{l}}
\newcommand{\e}{\mathfrak{e}}
\newcommand{\so}{\mathfrak{so}}
\newcommand{\SO}{\mathrm{SO}}
\newcommand{\Orth}{\mathrm{O}}
\newcommand{\U}{\mathrm{U}}
\newcommand{\he}{\mathfrak{hso}}
\newcommand{\ELL}{\mathfrak{D}}
\newcommand{\hyp}{\mathfrak{D}^{h}}
\newcommand{\foc}{\mathfrak{D}^{\Complex}}
\newcommand{\sP}{\mathfrak{sp}}
\newcommand{\sL}{\mathfrak{sl}}
\newcommand{\ad}{\mathrm{ad}}
\newcommand{\Ad}{\mathrm{Ad}}
\newcommand{\zenter}{\mathrm{Z}}
\newcommand{\id}{\mathrm{id}}
\newcommand{\Ham}{\mathrm{Ham}}
\newcommand{\ham}{\mathfrak{ham}}
\newcommand{\Flux}{\mathrm{Flux}}
\newcommand{\Diffeo}{\mathrm{Diff}}
\newcommand{\halfTwist}{\mathrm{ht}}

\renewcommand{\proofname}{Proof}

\newcommand{\oneform}{\alpha}
\newcommand{\oneformtwo}{\beta}
\newcommand{\oneformthree}{\gamma}

\newcommand{\circulation}{ \pazocal C}
\newcommand{\circulationtwo}{ \circulation'}
\newcommand{\orbit}{\pazocal O}

\newcommand{\Fibr}{\pazocal{F}}
\newcommand{\Fibrtwo}{\pazocal{G}}

\newcommand{\Diff}{\pazocal{D}}

\newcommand{\MCG}{\mathrm{Mod}}
\newcommand{\Stab}{\mathrm{Stab}}

\newcounter{bk}
\newcommand{\bk}[1]
{\stepcounter{bk}$^{\bf BK\thebk}$%
\footnotetext{\hspace{-3.7mm}$^{\blacksquare\!\blacksquare}$
{\bf BK\thebk:~}#1}}

\newcounter{ai}
\newcommand{\ai}[1]
{\stepcounter{ai}$^{\bf AI\theai}$%
\footnotetext{\hspace{-3.7mm}$^{\blacksquare\!\blacksquare}$
{\bf AI\theai:~}#1}}



\title{Coadjoint orbits of symplectic diffeomorphisms of surfaces\\ and ideal hydrodynamics}

\author{Anton Izosimov\thanks{
Department of Mathematics,
University of Toronto, Toronto, ON M5S 2E4, Canada;
e-mail: {\tt izosimov@math.toronto.edu}, {\tt khesin@math.toronto.edu}
} , 
Boris Khesin*, 
and Mehdi Mousavi\thanks{
Department of Statistical and Actuarial Sciences, University of Western Ontario, London, ON N6A 5B7, Canada;
e-mail: \tt{smousav4@uwo.ca}
}
\\
}

\date{}
\maketitle
\begin{abstract} We give a classification of generic coadjoint orbits
for the groups of
symplectomorphisms and Hamiltonian diffeomorphisms of a closed
symplectic surface. We also classify simple Morse functions on
symplectic surfaces with respect to actions of those groups. This
gives an answer to V.Arnold's problem on describing all invariants of generic
isovorticed fields for the 2D ideal fluids.
For this we introduce a notion of anti-derivatives on a measured Reeb
graph and describe their properties.
\end{abstract}

\tableofcontents


\section{Introduction}
In this paper we classify generic coadjoint orbits of several diffeomorphism groups of surfaces. In particular, we answer a question about a complete set of invariants for generic
isovorticed fields in 2D ideal hydrodynamics posed by V.Arnold in \cite{AK}.

Recall that the corresponding classification problem for diffeomorphisms of the circle was solved 
by A.\,Kirillov in \cite{Kir1, kirillov1993orbit}. He showed that it is equivalent to classification of 
periodic quadratic differentials and described Casimirs for generic orbits. 
Orbits of the Virasoro--Bott group, a nontrivial extension of the circle diffeomorphism group, 
were classified independently in different terms by several authors, 
see \cite{Kir1, Seg1, Wit}. The latter problem is also equivalent to the classification 
of Hill's operators or projective structures on the circle. 
All those results deal with 
diffeomorphisms  of one-dimensional manifolds.

The classification problem for coadjoint orbits of area-preserving diffeomorphisms 
in two dimensions was known to specialists
for quite a while in view of its application in fluid dynamics, and it was explicitly formulated 
in \cite{AK}, Section I.5 in 1998. So far, to the best of our knowledge, the global classification was beyond reach and 
there were no results in this direction. 

In this paper we give an answer to this question by  describing the orbit classification 
 for symplectic and Hamiltonian diffeomorphisms of an arbitrary 2D oriented surface.

To obtain these classifications we first solve an auxiliary problem, which is of interest by itself: 
classify (and describe  invariants of) generic Morse functions on closed surfaces 
with respect to the action of area-preserving diffeomorphisms (possibly isotoped to the identity). 
It turns out that invariants of those actions on functions are given by the Reeb graphs of  
functions equipped with various collections of structures, such as a measure on the graph, 
homomorphisms of (local) homology groups of surfaces to that graph, a choice of a pants
 decomposition, and the flux across certain cycles as we describe in the corresponding sections.
Also the corresponding measures on Reeb graphs are not arbitrary but satisfy certain 
constraints in terms of asymptotic expansions at all three-valent vertices of the graph. 
To pass from classification of functions to classification of coadjoint orbits one needs 
to supplement the above data by the equality of appropriately defined circulation functions. 
\tikzstyle{vertex} = [coordinate]

 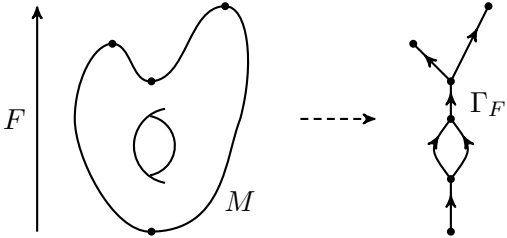
\begin{figure}[b]
\centerline{
\begin{tikzpicture}[thick]
\draw (0,1) .. controls (0,1.5) and (0.25,2) .. (0.5,2)
(0.5,2) ..controls (0.75,2) and (0.75,1.5) .. (1,1.5)
(2,2.5) ..controls (1.5,2.5) and (1.5,1.5) .. (1,1.5)
(2.2,1) .. controls (2.4,1.75) and (2.3,2.5) .. (2,2.5)
(1,-0.5) ..controls (0.5,-0.5) and (0,0.5) .. (0,1)
(1,-0.5) ..controls (2,-0.5) and (2,0.5) .. (2.2,1);
    \draw   (1.2,0.15) arc (260:100:0.5cm);
    \draw   (1,0.25) arc (-80:80:0.4cm);
\node [vertex] at (5,-0.5) (nodeA) {};
\node [vertex] at (5,0.2) (nodeB) {};
    \draw  [->-] (nodeA) -- (nodeB);
\node [vertex] at (5,1) (nodeC) {};
    \draw  [->-] (nodeB) .. controls +(-0.3,+0.4) .. (nodeC);
        \draw  [->-] (nodeB) .. controls +(0.3,+0.4) .. (nodeC);
            \node at (5.5,1.2) (nodeZ) {$\Gamma_F$};
\node [vertex] at (5,1.5) (nodeD) {};
    \draw  [->-] (nodeC) -- (nodeD);
\node [vertex] at (4.5,2) (nodeE) {};
    \draw  [->-] (nodeD) -- (nodeE);
\node [vertex] at (5.5,2.5) (nodeF) {};
    \draw  [->-] (nodeD) -- (nodeF);
\fill (nodeA) circle [radius=1.5pt];
\fill (nodeB) circle [radius=1.5pt];
\fill (nodeC) circle [radius=1.5pt];
\fill (nodeD) circle [radius=1.5pt];
\fill (nodeE) circle [radius=1.5pt];
\fill (nodeF) circle [radius=1.5pt];
\node [vertex] at (1.02,-0.5) (nodeAi) {};
\node [vertex] at (1.02,1.5) (nodeDi) {};
\node [vertex] at (0.5,2) (nodeEi) {};
\node [vertex] at (2,2.5) (nodeFi) {};
\fill (nodeAi) circle [radius=1.5pt];
\fill (nodeDi) circle [radius=1.5pt];
\fill (nodeEi) circle [radius=1.5pt];
\fill (nodeFi) circle [radius=1.5pt];
    \draw  [->] (-0.5,-0.5) -- (-0.5,2.5);
      \draw  [->, densely dashed] (3,1) -- (4,1);
    \node at (-0.8,1) (nodeA) {$F$};
        \node at (2.2,-0.1) (nodeA) {$M$};
\end{tikzpicture}
}
\caption{Reeb graph for a height function with two maxima on a torus.}\label{torusInt}
\end{figure}
\begin{example}\label{ex:intro}
{\rm
The following example outlines the basic constructions below.
The graph $\Gamma_F$, called the Reeb graph,  
is the set of connected components of the levels of a height function $F$
 on a  surface $M$, see Figure \ref{torusInt}. 
 Critical points of $F$ correspond to the vertices 
 of the graph $\Gamma_F$.
 This graph comes with a natural parametrization by the values of $F$. 
 For a symplectic surface $M$ its area form $\omega$ induces a measure $\mu$ on the graph, 
 which satisfies certain properties. For each edge $e\in \Gamma_F$ of the graph 
 $\Gamma_F$ one can consider the preimage $M_e\subset M$ bounded by the corresponding critical levels of $F$. Then  infinitely many moments   
 $$
 I_{\ell,e}(F):=\int_{M_e} \!\! F^\ell\,\omega, ~\ell=0,1,2,...
 $$
 of the function $F$  over each $M_e$  (or, equivalently, the moments of the induced function on each edge the graph) are invariants of the $\SDiff(M)$-action, i.e., the action on  the function $F$
  by symplectomorphisms of $M$. For the action of the group $\SDiff_0(M)$ consisting of symplectomorphisms in the connected component of the identity, 
  one encounters additional discrete invariants related to pants decompositions 
  and possible projections of the surface to the graph.
  For the group of Hamiltonian diffeomorphisms the above set of orbit invariants is supplemented 
  by fluxes of diffeomorphisms across certain cycles on the surface $M$.

In order to classify coadjoint orbits of the symplectomorphism group we
introduce  a notion of an anti-derivative, or circulation function,
for a Reeb graph. It turns out that such anti-derivatives
form a finite-dimensional space of dimension equal to the first Betti
number of the graph. Therefore the space of coadjoint orbits of the
symplectomorphism group of a surface is a bundle over the space of
fluid vorticities, where fiber coordinates can be thought of as
circulations, see details in Section \ref{sect:circ}.
}
\end{example}
 
\begin{table*}
\footnotesize
\begin{center}
\begin{tabular}{c|c|c|c}
classification    &  group action & invariants of the action &  see section \\
problem for & $G=$ & structure &  and theorem  \\
\hline
\hline
   &   &  &  \\
  \parbox{0.18\linewidth}{\centering Morse function $F$ on $M$} &     \parbox{0.15\linewidth}{\centering $\Diffeo(M)$, diffeomorphisms  of $M$}  &    \parbox{0.33\linewidth}{\centering $\Gamma_F$ -- Reeb graph} & \parbox{0.15\linewidth}{\centering Section \ref{MRG} Definition \ref{def:reeb}}\\
   &     &  &  \\
\hline
   &   &  &  \\
  \parbox{0.18\linewidth}{\centering Morse fibration $\pazocal F$ on $M$}  & \parbox{0.15\linewidth}{\centering $\SDiff(M),$ symplectomorphisms of~$M$}  & \parbox{0.33\linewidth}{\centering $\Gamma_{\pazocal F}$ -- weighted  graph: \\  graph + measure of edges \\ + expansions at vertices} & 
   \parbox{0.15\linewidth}{\centering Section \ref{sdiff-function-classif} Definition \ref{def:weighted} Theorem \ref{thmDTM} }\\
  &   &  &  \\
\hline
   &   &  &  \\
   \parbox{0.18\linewidth}{\centering Morse function $F$}  &    \parbox{0.15\linewidth}{\centering $\SDiff(M)$ }  &  \parbox{0.33\linewidth}{\centering $\Gamma_F$ -- Reeb graph  \\ + measure $\mu$ on the graph  (satisfying conditions at vertices)}  & 
  \parbox{0.15\linewidth}{\centering Section \ref{sdiff-function-classif} Definition \ref{MRGDef} Theorem \ref{thm:sdiff-functions}}\\
      &   &  &  \\
\hline
   &   &  &  \\
   \parbox{0.18\linewidth}{\centering Morse function $F$ genus$(M)=0$}     &     \parbox{0.15\linewidth}{\centering $\SDiff_0(M)$  $=\SDiff(M)$ $=\Ham(M)$ }   &    \parbox{0.33\linewidth}{\centering$\Gamma_F +  \mu$,\\  Reeb graph and measure} & 
   \parbox{0.15\linewidth}{\centering Section \ref{sdiff-function-classif} Corollary \ref{cor:sdiff-sphere}}    \\
 &   &  &   \\
  ---------------------  &   ---------------------   &   ---------------------  &  --------------------- \\
   \parbox{0.18\linewidth}{\centering Morse function $F$ genus$(M)=1$}     &      \parbox{0.15\linewidth}{\centering $\SDiff_0(M)$, connected component of $id\in \SDiff(M)$}  &
                                 \parbox{0.33\linewidth}{\centering $\Gamma_F +  \mu$ \\ + freezing homomorphism  $\pi_* \colon \Hom_1(\T^2,\Z)\to \Hom_1(\Gamma_F,\Z)$} &
                                 \parbox{0.15\linewidth}{\centering   Section \ref{sect:genus-one} Theorem \ref{Sdiff0Torus}} \\
 &   &  &  
 \\
  ---------------------  &   ---------------------   &   ---------------------  &  --------------------- \\
   \parbox{0.18\linewidth}{\centering Morse function $F$ genus$(M)\ge 2$ }    &  \parbox{0.15\linewidth}{\centering$\SDiff_0(M)$ } &
                     \parbox{0.33\linewidth}{\centering   $\Gamma_F +  \mu$  + higher freezing:  \\ 1) fixed pants decomposition \\ 2) $\Z^k_2$-valued half-twist invariant} &   \parbox{0.15\linewidth}{\centering Section \ref{sect:high-genus} Theorem \ref{Sdiff0action}}  \\
&       &   & 
  \\
\hline
   &   &  &  \\
  \parbox{0.18\linewidth}{\centering Morse function $F$  or  coadjoint \\ orbit of $F$,  genus$(M)=\varkappa$}  &   \parbox{0.15\linewidth}{\centering ${\rm Ham}(M)$}  &  \parbox{0.33\linewidth}{\centering $\Gamma_F +  \mu$ \\ + (higher) freezing \\ + flux on $\varkappa$ cycles}  &  \parbox{0.15\linewidth}{\centering Section \ref{HamSection} Theorem \ref{HamThm} } \\
   &      &     & 
   \\
\hline
   &   &  &  \\
  \parbox{0.18\linewidth}{\centering Morse coadjoint orbit of $[\alpha]\in \Omega^1/\diff\Omega^0$ }     &    \parbox{0.15\linewidth}{\centering $\SDiff(M)$ }    &     \parbox{0.33\linewidth}{\centering  $\Gamma_F +  \mu$ for function $F=\diff \alpha/\omega$\\ + circulation function $ \circulation=\int\alpha\in \Hom_1( \Gamma_F,\R)$ } &   
  \parbox{0.15\linewidth}{\centering Section \ref{sect:coadj-sdiff}  Theorem  \ref{thm:sdiffM}}  \\
&  &     &  \\
\hline
   &   &  &  \\
  \parbox{0.18\linewidth}{\centering Morse coadjoint orbit of $[\alpha]\in \Omega^1/\diff\Omega^0$ }     &    \parbox{0.15\linewidth}{\centering $\SDiff_0(M)$ }    &         \parbox{0.33\linewidth}{\centering $\Gamma_F +  \mu$ for  $F=\diff\alpha/\omega$ \\ + circulation function $\circulation$ \\ + (higher) freezing } & 
 \parbox{0.15\linewidth}{\centering Section \ref{sect:coadj-sdiff} Theorem  \ref{thm:sdiff0M} }    \\
&  &    &  \\
\hline
\\
\end{tabular}
\\
\end{center}
\caption{Sets of invariants in the classification problems studied below. }\label{table1}
\end{table*}

Table \ref{table1}  summarizes the main results of the paper presenting 
the set of invariants in each case studied below, as well as addressing 
the reader to the corresponding section and theorem for all the details. 
For a fixed closed 2D surface  $M$ equipped with an area form $\omega$ 
we classify the objects described in the first column of the table with respect to the group
listed in the second column. The invariants of the action are listed in the third column, and the 
corresponding reference to the classification theorem can be found in the last column.

In Section \ref{sect:euler} we describe motivation for this type of  classification problems coming 
from fluid dynamics. The case of manifolds with boundary and an application 
to computations of momenta of enstrophy will be discussed in a separate publication. 
Note that  the results of the paper can also be used to describe the manifolds of steady flows 
of the Euler equation, cf. the Choffrut--{\v{S}}ver{\'a}k description of a transversal slice
to special coadjoint orbits for  symplectomorphisms of an annulus \cite{ChSv}.

This also can be used for the extension of the orbit method to infinite-dimensional groups 
of 2D diffeomorphisms. According to this method, adjacency of  coadjoint orbits of a group
or its central extension mimics families of appropriate representations of the corresponding group. 
This methods turned out to be effective for affine groups and the Virasoro-Bott group, so one may hope to apply it to 2D diffeomorphisms and current groups as well. 

Finally, note that all objects in the present paper are infinitely smooth.  We remark on the case of finite smoothness at the end of the paper.

\begin{remark}
{\rm 
It is interesting to compare the description of $\SDiff(M)$-orbits for a surface $M$ with the classification of coadjoint orbits of the group $\Diffeo(S^1)$ of circle diffeomorphisms~\cite{Kir1, KhM}.
Its Lie algebra is ${\vect}(S^1)$  and the (smooth) dual space ${\vect}^*(S^1)$ is identified 
with the space of quadratic differentials
on the circle, $QD(S^1):=\{F(x)(dx)^2~|~F\in C^\infty(S^1, \R)\}$.
For a generic function $F$ changing sign on the circle, a complete set of invariants is given by the ``weights"
$$
I_{a_k}(F):=\int_{a_k}^{a_{k+1}}\!\!\sqrt{|F(x)|}\,dx
$$ 
of the quadratic differential between every two consecutive  zeros ${a_k}<{a_{k+1}}$ of $F(x)$ 
on the circle $S^1$. These orbits are of finite codimension equal to the number of zeros.
In a family of functions, where two new zeros, say $a'_k$ and $a''_k$, appear between original
zeros ${a_k}$ and ${a_{k+1}}$: ${a_k}<a'_k<a''_k <a_{k+1}$, one gains two extra Casimir functions, $I_{a'_k}$ and $I_{a''_k}$, and hence the codimension of the orbit jumps up by 2.

Similarly, for functions or coadjoint orbits of symplectomorphisms on
a 2D surface, the appearance
of a new pair of critical points, say, a saddle and a local maximum
for a function, leads
to splitting of one edge in two and, in addition to that, to
the appearance of a new edge in the corresponding Reeb graph, and
hence to two new families of Casimirs related to those extra edges, as
in Example \ref{ex:intro}.
}
\end{remark}


\bigskip

{\bf Acknowledgments.}
B.K. and A.I. were partially supported by  an NSERC research grant. B.K. is also grateful to the \'Ecole Polytechnique in Paris and the Erwin
Schr\"odinger Institute in Vienna for their hospitality and support. M.M. thanks Tudor Ratiu and the Centre Interfacultaire Bernoulli at EPFL  for their hospitality.


\bigskip

\section{The main setting and hydrodynamical motivation} \label{sect:reeb}

\subsection{Two classification problems}

Consider a closed 2D surface $M$ with a symplectic form $\omega$ on it. 
We are interested in  classifying generic coadjoint orbits of the groups  of symplectic and Hamiltonian diffeomorphisms of $M$. 

\smallskip

Namely, consider the following three groups: $\SDiff(M), \SDiff_0(M),$ and  ${\rm Ham}(M)$.
The group $\SDiff(M)$  consists of area-preserving (i.e. {\it symplectic}) diffeomorphisms of $M$.
Its connected component of the identity diffeomorphism is denoted by $\SDiff_0(M)$.
The group of {\it Hamiltonian} diffeomorphisms  ${\rm Ham}(M)$ consists of those symplectic 
diffeomorphisms of $M$ which can be connected with the identity by a Hamiltonian 
(non-autonomous) flow, i.e., by a flow of a vector field having a time-dependent Hamiltonian function. 

The first question we are going to address, and which we specify later, is

\begin{problem} \label{prob:orbits}
Classify generic coadjoint orbits of the  three groups $i)\, \SDiff(M),$ $ii) \,\SDiff_0(M),$ and  $iii)\, {\rm Ham}(M)$.
\end{problem}

It is closely related to the following problem. Let $F$ be a smooth Morse function on the surface $M$.

\begin{definition}
{\rm
A Morse function $F \colon M \to \R$ is called \textit{simple} if for each $a \in \R$, the corresponding level $F^{-1}(a)$ contains at most one critical point.
}
\end{definition}

\begin{problem}\label{prob:functions}
 Classify simple Morse functions on a symplectic surface $M$
with respect to the action of each of the three groups $i)\, \SDiff(M),$ $ii) \,\SDiff_0(M),$ and  $iii)\, {\rm Ham}(M)$.
\end{problem}


\subsection{Dual Lie algebras and coadjoint action} \label{sect:coadjoint}
In the general setting for an $n$-dimensional  manifold $M$ equipped 
with a volume form $\mu$ consider the  Lie group 
$G=\SDiff(M)$ of volume-preserving diffeomorphisms of $M$. 
The corresponding Lie algebra $\mathfrak g={\SVect}(M)$ 
consists of smooth divergence-free vector fields in $M$.  
(The same Lie algebra corresponds to the connected subgroup $\SDiff_0(M)\subset \SDiff(M)$.)
The natural smooth dual space for this Lie algebra is the space of  cosets $[\alpha]$
of smooth 1-forms $\alpha$ on $M$ modulo exact 1-forms, 
$\mathfrak g^*=\Omega^1(M)/\diff \Omega^0(M)$, see e.g. \cite{AK}. 
The groups $\SDiff(M)$ and $\SDiff_0(M)$ act on these 1-forms and their cosets by 
volume-preserving diffeomorphisms, i.e. by the corresponding change of coordinates.
This means that Problems 1(i) and 1(ii) reduce to description of invariants of 
cosets $[\alpha]$ of 1-forms 
on a surface $M$ with respect to area-preserving diffeomorphisms and those 
diffeomorphisms isotoped to the identity, respectively.

For  the Lie group of Hamiltonian diffeomorphisms ${\rm Ham}(M)$ its Lie algebra ${\ham}(M)$ 
consists of Hamiltonian vector fields on $M$. It can be identified with the Poisson algebra 
of  functions on $M$ modulo additive constants, i.e.  the Poisson algebra of Hamiltonians 
normalized by the condition of
zero mean: $\ham(M)=\{H\in C^\infty(M)~|~\int_MH\,\omega=0\}$.
For a surface $M$ the smooth dual space $\ham^*(M)$ can  be viewed as the space of 
exact 2-forms $\xi$ on $M$. Alternatively, it can also be identified with the space of  functions with zero mean: 
$\ham^*(M)=\{\xi=\rho\,\omega~|~\int_M\rho\,\omega=0\}$. Thus Problem~1(iii)
is equivalent to the classification problem of generic smooth functions with zero mean on a surface $M$ with respect to the action of Hamiltonian diffeomorphisms. 

To describe coadjoint invariants with respect to all these group actions we first
consider Problem \ref{prob:functions} of classification of generic smooth functions with respect to these actions.
\medskip


\subsection{Motivation: Euler equations, vorticity, Casimirs} \label{sect:euler}

The problem of classification of coadjoint orbits for volume-preserving diffeomorphisms is of particular importance in hydrodynamics. 
Let $M$ be an $n$-dimensional Riemannian manifold with a volume form $\mu$ 
and without boundary.
The  motion  of an inviscid incompressible fluid  on  $M$  is governed by  the 
classical Euler equation
\begin{equation}\label{idealEuler}
\partial_t v+(v, \nabla) v=-\nabla p
\end{equation}
describing an evolution of a  divergence-free velocity field $v$ of a fluid flow in $M$, 
${\rm div}\, v=0$. 
The pressure function $p$ entering the Euler equation is defined uniquely modulo 
an additive constant by this equation along with the divergence-free constraint 
on the velocity $v$. The term
$(v, \nabla) v$ stands for the Riemannian covariant derivative 
$\nabla_v v$ of the field $v$ along itself.

According to  Arnold's approach to the Euler equation \cite{Arn66}, the latter 
can be regarded as an 
equation of the geodesic flow on the group $\SDiff(M):=\{\phi\in \Diffeo~|~\phi^*\mu=\mu\}$ 
of volume-preserving diffeomorphisms of $M$ with respect to the right-invariant metric on the group 
given by the $L^2$-energy of the velocity field: $E(v)=\frac 12\int_M(v,v)\,\mu$. 
Then the Euler equation describes an evolution of the fluid velocity field $v(t)$, 
i.e. an evolution of a vector in the Lie algebra
${\SVect}(M)=\{v\in {\vect}(M) \mid L_v\mu=0\}$, 
tracing the geodesic on the group $\SDiff(M)$ defined by the initial condition $v(0)=v_0$.

\par

The geodesic point of view implies that the Euler equation has  the following Hamiltonian 
reformulation.  Consider the (regular) dual space $\mathfrak g^*={\SVect}^*(M)$ 
to the space $\mathfrak g={\SVect}(M)$ of divergence-free vector fields on $M$.
As mentioned above, this dual space $\mathfrak g^*$ has a natural  description 
as the  space of cosets  $\mathfrak g^*=\Omega^1(M) / \diff \Omega^0(M)$, 
where the coadjoint action of the group $\SDiff(M)$ on the dual 
 $\mathfrak g^*$ is given by the change of coordinates in (cosets of) 1-forms on $M$ 
 by means of volume-preserving diffeomorphisms.
 
Recall that the manifold $M$ is equipped with a Riemannian metric $(.,.)$, and it allows one to
identify the (smooth parts of) the Lie algebra and its dual by means of the so-called inertia operator:
given a vector field $v$ on $M$  one defines the 1-form $\alpha=v^\flat$ 
as the pointwise inner product with vectors of the velocity field $v$:
$v^\flat(W): = (v,W)$ for all $W\in T_xM$, see details in \cite{AK}. 
(Note that divergence-free fields $v$ correspond to co-closed 1-forms $v^\flat$.)
The Euler equation \eqref{idealEuler} rewritten on 1-forms is
$$
\partial_t \alpha+L_v \alpha=-dP\,
$$
for the 1-form $\alpha=v^\flat$ and an appropriate function $P$ on $M$.
In terms of the cosets of 1-forms $[\alpha]=\{\alpha+df\,|\,f\in C^\infty(M)\}\in \Omega^1(M) / \diff \Omega^0(M)$, the Euler equation looks as follows: 
\begin{equation}\label{1-forms}
\partial_t [\alpha]+L_v [\alpha]=0
\end{equation}
on the dual space $\mathfrak g^*$, where $L_v$ is the Lie derivative along the field $v$. 
The latter form has several important features.

First of all, the Euler equation \eqref{1-forms} on $\mathfrak g^*$ is a Hamiltonian equation. 
As the dual space to a Lie algebra, $\SVect^*(M)$ has the natural Lie-Poisson structure. The corresponding Hamiltonian operator is given by the Lie algebra coadjoint action ${\rm ad}^*_v$, which coincides with the Lie derivative in the case of the diffeomorphism group: ${\rm ad}^*_v=L_v$.
Its symplectic leaves are coadjoint orbits of the corresponding group $\SDiff(M)$.
The Euler equation is the Hamiltonian equation on the dual space $\mathfrak g^*$ with respect to this Lie-Poisson structure and with the Hamiltonian functional $H$ 
given by the fluid's kinetic energy,
$H([\alpha])=E(v)= \frac 12\int_M(v,v)\,\mu$ for $\alpha=v^\flat$, see details in \cite{Arn66, AK}.

Secondly, the equation form  \eqref{1-forms} shows that according to the Euler equation the 
coset of 1-forms $[\alpha]$ evolves by a volume-preserving change of coordinates, 
i.e. during the Euler evolution it remains in the same coadjoint orbit in $\mathfrak g^*$.
Introducing the {\it vorticity 2-form} $\xi:=dv^\flat$ as the differential of the 1-form 
$\alpha=v^\flat$ we note  that  the vorticity exact 2-form is well-defined for cosets $[\alpha]$: 
1-forms $\alpha$ in the same coset have equal vorticities $\xi=d\alpha$. 
The corresponding Euler equation assumes the vorticity (or Helmholtz) form
\begin{equation}\label{idealvorticity}
\partial_t \xi+L_v \xi=0\,,
\end{equation}
which means that the vorticity form is transported by (or ``frozen into") the fluid flow (Kelvin's theorem).

\begin{remark}
{\rm
The definition of vorticity $\xi$ as an exact 2-form $\xi=dv^\flat$ makes sense for a manifold $M$ 
of any dimension. In 3D the vorticity 2-form  is identified with the 
vorticity vector field $\hat\xi=\mathrm{curl}~v$  by means of the relation $i_{\hat\xi} \mu=\xi$ for the volume form $\mu$. 
In 2D  one  identifies the vorticity 2-form $\xi$ with a function $\hat\xi$ satisfying $\xi=\hat\xi\cdot\mu$.

In this paper we will be dealing with  two-dimensional oriented 
surfaces $M$ without boundary, while 
 the group $\SDiff(M)$ of volume-preserving diffeomorphisms of $M$ coincides with the group 
${\rm Symp}(M)$ of symplectomorphisms of $M$ with the area form $\mu=\omega$ given by the symplectic structure.
}
\end{remark}

\begin{remark}
{\rm
The fact that the vorticity  $\xi$ is ``frozen into" the incompressible flow allows one to define Casimirs, i.e., first integrals of the  hydrodynamical Euler equation valid for any Riemannian metric on $M$. These Casimirs are invariants of the coadjoint action of the corresponding group
$\SDiff(M)$.

In 2D the Euler equation on $M$ has infinitely many {\it enstrophy invariants}
$$
I_\lambda(\xi):=\int_M \!\lambda(\xi)\,\omega\,,
$$
where 
$\lambda(\xi)$ is an arbitrary function of vorticity. In particular,  the enstrophy momenta
$I_n(\xi):=\int_M {\xi}^n\,\omega$ are invariants for any $n\in \mathbb N$.
These invariants are  fundamental in the study of hydrodynamical stability of 2D flows, 
and in particular, were  the basis for Arnold's stability criterion, see \cite{Arn66, AK, Shn}. 
In the energy-Casimir method one studies the second variation of the energy functional 
with an appropriately chosen combination of Casimirs. 
A description of orbits by means of Casimirs also allows one to
construct asymptotic solutions to the Navier--Stokes equation,
localized near a curve or a surface \cite{MSh}, as well as to obtain a
precise structure of the set of Euler steady solutions \cite{ChSv}.

However, the functionals $I_\lambda$ do not form a complete set of Casimirs 
in either of the groups $\SDiff(M)$, $\SDiff_0(M)$, or ${\rm Ham}(M)$, even for the case $M = S^2$.
In the present paper we give a complete description of these invariants, namely, 
a complete classification of generic  coadjoint orbits of those groups, and hence, 
in particular, of generic vorticity functions $\xi$. 
Roughly speaking, for the symplectomorphism groups one
needs to consider analogs of the functionals $I_\lambda$ associated with every edge of a special
graph related to the vorticity function, as well as  a collection of discrete invariants.
For the group of Hamiltonian diffeomorphisms all these are supplemented by the flux type 
functionals over certain cycles on $M$, as will be discussed below. 
}
\end{remark}


\bigskip

\section{Simple Morse functions on symplectic surfaces}\label{functionsSection}

\subsection{Measured Reeb graphs}\label{MRG}

Throughout the paper let $M$ be a connected oriented two-dimensional surface without 
boundary, and let $F \colon M \to \R$ be a Morse function on $M$. 
Consider the space $\Gamma_F$ of connected components of $F$-levels with 
the induced quotient topology. This space is a finite connected graph, whose vertices 
correspond to critical levels of $F$.

\begin{definition}\label{def:reeb}
{\rm
This graph $\Gamma_F$ is called the \textit{Reeb graph}\footnote{This graph is also called the Kronrod graph of a function, see~\cite{Arnold58}.} of the function $F$.
}
\end{definition}

The function $F$ on $M$ descends to a function $f$ on the Reeb graph $ \Gamma_F$. In what follows, by a Reeb graph we always mean a pair: a graph, and a function on it.  It is also convenient to assume that $\Gamma_F$ is oriented: edges are oriented in the direction of increasing $f$. 

 \begin{figure}[t]
\centerline{
\begin{tikzpicture}[thick]
    \draw (2,2) ellipse (1cm and 2cm);
    \draw   (2.05,1.05) arc (225:135:1.2cm);
    \draw   (2.05,1.05) arc (-45:45:1.2cm);
    \draw   (1.1,1.05) arc (-120:-60:0.95cm);
    \draw   (2.05,1.05) arc (-120:-60:0.85cm);
    \draw  [densely dashed] (2.05,1.05) arc (60:120:0.95cm);
    \draw  [densely dashed] (2.9,1.05) arc (60:120:0.85cm);
    \draw   (1.1,2.75) arc (-120:-60:0.95cm);
    \draw  (2.05,2.75) arc (-120:-60:0.85cm);
    \draw  [densely dashed] (2.05,2.75) arc (60:120:0.95cm);
    \draw  [densely dashed] (2.9,2.75) arc (60:120:0.85cm);
    \draw  [->] (0.7,1) -- (0.7,3);
    \node at (0.5,2) (nodeA) {$F$};
        \node at (0.9,3.9) (nodeG) {$M$};
    \node at (7.7,2) (nodeB) {$\Gamma_F$};
    \node [vertex] at (7,0) (nodeC) {};
    \node [vertex]  at (7,1.05) (nodeD) {};
    \node [vertex] at (7,2.75) (nodeE) {};
    \node [vertex]  at (7,4) (nodeF) {};
    \node  at (2.1,4) (nodeG) {};
    \node  at (7,4) (nodeFdouble) {};
    \node  at (7,0) (nodeCdouble) {};
    \node  at (7,1.05) (nodeDdouble) {};
    \node at (7,2.75) (nodeEdouble) {};
    \node  at (2.1,0) (nodeH) {};
    \node  at (2.9,1.05) (nodeI) {};
    \node  at (2.9,2.75) (nodeJ) {};
    \draw  [->-] (nodeC) -- (nodeD);
    \fill (nodeC) circle [radius=1.5pt];
    \fill (nodeD) circle [radius=1.5pt];
    \fill (nodeE) circle [radius=1.5pt];
    \fill (nodeF) circle [radius=1.5pt];
    \draw  [->-] (nodeD) arc (-45:44:1.2cm);
    \draw  [->-] (nodeD) arc (225:136:1.2cm);
    \draw  [->-] (nodeE) -- (nodeF);
    \draw  [dashed, ->] (nodeG) -- (nodeFdouble);
    \draw  [dashed, ->] (nodeH) -- (nodeCdouble);
    \draw  [dashed, ->] (nodeI) -- (nodeDdouble);
    \draw  [dashed, ->] (nodeJ) -- (nodeEdouble);
\end{tikzpicture}
}
\caption{Reeb graph for a height function on a torus.}\label{torus}
\end{figure}
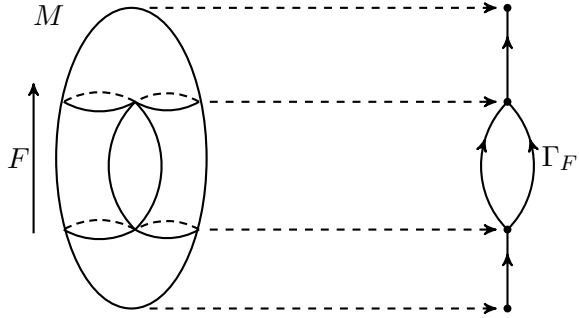

\begin{example}
{\rm
Figure \ref{torus} shows a torus and the Reeb graph of the height function on this torus.
}
\end{example}

We will confine ourselves to the study of {\it simple Morse functions} $F \colon M \to \R$,
i.e., those Morse functions whose critical values are all distinct. Such functions 
form an open dense set in $C^2$-topology among all smooth functions on $M$.

\begin{proposition}
Let $M$ be a closed connected 2D surface, and let  $F \colon M \to \R$  be a simple Morse function. Let also $\pi \colon M \to \Gamma_F$ be the natural projection.
Then:
\begin{longenum}
\item All vertices of $\Gamma_F$ are either $1$-valent or $3$-valent. 
\item  If $v$ is a $1$-valent vertex of $\Gamma_F$, then $\pi^{-1}(v)$ is a single point; this is a point of local minimum or local maximum of the function $F$. 
\item If $v$ is a $3$-valent vertex of $\Gamma_F$, then $\pi^{-1}(v)$ is a figure eight; the self-intersection point of this figure eight is a saddle critical point of $F$.
\item If $x$ is an interior point of some edge $e \subset \Gamma_F$, then $\pi^{-1}(x)$ is a circle.
\item For each $3$-valent vertex of $\Gamma_F$, there are either two incoming and one outgoing edge, or two outgoing and one incoming edge (see Figure \ref{trb}).
\item The first Betti number of the graph $\Gamma_F$ is equal to the genus of $M$.
\end{longenum}

\end{proposition}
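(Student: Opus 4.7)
The plan is to analyze $\Gamma_F$ locally near each value of $f$ by combining the Morse lemma at critical points with standard topological facts about level sets of smooth functions on compact surfaces. Throughout, I use that simplicity of $F$ guarantees that each critical level of $F$ contains exactly one critical point, so vertices of $\Gamma_F$ are in bijection with critical points.

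Parts (iv), (ii), (iii), and (i) follow from local normal forms. For a regular value $a$, the preimage $F^{-1}(a)$ is a compact smooth $1$-manifold without boundary, hence a disjoint union of circles, each representing an interior point of an edge; this is (iv). At a critical point of index $0$ or $2$ the Morse lemma gives $F=\mathrm{const}\pm(x^2+y^2)$, so on one side the critical level component is a single point and nearby levels are a single circle, while on the other side nothing nearby exists; this gives (ii) and shows the corresponding vertex is $1$-valent. At a critical point of index $1$ the Morse lemma gives $F=\mathrm{const}+xy$, and the four local branches of the critical level pair smoothly through the saddle into two transverse arcs; since by simplicity these arcs meet no other critical points and $M$ is compact, each closes into a loop in the critical level, and the resulting pair of loops intersects only at the saddle (regular level sets are $1$-manifolds and cannot self-intersect), giving (iii). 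Part (i) is then immediate: each vertex corresponds to a unique critical point of index $0$, $1$, or $2$, yielding valence $1$ or $3$.

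For (v), fix a $3$-valent vertex $v$ with saddle value $c$, and let $N$ be the connected component of $F^{-1}([c-\eps,c+\eps])$ containing the saddle, with $\eps$ small enough that $N$ contains no other critical point. The gradient flow of $F$ retracts $N$ onto the figure eight $\Sigma=\pi^{-1}(v)$, so $N$ is a compact oriented surface with $\chi(N)=\chi(\Sigma)=-1$. The classification of such surfaces gives $2g(N)+|\partial N|=3$, so $N$ is either a pair of pants or a once-punctured torus. The local Morse model $F=\mathrm{const}+xy$ at the saddle shows that in each of the four quadrants there is an arc of $\partial N$, with two arcs lying in $F^{-1}(c-\eps)$ and two in $F^{-1}(c+\eps)$. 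Hence $\partial N$ meets both level sets and therefore has at least two components, ruling out the once-punctured torus. So $N$ is a pair of pants, its three boundary circles each lie entirely in one level set, and they correspond bijectively to the three edges of $\Gamma_F$ incident to $v$; a $3{+}0$ sign distribution is impossible for the same reason as above, so the three circles split as $2{+}1$, proving (v). I expect the main technical point to be ruling out the once-punctured torus alternative and verifying the sign split; both are handled by the local Morse model combined with the Euler-characteristic classification.

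Finally, (vi) is combinatorial. Let $V_1$ and $V_3$ be the numbers of $1$-valent and $3$-valent vertices of $\Gamma_F$, and let $E$ be the number of edges. The handshake lemma gives $2E=V_1+3V_3$, so
\[
\chi(\Gamma_F)=V_1+V_3-E=\frac{V_1-V_3}{2}.
\]
By the preceding parts, $V_1$ and $V_3$ count the extrema and saddles of $F$, so the Morse formula gives $V_1-V_3=\chi(M)=2-2g$. Since $\Gamma_F$ is connected (as $M$ is), $b_1(\Gamma_F)=1-\chi(\Gamma_F)=g$, as claimed.
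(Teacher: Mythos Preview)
Your proof is correct and is precisely the detailed unpacking of what the paper dismisses in one line as ``standard Morse theory considerations.'' The paper gives no further argument, so your approach---local Morse normal forms for (i)--(iv), the Euler-characteristic classification of $N$ for (v), and the handshake/Morse-index count for (vi)---is exactly the expected expansion.
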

 \begin{proof}
The proof follows from standard Morse theory considerations.
 \end{proof}
 
  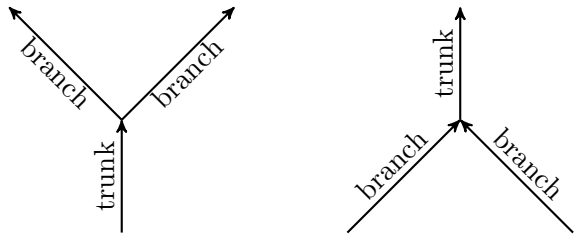
\begin{figure}[t]
\centerline{
\begin{tikzpicture}[thick]
\draw  [->] (2,0) -- (2,1.5);
\draw  [->] (2,1.5) -- (3.5,3);
\draw  [->] (2,1.5) -- (0.5,3);
\draw [decoration={text along path,
    text={trunk},text align={center},  raise = 0.1 cm},decorate]  (2,0) -- (2,1.5);
    \draw [decoration={text along path,
    text={branch},text align={center},  raise = -0.3 cm},decorate] (2,1.5) -- (3.5,3);
        \draw [decoration={text along path,
    text={branch},text align={center}, raise = -0.3cm},decorate]   (0.5,3) -- (2,1.5);
    \draw  [->] (5,0) -- (6.5,1.5);
       \draw  [->] (8,0) -- (6.5,1.5); 
              \draw  [->] (6.5,1.5) -- (6.5,3); 
              \draw [decoration={text along path,
    text={trunk},text align={center},  raise = 0.1 cm},decorate] (6.5,1.5) -- (6.5,3);
    \draw [decoration={text along path,
    text={branch},text align={center},  raise = 0.1 cm},decorate]  (5,0) -- (6.5,1.5);
        \draw [decoration={text along path,
    text={branch},text align={center}, raise = 0.1cm},decorate]    (6.5,1.5) -- (8,0); 
\end{tikzpicture}
}
\caption{Trunk and branches.}\label{trb}
\end{figure}
 
Assume that $e_0, e_1$, and $e_2$ are three edges of  $\Gamma_F$ which meet at a $3$-valent vertex $v$. Then $e_0$ is called the \textit{trunk} of $v$, and $ e_1, e_2$ are called \textit{branches} of $v$ if either $e_0$ is an outgoing edge for $v$, and $e_1, e_2$ are its incoming edges, or  $e_0$ is an incoming edge for $v$, and $e_1, e_2$ are its outgoing edges, see Figure \ref{trb}.

\medskip

Now, fix an area form $\omega$ on the surface $M$. Then the natural {\it projection map} $\pi:M\to \Gamma_F$ induces a measure $\mu$ on $\Gamma_F$. By definition, a set $X \subset \Gamma_F$ is measurable if and only if its preimage $\pi^{-1}(X) \subset M$ is measurable. For a measurable set $X \subset \Gamma_F$, its measure $\mu(X)$ equals the area of $\pi^{-1}(X)$.

\begin{proposition}\label{measureproperty} The measure $\mu$ has the following properties.
\begin{longenum}
\item Let $ [v,w]$ be an edge of $\Gamma_F$, and let $f(v) < f(w)$. 
Then there exists a function $\eta(z)$ smooth in the interval $(f(v),f(w))$ such that $\mu([v,x] ) = \eta(f(x))$ for any point $x \in (v,w)$, and $\eta'(z) \neq 0$ for any $z \in (f(v),f(w))$.
\item If, in addition, $v$ is a $1$-valent vertex, then $\eta(z)$ is smooth up to $f(v)$, and moreover, $\eta'(z) \neq 0$ at $z = f(v)$. Analogously, if $w$ is a $1$-valent vertex, then $\eta(z)$ is smooth up to  $f(w)$, and $\eta'(z) \neq 0$ at $z = f(w)$. 
\item Assume that $v$ is a $3$-valent vertex of $\Gamma_F$. Without loss of generality assume that $f(v)=0$ (if not, we replace $f$ by $\tilde f(x):=f(x)-f(v)$).
Let $e_0$ be the trunk of $v$, and let $e_1, e_2$ be the branches of $v$.
 Then there exist functions $ \psi, \eta_0, \eta_1,\eta_2$ of one variable, smooth in the neighborhood of the origin $0\in \R$ and such that for any point $x \in e_i$ sufficiently close to $v$, we have
\begin{align}\label{saddleAsymptotic}
\mu([v,x])&= \,\eps_i\psi(  f(x) )\ln |   f (x)| + \eta_i(   f(x) ),
\end{align}
where $\eps_0 = 2$, $\eps_1 = \eps_2 =  -1$,  $\psi(0) = 0$, $\psi'(0) \neq 0$,
and $\eta_0 + \eta_1 + \eta_2 =~0$.
\end{longenum}
 \end{proposition}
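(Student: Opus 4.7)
The plan is to handle the three claims via Morse-theoretic normal forms. Claim (i) is the easiest: on the open edge $(v,w)$ the function $F$ has no critical values, so $\pi$ restricts to a smooth $S^1$-bundle. Trivializing locally with coordinates $(u,\theta)$, $u = F$, and writing $\omega = h(u,\theta)\,du\wedge d\theta$ with $h$ smooth and positive, Fubini yields
\[
\mu([v,x]) = \int_{f(v)}^{f(x)} L(u)\,du, \qquad L(u) := \int_{S^1} h(u,\theta)\,d\theta > 0,
\]
so $\eta(z) := \mu([v,x])$ is smooth on $(f(v), f(w))$ with $\eta'(z) = L(z) \neq 0$.

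For (ii), I use the Morse lemma to pick coordinates $(x_1,x_2)$ centered at the extremum with $F - f(v) = \pm(x_1^2 + x_2^2)$ and write $\omega = g\,dx_1\wedge dx_2$ with $g(0,0) > 0$. Passing to polar coordinates and substituting $u = r^2$ expresses $\mu([v,x])$ as $\tfrac12 \int_0^a \!\int_0^{2\pi} g(\sqrt{u}\cos\theta,\sqrt{u}\sin\theta)\,d\theta\,du$ (with $a = |f(x)-f(v)|$); only even powers of $\sqrt{u}$ survive the angular integration of the Taylor expansion of $g$, making the inner integral smooth in $u$. Integration then gives $\eta$ smooth up to $z = f(v)$ with $\eta'(f(v)) = \pm\pi g(0,0) \neq 0$.

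The main work is part (iii). Morse-normalize at the saddle to get $F = x_1 x_2$ in a square $V = [-\delta,\delta]^2$, write $\omega = g\,dx_1\wedge dx_2$, and Taylor-expand $g = \sum a_{nm} x_1^n x_2^m$. The three edges at $v$ correspond, via the global connectivity of the figure-eight, to pairs of quadrants: the trunk occupies two opposite quadrants where $F$ has the trunk-side sign, while each branch occupies a single quadrant of the opposite sign. I split $\mu([v,x])$ into a Morse contribution (inside $V$) and a far contribution (outside $V$); the latter is smooth in $f(x)$ by the argument of (i). For the Morse part in one quadrant, I change variables to $(u, x_1)$ with $u = x_1 x_2$, which gives
\[
A^{(1)}(t) = \int_0^t\!\!\int_{u/\delta}^{\delta} \frac{g(x_1, u/x_1)}{x_1}\,dx_1\,du = \int_0^t \sum_{n,m\ge 0} a_{nm}\,u^m\!\!\int_{u/\delta}^{\delta} x_1^{n-m-1}\,dx_1\,du.
\]
The diagonal terms $n = m$ produce $\ln u$ via $\int x_1^{-1}\,dx_1$, while all off-diagonal terms are smooth in $u$. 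An integration-by-parts (using $\int_0^t b(u)\ln u\,du = B(t)\ln t + (\text{smooth})$, where $B = \int_0^{\cdot} b$) yields
\[
A^{(1)}(t) = -B(t)\ln t + (\text{smooth in } t), \qquad B(t) := \sum_{n\ge 0}\frac{a_{nn}}{n+1}\,t^{n+1}.
\]
The same computation in the other three quadrants replaces $g$ by $g(\pm x_1, \pm x_2)$, which modifies only off-diagonal Taylor coefficients; hence every quadrant produces the same $B$ up to $u\mapsto -u$. Setting $\psi(s) := -B(s)$ gives a single smooth function on a full neighborhood of $0$ with $\psi(0) = 0$ and $\psi'(0) = -g(0,0)\neq 0$, and it matches the stated formula with $\eps_0 = 2$ for the trunk (two quadrants) and $\eps_i = -1$ for each branch (one quadrant), thereby defining $\eta_i(f(x)) := \mu([v,x]) - \eps_i\psi(f(x))\ln|f(x)|$ as a smooth function on the natural half-neighborhood of $0$.

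Finally, I interpret the identity $\eta_0 + \eta_1 + \eta_2 = 0$ as an equality of germs on both sides of $0$ (equivalently, the one-sided Taylor series at $0$ sum to zero); Borel's theorem then allows adjusting any chosen extensions of the $\eta_i$ by a flat function so that the sum vanishes identically. The zero-jet case is immediate from $\mu([v,v]) = \text{area of the figure-eight} = 0$. For higher jets, the Morse-side contributions cancel by a parity count over the four quadrants combined with the $u \mapsto -u$ symmetry of $B$, while the far-side contributions cancel by a global geometric fact: as $t\to 0$, the trunk-side level circle outside $V$ degenerates onto both arcs of the figure-eight $L_1\setminus V$ and $L_2\setminus V$, whereas each branch-side level circle degenerates onto exactly one arc; at first order this yields the boundary-length identity $L_0^{\text{far}}(0^+) = L_1^{\text{far}}(0^-) + L_2^{\text{far}}(0^-)$, with analogues for higher derivatives. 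The main obstacle is precisely this order-by-order Taylor bookkeeping — keeping track of which pieces of the Morse expansion in each quadrant combine to produce each derivative, simultaneously with how the boundary level curves degenerate onto the figure-eight — and the cleanest organization is through the three-variable function $(t,s_1,s_2) \mapsto \mu_0(t) + \mu_1(s_1) + \mu_2(s_2)$ for $t>0$, $s_1,s_2<0$, whose log-singular part is given in closed form by $\psi$ and whose smooth remainder must encode the vanishing of all higher derivatives of the sum $\eta_0 + \eta_1 + \eta_2$.
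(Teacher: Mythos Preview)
Your approach is genuinely different from the paper's. The paper does almost no computation: it quotes the Morse--Darboux lemma of Colin de Verdi\`ere and Vey (Lemma~\ref{MDL}), which simultaneously normalizes \emph{both} $F$ and $\omega$ so that $F=\lambda(S)$ with $S=pq$ and $\omega=\diff p\wedge\diff q$, and then quotes the Dufour--Molino--Toulet lemma (Lemma~\ref{DMT1}), which already gives the expansion $\mu([v,x])=\eps_i s\ln|s|+\zeta_i(s)$ with $\zeta_0+\zeta_1+\zeta_2=0$ in the variable $s$. The whole proof of (iii) is then the one-line substitution $s=\lambda^{-1}(f)$. Your route instead keeps only the ordinary Morse lemma ($F=x_1x_2$, $\omega=g\,\diff x_1\wedge\diff x_2$ with $g$ arbitrary positive) and attempts to extract the log term and the cancellation by a direct Taylor computation --- in effect reproving the DMT lemma by hand. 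Parts (i) and (ii) are fine in your version (the parity argument in polar coordinates is correct and is the standard substitute for Morse--Darboux at an extremum).

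The gap is in (iii), and it is exactly the content the paper outsources to Lemmas~\ref{MDL} and~\ref{DMT1}. First, your $B(t)=\sum a_{nn}t^{n+1}/(n+1)$ is defined as a \emph{formal} series; for general smooth $g$ there is no reason it converges, and you never explain why there exists a genuine smooth $\psi$ for which $\mu([v,x])-\eps_i\psi(f)\ln|f|$ is smooth up to $f=0$ (not merely has a formal Taylor expansion). The term-by-term argument gives the correct asymptotic jet, but smoothness of the remainder requires controlling the Taylor remainder of $g$ uniformly in the singular integral $\int x_1^{-1}\,\diff x_1$, which you do not do; this is precisely why the literature proves Morse--Darboux first (it reduces to $g\equiv 1$, where your series is the single term $a_{00}$). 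Second, the ``parity count'' for the Morse-side cancellation and the ``analogues for higher derivatives'' on the far side are asserted, not verified. Your far-side sketch is in fact salvageable: using $(F,\theta)$ coordinates outside the box one gets smooth period-type functions $L_1,L_2$ defined across $F=0$, and then $\Phi_0^{(k)}(0^+)+\Phi_1^{(k)}(0^-)+\Phi_2^{(k)}(0^-)=0$ follows immediately. But the Morse-side cancellation still needs the diagonal/off-diagonal bookkeeping to be carried out, and until the smoothness issue above is resolved you cannot even speak of the one-sided jets of $\eta_i$. The cleanest fix is either to invoke Lemmas~\ref{MDL}--\ref{DMT1} as the paper does, or to replace your formal Taylor expansion by the iterated Hadamard decomposition $g=\sum_{k\le N}(x_1x_2)^k\bigl(a_k+x_1p_k(x_1)+x_2q_k(x_2)\bigr)+(x_1x_2)^{N+1}R_N$, which makes all your manipulations finite and yields both smoothness and the cancellation rigorously.
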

 
For the proof, we need two preliminary lemmas. The first of them is known as the Morse-Darboux lemma. This lemma is a particular case of {Le lemme de Morse isochore} due to Colin de Verdi{\`e}re and Vey \cite{CDV}.

\begin{lemma}\label{MDL}
Assume that $M$ is a two-dimensional manifold with an area form $\omega$, and let $F \colon M \to \R$ be a smooth function. Let also $O$ be a Morse critical point of $F$. Then there exists a chart $(p,q)$ centered at $O$ in $M$ such that  $\omega = \diff p \wedge \diff q$, and $F = \lambda \circ S $ where $ S  = p^2 + q^2$ or $ S = pq$. The function $\lambda$ of one variable is smooth in the neighborhood of the origin $0\in\R$, and $\lambda'(0) \neq 0$.
\end{lemma}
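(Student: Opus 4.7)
My approach is to reduce $F$ to its Morse quadratic normal form first, and then adjust the area form to the Darboux-standard one by a further change of coordinates that respects the level sets of $F$ up to reparametrization. First, I would invoke the classical Morse lemma to obtain local coordinates $(u,v)$ around $O$ in which $F(u,v) = S_0(u,v)$, where $S_0 = u^2+v^2$ in the elliptic case and $S_0 = uv$ in the hyperbolic case (the negative-definite case $S_0 = -(u^2+v^2)$ is absorbed into the composition with $\lambda$). In these coordinates the area form becomes $\omega = g(u,v)\,du\wedge dv$ for some smooth germ $g$ with $g(0)\neq 0$, and after reflecting one coordinate if needed we may assume $g(0)>0$.

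The remaining task is to produce a local diffeomorphism $\Phi\colon(u,v)\mapsto(p,q)$ fixing the origin, together with a smooth germ $\mu$ at $0$ with $\mu'(0)\neq 0$, satisfying $\Phi^*(dp\wedge dq)=\omega$ and $S\circ\Phi = \mu(S_0)$, where $S(p,q)$ is $p^2+q^2$ (resp.\ $pq$); the lemma's conclusion then holds with $\lambda := \mu^{-1}$. I would construct $(p,q)$ via the action-angle procedure applied to the Hamiltonian $S_0$ relative to $\omega$. In the elliptic case the level sets $\{S_0 = c\}$ are closed curves for small $c>0$, and one defines the action
\[
I(c) := \frac{1}{2\pi}\int_{\{S_0\le c\}}\omega,
\]
which is smooth in $c$ across $c=0$ (integrating in polar coordinates, odd powers of $\sqrt{u^2+v^2}$ average out in $\theta$) with $I(0)=0$ and $I'(0)=g(0)/2>0$. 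A conjugate angle $\phi\in\R/2\pi\Z$ is then chosen so that $\omega=dI\wedge d\phi$ in the action-angle chart, and the coordinates $p:=\sqrt{2I}\,\cos\phi$, $q:=\sqrt{2I}\,\sin\phi$ satisfy $dp\wedge dq=\omega$ together with $p^2+q^2=2I(F)$, yielding the desired normal form with $\mu(c)=2I(c)$. The hyperbolic case is handled by the analogous action-angle construction in each of the four quadrants $uv\neq 0$, using non-compact orbits of the Hamiltonian vector field of $S_0$ and a transversal section.

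The main obstacle, and the technical heart of the lemma, is verifying smoothness of the resulting chart at $O$ itself, where action-angle variables are intrinsically singular in the manner of polar coordinates. In the elliptic case this is a direct computation in the Morse chart: the apparent singularity of $(\sqrt{2I},\phi)$ at the origin is cured because $\phi$ agrees with the standard polar angle $\theta$ to leading order, so $p=\sqrt{2I}\cos\phi=\sqrt{g(0)}\,u+O(u^2+v^2)$ and similarly $q=\sqrt{g(0)}\,v+O(u^2+v^2)$, giving a genuine smooth chart. In the hyperbolic case one must instead verify smoothness of $\mu$ and of the angle-like parameter across the singular level $\{S_0=0\}$ through the saddle, which requires a careful asymptotic analysis of the relevant period-type integral near the saddle; this is the content of the isochore Morse lemma of Colin de Verdi{\`e}re and Vey \cite{CDV} invoked in the statement.
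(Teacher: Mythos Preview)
The paper does not actually prove this lemma: it is stated as a known result and attributed to Colin de Verdi\`ere and Vey \cite{CDV} (\emph{le lemme de Morse isochore}), with no argument given. So there is no ``paper's own proof'' to compare against; the paper simply cites the reference.

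Your sketch is a reasonable outline of how the isochore Morse lemma is proved, and it correctly isolates the key point: reducing $F$ to a quadratic form via the ordinary Morse lemma, then straightening $\omega$ by a level-preserving (up to reparametrization) diffeomorphism, with the genuine difficulty being smoothness of the new chart at the critical point. Your elliptic argument via the action integral is essentially the standard one and is fine. For the hyperbolic case you are right that the analysis across the singular level $\{S_0=0\}$ is the crux, and you ultimately defer to \cite{CDV} yourself --- which is exactly what the paper does. So in the end your proposal and the paper agree: the lemma is quoted from Colin de Verdi\`ere--Vey rather than proved from scratch.
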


Note that since $\lambda'(0) \neq 0$, the function $S$ can be expressed in terms of $F$.
The latter allows one to descend $S$ to a locally defined function $s$ on the Reeb graph of $F$ near the corresponding vertex.

\par

The second lemma is due to Dufour, Molino, and Toulet \cite{DTM}.
\begin{lemma}\label{DMT1}
Let $F \colon M \to \R$ be a simple Morse function, and let $v$ be a $3$-valent vertex of $\Gamma_F$.
Let also $e_0$ be the trunk of $v$, and let $e_1, e_2$ be the branches of $v$. Then there exist functions $\zeta_0(z), \zeta_1(z), \zeta_2(z)$ of one real variable, smooth in the neighborhood of the origin, and such that for any point $x \in e_i$ sufficiently close to $v$, we have
\begin{align}\label{DMTFormulas}
\begin{aligned}
\mu([v,x])&= \eps_i s(x) \ln |  s(x) | + \zeta_i(  s(x) )
\end{aligned}
\end{align}
where $\eps_0 = -2\eps_1 = -2\eps_2 = \pm 2$,  $\zeta_0 + \zeta_1 + \zeta_2 = 0$, and 
the function $s(x)$,  well-defined on $\Gamma_F$ for $x$ sufficiently close to the vertex $v$, is obtained by descending the function $S$ from the preceding lemma.
\end{lemma}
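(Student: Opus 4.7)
The plan is to compute $\mu([v,x])$ by splitting it along the boundary of a Darboux box around the saddle, with the inside giving the logarithmic term and the outside giving a smooth contribution that cancels in the sum $\zeta_0+\zeta_1+\zeta_2$. Apply Lemma~\ref{MDL} to get coordinates $(p,q)$ near $O=\pi^{-1}(v)$ in which $\omega=dp\wedge dq$, $F=\lambda(S)$, $S=pq$, $\lambda'(0)\ne 0$; then $s$ is the descent of $S$ to $\Gamma_F$. Fix a small closed square $B=\{|p|\le a,\,|q|\le a\}$ inside this chart containing no other critical points of $F$. Locally the trunk $e_0$ corresponds to the union of the two opposite $(p,q)$-quadrants on one side of the saddle level (where $\sigma pq>0$ for some $\sigma\in\{\pm 1\}$, determined by the global gluing of these two-quadrant level components into a single circle), while $e_1,e_2$ correspond to the two single quadrants of opposite sign.

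Decompose $\mu([v,x])=A^{\mathrm{in}}_i(s)+A^{\mathrm{out}}_i(s)$ into parts inside and outside $B$. The in-box area of any single quadrant of $B$ with $|pq|\le|s|$ is computed directly:
$$
\int_0^a \min(a,|s|/p)\,dp=|s|(1+2\ln a)-|s|\ln|s|.
$$
The trunk's in-box preimage occupies two such opposite quadrants and each branch's exactly one, giving
$$
A^{\mathrm{in}}_0(s)=-2\sigma\,s\ln|s|+2\sigma(1+2\ln a)\,s,\qquad A^{\mathrm{in}}_{1,2}(s)=\sigma\,s\ln|s|-\sigma(1+2\ln a)\,s.
$$
This reads off $\eps_0=-2\sigma$, $\eps_1=\eps_2=\sigma$, matching $\eps_0=-2\eps_1=-2\eps_2=\pm 2$; note also that the polynomial parts already sum to zero.

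For the outside contribution, $F$ is regular on $M\setminus B$ in the relevant range, so $\pi^{-1}(v)\setminus B$ consists of two smooth arcs $\gamma_1,\gamma_2$, one per topological loop of the figure-eight at $v$. In a tubular neighborhood $U_k$ of $\gamma_k$ choose coordinates $(F,t)$ with $\omega=dF\wedge dt$ and $\gamma_k=\{F=0\}$; then the trunk region and the unique branch $e_k$ bordered by the loop containing $\gamma_k$ lie on opposite sides of $\gamma_k$ in $U_k$. Let $T_k(F)$ denote the length in $t$ of $\{F\}\cap U_k$ and set $G_k(s):=\int_0^{\lambda(s)}T_k(F)\,dF$, a smooth function of $s$ with $G_k(0)=0$. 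For $s$ on the appropriate side, with area-preserving sign conventions inherited from $\omega = dF\wedge dt$, extending by the same formulas yields
$$
A^{\mathrm{out}}_0(s)=G_1(s)+G_2(s),\qquad A^{\mathrm{out}}_k(s)=-G_k(s),\ k=1,2.
$$

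Finally, set $\zeta_i(s):=A^{\mathrm{in}}_i(s)-\eps_i s\ln|s|+A^{\mathrm{out}}_i(s)$ with the above smooth extensions. Then $\zeta_i$ are smooth in a neighborhood of $0$, $\zeta_i(0)=0$, and the identity $(G_1+G_2)+(-G_1)+(-G_2)\equiv 0$ together with the vanishing sum of box polynomial parts gives $\zeta_0+\zeta_1+\zeta_2\equiv 0$. The principal obstacle is precisely this last identity: each $\zeta_i$ is intrinsically defined only on the one side of $0$ on which $e_i$ parametrizes $s$, and the sum relation is \emph{a priori} a nontrivial statement even as Taylor series. What rescues it is the canonical extension produced by the Hamilton-flow parametrization along each $\gamma_k$: with $\omega=dF\wedge dt$ the trunk- and branch-side strip areas across the same arc $\gamma_k$ are exact negatives of one another as smooth functions of $s$, and summing over $k=1,2$ gives the desired cancellation.
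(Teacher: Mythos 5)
The paper does not actually prove this lemma: it is quoted from Dufour--Molino--Toulet \cite{DTM} (with the subsequent remark clarifying that the relation $\zeta_0+\zeta_1+\zeta_2=0$ is there formulated for infinite jets, since each $\zeta_i$ is a priori defined only on one side of the origin). So there is no in-paper proof to compare against; your argument has to be judged on its own, and it holds up. The decomposition into the Darboux box and the two regular arcs $\gamma_1,\gamma_2$ of the figure eight is the right mechanism: the in-box computation $\int_0^a\min(a,|s|/p)\,dp=|s|(1+2\ln a)-|s|\ln|s|$ is correct, correctly yields $\eps_0=-2\sigma$, $\eps_1=\eps_2=\sigma$ with the linear parts summing to zero, and the out-of-box strips along each $\gamma_k$ contribute $+G_k$ to the trunk and $-G_k$ to the corresponding branch, so the sum telescopes. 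This simultaneously produces genuine smooth extensions of the $\zeta_i$ to a two-sided neighborhood of $0$ satisfying the identity exactly, which is slightly stronger than the jet-level statement in \cite{DTM}.

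The one step you use silently and should make explicit is why $T_k$ (hence $G_k$) is smooth \emph{across} the critical value, since for $c>0$ the arc $\{F=c\}\cap U_k\setminus B$ belongs to the trunk circle and for $c<0$ to a branch circle. This works because (a) away from the box the level sets of $F$ form a regular foliation, and (b) the endpoints of the truncated arcs on $\partial B$ depend smoothly on the level: with $B=\{|p|\le a,|q|\le a\}$ the arc near $\gamma_1$ has endpoints $(c/a,a)$ and $(-a,-c/a)$ for \emph{all} small $c$ of either sign, these sides of the square being transverse to the foliation there. Without this the definition of $T_k$ on a two-sided neighborhood of $0$ is exactly the gap one would worry about, and it is the only place where the choice of box enters; everything else (the sign bookkeeping $\eps_0=-2\eps_1=-2\eps_2=\pm2$, $\zeta_i(0)=0$, and the cancellation $(G_1+G_2)-G_1-G_2=0$) is routine once that is in place.
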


\begin{remark}
Note that the latter lemma is formulated in \cite{DTM} in terms of infinite jets of functions $\zeta_i$, as the corresponding expansion involves
the functions $\zeta_0$ and $\zeta_{1,2}$ on different sides of the origin. The relation $\zeta_0 + \zeta_1 + \zeta_2 = 0$ for jets provides the existence
of the corresponding smooth functions defined in the whole neighborhood of the origin. The same holds for the functions $\psi$ and $\eta_i$ in Proposition  \ref{measureproperty}.
\end{remark}

\begin{proof}[Proof of Proposition \ref{measureproperty}]
Let us prove statement (i). The preimage of the open edge $(v,w)$ under the projection $\pi$ is a cylinder $\mathrm{Cyl}$. For any $z \in (f(v), f(w))$, the set $\{ P \in \mathrm{Cyl} : F(P) = z\}$ is a periodic trajectory of the Hamiltonian vector field $X_F = \omega^{-1}\diff F$. 
Denote the period of this trajectory by $T(z)$. Then a standard argument shows that
\begin{align}\label{period-formula}
\begin{aligned}
\mu([v,x]) = \int_{f(v)}^{f(x)} \!T(z)\diff z.
\end{aligned}
\end{align}
Now  statement (i) follows from the fact that $T(z)$ is a smooth non-vanishing function of $z$.

 To prove statement (ii)  it suffices to consider the case of a $1$-valent vertex $v$; 
 the second case is analogous. For a $1$-valent vertex $v$ the preimage of the interval 
 $[v, w)$ under the projection $\pi$ is an open disk $D \subset M$. The only critical point of $F$ in the disk $D$ is $ O = \pi^{-1}(v)$; the point $O$ is a non-degenerate minimum point. By Lemma \ref{MDL}, there exists a Darboux chart $(p,q)$ centered at the point $O$ such that $F =  \lambda \circ S$ where $S = p^2 + q^2$. Let 
 $$
 D_x =\pi^{-1}([v,x)) \subset D. 
 $$ 
 In $(p,q)$ coordinates, the set $D_x$ is a closed disk radius $\sqrt{s(x)} $ center $(0,0)$. Therefore,
$$
\mu([v,x]) = \int_{D_x} \!\!\omega = \pi s(x) = \pi \lambda^{-1}(f(x)),
$$
which implies the statement.
\par
The third statement is proved by substituting $s = \lambda^{-1} \circ f$ into formula \eqref{DMTFormulas}.
\end{proof}

\begin{remark}
{\rm
The statement of Proposition \ref{measureproperty} 
can be understood within the framework of topology of integrable systems (cf. e.g. \cite{bolosh}). 
In the setting of integrable systems, the measure $\mu$ can be interpreted 
as the \textit{action variable} for the integrable Hamiltonian vector field $X_F$. Indeed, let $e$ be an edge of $\Gamma_F$, and let $\rm{Cyl} = \pi^{-1}(e)$. Choose a $1$-form $\alpha$ on $\rm{Cyl}$ such that $\diff \alpha = \omega$. Let $x_0$ and $x$ be interior points of $e$. Then, by the Stokes formula, we obtain
$$
\mu([x_0 , x]) = I(x) - I(x_0)
$$
where
$$
I(x) = \int\limits_{\mathclap{\pi^{-1}(x)}}\,\alpha.
$$
The latter expression, up to a factor $2\pi$,  is the Arnold--Mineur formula for the action (see~\cite{Arnold}).
}
\end{remark}

The above  properties of the measure $\mu$ on the graph $\Gamma_F$ 
make it natural to introduce the following definition of an abstract Reeb graph with measure.

\begin{definition}\label{MRGDef}
{\rm
A \textit{measured Reeb graph} $(\Gamma,  f, \mu)$ is an oriented connected graph  $\Gamma$ with a continuous function $f \colon \Gamma \to \R$ and a measure $\mu$ which satisfy the following properties.
\begin{longenum}
\item All vertices of $\Gamma$ are either $1$-valent or $3$-valent. 
\item For each $3$-valent vertex, there are either two incoming and one outgoing edge, or vice versa.
\item The function $f$ is strictly monotonous on each edge of $\Gamma$, and the edges of $\Gamma$ are oriented towards the direction of increasing $f$.
\item The measure $\mu$ satisfies the properties listed in Proposition \ref{measureproperty}.
\end{longenum}
Two measured Reeb graphs are \textit{isomorphic} if the graphs are homeomorphic as topological spaces, and the homeomorphism between them preserves the function and the measure.
Whenever there is no ambiguity, we denote the  measured Reeb graph  $(\Gamma,  f, \mu)$
by its graph notation $\Gamma$ only.
}
\end{definition}

\begin{definition}\label{def:compatible}
{\rm
A measured Reeb graph  $(\Gamma,  f, \mu)$ is \textit{compatible with} 
a symplectic surface $(M, \omega)$ if $\dim \Hom_1(\Gamma, \R)$ is equal to the genus 
of  $M$, and the volume of $\Gamma$ with respect to the measure $\mu$ 
is equal to the volume of $M$: $\int_\Gamma \mu=\int_M\omega$.
}
\end{definition}

Thus to each simple Morse function $F$ on a symplectic surface $(M, \omega)$, we associate a measured Reeb graph $(\Gamma_F,  f,  \mu)$ compatible with $M$. Clearly, this graph does not change, i.e. is invariant, under the action of $\SDiff(M)$ on simple Morse functions. In what follows, we show that this invariant is complete.


\medskip

\subsection{Classification of simple Morse functions under the  $\SDiff(M)$ action}
\label{sdiff-function-classif}

\begin{theorem}\label{thm1}\label{thm:sdiff-functions}
Let $M$ be a closed connected symplectic surface. Then there is a one-to-one correspondence between simple Morse functions on $M$, considered up to symplectomorphism, and (isomorphism classes of) measured Reeb graphs compatible with $M$. In other words, the following statements hold.
\begin{longenum}
\item Let $F,G \colon M \to \R$ be two simple Morse functions. 
Then the following two condition are equivalent:
\begin{longenum} \item there exists a symplectomorphism $\Phi \colon M \to M$ such that $\Phi_* F = G$;
\item the measured Reeb graphs $\Gamma_F$ and $\Gamma_G$ are isomorphic. \end{longenum}
 Moreover, any isomorphism $\phi \colon \Gamma_F \to \Gamma_G $ can be lifted to a symplectomorphism $\Phi \colon M \to M$ such that $\Phi_*F = G$.
\item For each measured Reeb graph $(\Gamma,  f,  \mu)$ compatible with $M$, there exists a simple Morse function $F \colon M \to \R$ such that its measured Reeb graph  $\Gamma_F$
coincides with $(\Gamma,  f,  \mu)$ .
\end{longenum}
\end{theorem}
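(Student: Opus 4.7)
The direction (a)~$\Rightarrow$~(b) of part (i) is immediate: a symplectomorphism $\Phi$ with $\Phi^*G=F$ maps connected components of levels to connected components of levels and, being area-preserving, induces a measure-preserving isomorphism $\Gamma_F\to\Gamma_G$. The substance is the converse together with the lifting statement, plus the realization in part (ii). My plan is to decompose $M$ along the critical levels of $F$ into vertex neighborhoods and edge cylinders, produce symplectic normal forms on each piece, and then glue. Concretely, given a measured Reeb graph isomorphism $\phi\colon\Gamma_F\to\Gamma_G$, the Morse-Darboux Lemma~\ref{MDL} gives Darboux charts around each critical point of $F$ and $G$ in which $F=\lambda^F_v\circ S$ and $G=\lambda^G_{\phi(v)}\circ S$ with $S=p^2+q^2$ or $S=pq$. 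The measure near $v$ recovers $\lambda_v$ completely: Proposition~\ref{measureproperty}(ii) gives $\lambda_v^{-1}(z)=\mu([v,f^{-1}(z)])/\pi$ at $1$-valent vertices, while Lemma~\ref{DMT1} determines $s=\lambda_v^{-1}\circ f$ through the asymptotic expansion of $\mu$ at $3$-valent vertices. Hence $\lambda^F_v$ and $\lambda^G_{\phi(v)}$ agree, and composing the two Darboux charts yields a symplectomorphism of neighborhoods of $\pi^{-1}(v)$ sending $F$ to $G$. Over each open edge $e$, the preimage $\pi^{-1}(e)$ is a symplectic cylinder with action-angle structure, action variable $s(x)=\mu([v,x])$ and period $T_e=d\mu/df$; matching these via $\phi$ produces a cylinder symplectomorphism intertwining $F$ and $G$, unique up to a level-dependent rotation $(s,\theta)\mapsto(s,\theta+c(s))$.

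The remaining task is to choose the angular twists on each edge so as to match the vertex symplectomorphisms along shared boundary circles. At a $1$-valent vertex any smooth $c(s)$ extends across the disk, so the twist is a free parameter at that end. The main obstacle lies at the $3$-valent vertices: the three edges adjacent to a saddle share the hyperbolic action-angle model, which constrains their twists up to one continuous parameter and finitely many discrete choices. I would pick a spanning tree of $\Gamma_F$, absorb all twist mismatches along tree edges using the free end at a leaf or at the free side of the next saddle, and close up along each non-tree edge by composing the partial symplectomorphism with a Hamiltonian isotopy supported in a tubular neighborhood of the corresponding cycle; the area-preservation of this closing isotopy is automatically satisfiable because $\phi$ already matches the measures on all edges, so no net flux obstruction can arise. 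Carefully tracking smoothness at the critical levels, using the explicit expansions of Lemma~\ref{DMT1}, is the most delicate part of the argument.

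For part (ii), I would first build an auxiliary symplectic surface $(\tilde M,\tilde\omega)$ carrying a simple Morse function $\tilde F$ with measured Reeb graph $(\Gamma,f,\mu)$, and then transfer the data to $M$ by Moser's theorem. The local models are those above: each $1$-valent vertex contributes a Darboux disk with $F=\lambda_v(p^2+q^2)$ where $\lambda_v$ is forced by $\mu$; each $3$-valent vertex contributes a neighborhood of $\{pq=0\}$ with $F=\lambda_v(pq)$, compatibility of the local data here being guaranteed precisely by the constraints on $\mu$ in Proposition~\ref{measureproperty}(iii) together with Lemma~\ref{DMT1}; each edge contributes a symplectic cylinder in action-angle coordinates $(\theta,s)$ with $\tilde\omega=d\theta\wedge ds$ and $s(x)=\mu([v,x])$. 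These pieces glue along non-critical level circles by essentially unique symplectomorphisms of thin annuli. The resulting closed oriented surface $\tilde M$ has genus equal to the first Betti number of $\Gamma$, which by compatibility equals the genus of $M$, and its total symplectic area equals $\int_\Gamma\mu=\int_M\omega$. Picking any diffeomorphism $\tilde M\to M$ and pulling back $\omega$ yields an area form on $\tilde M$ cohomologous to $\tilde\omega$ and of equal total area; Moser's theorem then produces an isotopy identifying the two area forms, and composing gives a symplectomorphism $\Psi\colon\tilde M\to(M,\omega)$. The function $F:=\tilde F\circ\Psi^{-1}$ is then a simple Morse function on $(M,\omega)$ whose measured Reeb graph is $(\Gamma,f,\mu)$.
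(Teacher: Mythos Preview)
Your approach is viable in outline but differs from the paper's and has one genuine soft spot. The paper does \emph{not} build the symplectomorphism by hand from vertex charts and edge cylinders. Instead, it observes (Proposition~\ref{fromMeasuredGraphToWeightedGraph}) that a measured Reeb graph determines a \emph{weighted} Reeb graph in the sense of Dufour--Molino--Toulet, then invokes their fibration classification (Theorem~\ref{thmDTM2}) to lift $\phi$ to a fiber-preserving symplectomorphism $\Phi$. Since $\Phi$ is fiberwise and symplectic, its induced map on edges is measure-preserving and hence equals $\phi$, so $\Phi$ automatically intertwines $F$ and $G$. For part~(ii) the paper likewise uses DTM's existence result to obtain a fibration with the right weighted graph, then lifts $f$ to $M$ and checks smoothness at hyperbolic fibers via a flat-function absorption lemma (Lemma~\ref{flatTerm}). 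This buys them a clean separation: DTM handles all the gluing, and the only new analytic content is the flat-term lemma.

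Your direct cut-and-paste strategy essentially re-proves the DTM theorem while simultaneously upgrading it from fibrations to functions. The weak point is your assertion that at a $3$-valent vertex the measure ``recovers $\lambda_v$ completely'' so that $\lambda^F_v=\lambda^G_{\phi(v)}$. This is false as stated: by Lemma~\ref{MDL2}(ii) the Darboux chart at a saddle is unique only up to functions flat in $S=pq$, so $\lambda_v$ is determined by the measure only modulo a flat function. Consequently, composing the two Darboux charts sends $F=\lambda^F_v(S)$ to $\lambda^F_v(S)$, not to $G=\lambda^G_{\phi(v)}(S)$, unless you first absorb the flat discrepancy. That absorption step---showing one can modify the chart by a flat correction to force $\lambda^F_v=\lambda^G_{\phi(v)}$ exactly---is precisely the content of the paper's Lemma~\ref{flatTerm}, and it is the analytic crux you allude to but do not supply. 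Once that lemma is in hand, your spanning-tree-and-twist gluing argument can be made rigorous, though it is more laborious than the paper's route through the DTM black box.
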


This theorem gives a complete classification of simple Morse functions on a closed symplectic
surface $M$ with respect to the  $\SDiff(M)$-action in terms of their measured Reeb graphs. 
Assume now that $M$ is a 2-dimensional sphere $S^2$. In this case,  the group $\SDiff(S^2)$ is connected, i.e. coincides with $\SDiff_0(S^2)$, and moreover all symplectomorphisms are Hamiltonian, i.e., 
$\SDiff(S^2)=\SDiff_0(S^2)={\rm Ham}(S^2)$. Note also that the Reeb graph of any Morse function 
on $S^2$ has no cycles, and hence it is a tree. In this case Theorem  \ref{thm1}  allows one to completely classify  simple Morse functions on $S^2$ with respect to actions of both Hamiltonian 
diffeomorphisms and symplectomorphisms.

\begin{corollary}\label{cor:sdiff-sphere}
Let M be a 2-dimensional sphere $S^2$. Then there is a one-to-one correspondence between simple Morse functions on M, considered  either up to symplectomorphisms or up to Hamiltonian diffeomorphisms, and measured Reeb graphs compatible with $S^2$, i.e., acyclic measured Reeb graphs of total measure equal to the symplectic area of $S^2$. 
\end{corollary}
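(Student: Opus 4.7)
The plan is to obtain the corollary as an immediate specialization of Theorem \ref{thm:sdiff-functions}, combined with the standard fact that on $S^2$ the three groups $\SDiff(S^2)$, $\SDiff_0(S^2)$, and $\Ham(S^2)$ all coincide. Once this coincidence is recorded, the three classification problems collapse into one, and the already-proved classification up to symplectomorphisms delivers the result.

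First I would recall why the three groups agree on $S^2$. Because $S^2$ is simply connected, a Moser-type argument (extending any isotopy of $\Diffeo_0(S^2)$ to a symplectic isotopy by correcting the area form in its cohomology class) shows that every symplectomorphism of $(S^2,\omega)$ is isotopic through symplectomorphisms to the identity, giving $\SDiff(S^2) = \SDiff_0(S^2)$. Moreover, the flux homomorphism $\Flux \colon \SDiff_0(M) \to \Hom^1(M,\R)/\Gamma$ has trivial target because $\Hom^1(S^2,\R) = 0$, so every symplectic isotopy on $S^2$ is automatically Hamiltonian; hence $\SDiff_0(S^2) = \Ham(S^2)$. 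Consequently, classifying simple Morse functions up to $\SDiff(S^2)$ is tautologically the same as classifying them up to $\Ham(S^2)$.

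Second, I would apply Theorem \ref{thm:sdiff-functions} with $M = S^2$. The theorem establishes a bijection between $\SDiff(M)$-equivalence classes of simple Morse functions on $M$ and isomorphism classes of measured Reeb graphs compatible with $M$. By the previous step, this bijection simultaneously classifies such functions modulo $\SDiff_0(S^2)$ and modulo $\Ham(S^2)$, so it yields the desired statement for all three group actions at once.

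Finally, I would unpack the compatibility condition of Definition \ref{def:compatible} when $M=S^2$. It has two parts: the first requires $\dim \Hom_1(\Gamma, \R) = \text{genus}(M) = 0$, which forces $\Gamma$ to have no cycles, i.e.\ to be a tree (equivalently, acyclic); the second requires $\int_\Gamma \mu = \int_M \omega$, which is exactly the equality of the total measure of the graph with the symplectic area of $S^2$. These are precisely the conditions stated in the corollary, so no further argument is needed. There is essentially no obstacle in this proof beyond carefully recording the coincidence of the three groups on $S^2$; the substantive work has already been done in Theorem \ref{thm:sdiff-functions}.
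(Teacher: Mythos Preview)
Your proposal is correct and follows essentially the same approach as the paper: the paragraph immediately preceding the corollary records the coincidence $\SDiff(S^2)=\SDiff_0(S^2)=\Ham(S^2)$ and observes that Reeb graphs on $S^2$ are trees, after which Theorem~\ref{thm:sdiff-functions} yields the result directly. Your added remarks on Moser's argument and the vanishing of $\Hom^1(S^2,\R)$ just spell out the reasons behind that coincidence more explicitly.
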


Before we prove Theorem \ref{thm1}, we need to discuss symplectic invariants of \textit {simple Morse fibrations}. Roughly speaking, a \textit{simple Morse fibration} is a fibration locally given by level sets of a simple Morse function. A precise definition is as follows.

\begin{definition}
{\rm
Let $(M, \omega)$ be a closed connected symplectic surface, and let $\Fibr$ be a (possibly singular) fibration on $M$.
Let also $\Gamma_{\Fibr}$ be the base of $\Fibr$, and let $\pi \colon M \to \Gamma_{\Fibr}$ be the projection map. Then $\Fibr$ is a called a \textit{simple Morse fibration} if for each $x \in \Gamma_{\Fibr}$ there exists its neighborhood $U(x)$ and a simple Morse function $F \colon \pi^{-1}(U(x)) \to \R$ such that for each $y \in U(x)$ the fiber $\pi^{-1}(y)$ is a connected component of a level set of $F$.
}
\end{definition}

Invariants of simple Morse fibrations under symplectic diffeomorphisms were described by Dufour, 
Molino, and Toulet \cite{DTM}. Let us recall the construction of these invariants. The first invariant 
is the Reeb graph, i.e. the base $\Gamma_{\Fibr}$ of the fibration $\Fibr$. Note that the Reeb 
graph associated with a fibration {\it does not} have a natural function on it and hence the graph
 is no longer parametrized and oriented. However the notions of a trunk and a branch for a given vertex still make sense.\par
 
Further, we can construct a measure on $\Gamma_{\Fibr}$ in the same way as in Section \ref{MRG}. On each of the edges of $\Gamma_{\Fibr}$, this measure is completely characterized by the total length of this edge. Apart from this, there are certain invariants associated to each of the $3$-valent vertices. To describe these invariants, we need the following stronger version of Lemma \ref{MDL}.

\begin{lemma}[Dufour, Molino, and Toulet \cite{DTM}]\label{MDL2}
Assume that $M$ is a closed connected symplectic surface, and let $F \colon M \to \R$ be a smooth function. Let also $O$ be a hyperbolic Morse critical point of the function $F$. Then the following statements hold.
\begin{longenum} 
\item There exists a chart $(p,q)$ centered at $O$ such that $\omega = \diff p \wedge \diff q$, and $F = \lambda \circ S $ where $S = pq$. The function $\lambda$ is smooth in a neighborhood of the origin, and $\lambda'(0) \neq 0$. Moreover, the chart $(p,q)$ can be chosen in such a way that the constant $\eps_0$ entering expansion  \eqref{DMTFormulas} is equal to $+2$.
\item If $(p',q')$ is another chart with the same properties, then $p'q' = pq$ modulo a function flat at the point $O$.
\end{longenum}
\end{lemma}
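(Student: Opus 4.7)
The plan is to obtain (i) by bootstrapping on Lemma \ref{MDL} and adjusting the sign of $\eps_0$ via a symplectic involution of the Darboux chart, and to obtain (ii) by comparing the asymptotic expansions of $\mu([v,x])$ furnished by Lemma \ref{DMT1} in the two charts.

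For part (i), Lemma \ref{MDL} already supplies a Darboux--Morse chart $(p,q)$ in which $\omega = \diff p \wedge \diff q$, $F = \lambda \circ S$, $S = pq$, and $\lambda'(0)\neq 0$. By Lemma \ref{DMT1}, along the trunk one has $\mu([v,x]) = \eps_0 s(x) \ln|s(x)| + \zeta_0(s(x))$ with $\eps_0 = \pm 2$. If $\eps_0 = -2$ in this chart, I would pass to $(\tilde p,\tilde q) := (q,-p)$: this is symplectic because $\diff \tilde p \wedge \diff \tilde q = \diff p \wedge \diff q$, and yields $\tilde S := \tilde p\, \tilde q = -pq = -S$. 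The substitution $s \mapsto -\tilde s$ in the expansion converts its leading coefficient from $-2$ to $+2$, while $F$ is now expressed as $F = \tilde\lambda \circ \tilde S$ with $\tilde\lambda(t) := \lambda(-t)$, which still has nonzero derivative at $0$.

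For part (ii), let $(p,q)$ and $(p',q')$ both satisfy (i), and write $S = pq$, $S' = p'q'$, with $F = \lambda_1\circ S = \lambda_2\circ S'$. Then $S' = g(S)$ for the germ $g := \lambda_2^{-1}\circ\lambda_1$, which is smooth at $0$ with $g(0)=0$ and $g'(0)\neq 0$. A short sign analysis (tracking which pair of quadrants contains the trunk of $v$) shows that the condition $\eps_0 = +2$ in both charts forces $\lambda_1'(0)$ and $\lambda_2'(0)$ to have the same sign, hence $g'(0) > 0$. Writing $g(s) = s\,h(s)$ with $h$ smooth and $h(0) > 0$, one has
\[
s' \ln|s'| \;=\; s\, h(s) \ln|s| \;+\; s\, h(s) \ln h(s),
\]
where the second term is smooth in $s$. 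Equating the two expansions $2 s \ln|s| + \zeta_0(s) = 2s' \ln|s'| + \zeta_0'(s')$ from Lemma \ref{DMT1} yields that $s\,(h(s)-1)\ln|s|$ coincides with a smooth function of $s$. But a product $\psi(s)\ln|s|$ with $\psi$ smooth can be $C^\infty$ at the origin only when $\psi$ is flat at $0$ (expanding $\psi$ in Taylor series, each monomial $s^k\ln|s|$ fails to be $C^k$ for $k\ge 1$). Hence $h-1$, and therefore $g(s)-s$, is flat at $0$. Since $S$ vanishes at $O$, the composition $S' - S = (g - \id)\circ S$ has vanishing Taylor expansion at $O$ by the chain rule, i.e.\ $p'q' - pq$ is flat at $O$.

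The main obstacle is the sign bookkeeping in part (i), which ties the topological trunk/branches structure near $v$ to the quadrant geometry of the Darboux chart, and one must verify that the symplectic involution $(p,q)\mapsto(q,-p)$ is precisely the move that flips the sign of $\eps_0$. The remaining ingredients, namely the comparison of logarithmic expansions and the flatness criterion for $\psi$ when $\psi(s)\ln|s|$ is smooth, are routine.
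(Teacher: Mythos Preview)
The paper does not supply its own proof of this lemma: it is stated with attribution to Dufour, Molino, and Toulet \cite{DTM} and then used without further argument. So there is no in-paper proof to compare against, and your proposal stands on its own as a correct derivation of the statement from Lemmas \ref{MDL} and \ref{DMT1}.

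Two small points are worth tightening. First, the sign analysis you allude to in part (ii) can be made explicit: on the trunk, $\mu([v,x]) > 0$ while $\zeta_0(0)=0$, so for $x$ near $v$ the sign of $\mu([v,x])$ agrees with that of $2s\ln|s|$, forcing $s<0$ on the trunk in any chart with $\eps_0=+2$; the same reasoning in the second chart gives $s'<0$ there, hence $g'(0)>0$. Second, the equality $2s\ln|s|+\zeta_0(s)=2s'\ln|s'|+\zeta_0'(s')$ from Lemma \ref{DMT1} is available only on the trunk, i.e.\ for $s$ of one sign, so your flatness argument is a priori one-sided; but this is enough, since if $\psi(s)\ln|s|$ agrees on a one-sided interval with a function that is smooth across $0$, repeated differentiation and passage to the limit still kills each Taylor coefficient of $\psi$ in turn. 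With these clarifications your argument is complete.
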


\par
\begin{remark}
{\rm
Let $O$ be a hyperbolic (i.e., saddle) singular point of the fibration $\Fibr$ on $M$. Since the fibration $\Fibr$ is locally given by level sets of a simple Morse function $F$, we can apply Lemma~\ref{MDL2}. It follows from the second statement of the lemma that the function ${S} = pq$ is well defined up to a flat function. In particular, its Taylor expansion at the point $O$ does not depend on the choice of $F$. This implies that the Taylor expansions of the functions $\zeta_0(z), \zeta_1(z),$ and  $\zeta_2(z)$ entering the expansion \eqref{DMTFormulas} are well-defined symplectic invariants of the fibration $\Fibr$ at a trivalent point. 
Let us denote these Taylor power series by $[\zeta_0],[\zeta_1],[\zeta_2]$.
}
\end{remark}
 \begin{figure}[t]
\centerline{
\begin{tikzpicture}[thick]
    \node at (3.5,3.4) (nodeA) {$\ell(e_i)$};
        \node at (5.3,1) (nodeB) {$[\zeta_0], [\zeta_1], [\zeta_2](v_k)$};
    \node [vertex] at (3,0) (nodeC) {};
    \node [vertex]  at (3,1.05) (nodeD) {};
    \node [vertex] at (3,2.75) (nodeE) {};
    \node [vertex]  at (3,4) (nodeF) {};
    \node  at (3,1.05) (nodeDdouble) {};
    \draw   (nodeC) -- (nodeD);
    \fill (nodeC) circle [radius=1.5pt];
    \fill (nodeD) circle [radius=1.5pt];
    \fill (nodeE) circle [radius=1.5pt];
    \fill (nodeF) circle [radius=1.5pt];
    \draw   (nodeD) arc (-45:44:1.2cm);
    \draw   (nodeD) arc (225:136:1.2cm);
    \draw   (nodeE) -- (nodeF);
    \draw  [dashed, ->] (nodeB) -- (nodeDdouble);
    \node at (2,2) (nodeG) {$\Gamma_\Fibr$};
\end{tikzpicture}
}
\caption{A weighted Reeb graph $\Gamma_{\Fibr}$, whose weights are as follows: each edge $e_i$ is assigned its length $\ell(e_i)$, and each $3$-vertex $v_k$ is assigned its triple of Taylor series $[\zeta_0], [\zeta_1], [\zeta_2](v_k)$.}\label{weightedTorus}
\end{figure}
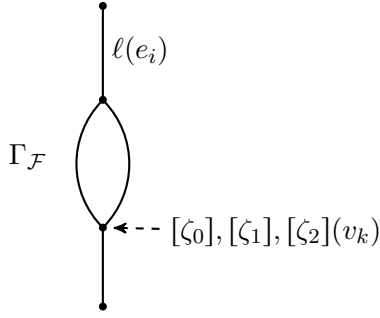
\begin{definition}\label{def:weighted}
{\rm
A \textit{weighted Reeb graph} $(\Gamma, \{\ell(e), [\zeta_j](v)\})$ is a connected graph  $\Gamma$ such that all its vertices are either $1$-valent or $3$-valent, with the following additional data (see Figure~\ref{weightedTorus}).

\begin{longenum}
\item Each edge $e$ is equipped with a positive real number - its length $\ell(e)$.
\item For each $3$-valent vertex, two of the adjacent edges are called branches, and the third is called a trunk. Each such a vertex $v$ is equipped with three real power series $[\zeta_0],[\zeta_1],[\zeta_2]$ of a variable $z$ at $0\in \R$, where $[\zeta_0]$ is assigned to the trunk, while $[\zeta_1]$ and $[\zeta_2]$ are assigned to two branches respectively, and  such that $[\zeta_0]+[\zeta_1]+[\zeta_2]= 0$. 
\end{longenum}

Note that an edge can be a branch for one of its endpoints, and a trunk for the other. 
Of course, only two of the power series, say $[\zeta_1]$ and $[\zeta_2]$, are independent, while
 $[\zeta_0]=-([\zeta_1]+[\zeta_2])$.
 
Two weighted Reeb graphs are isomorphic if they are isomorphic as combinatorial graphs, and the isomorphism between them preserves all the additional data. In particular, trunks are mapped to trunks and branches are mapped to branches.
}
\end{definition}

As explained in the remark above, the Reeb graph $\Gamma_{\Fibr}$ associated with a fibration $\Fibr$ can be naturally endowed with the structure of a weighted Reeb graph.

\begin{theorem}[Dufour, Molino, and Toulet \cite{DTM}]\label{thmDTM}
Simple Morse fibrations on closed connected symplectic surfaces are classified by weighted Reeb graphs. More precisely, the following statements hold.
\begin{longenum}
\item Let $M$ and $ N$ be two closed connected symplectic surfaces and let $\Fibr$ and $\Fibrtwo$ be simple Morse fibrations on $M$ and $N$ respectively. Then the following statements are equivalent:
\begin{longenum}
\item there exists a symplectomorphism $\Phi \colon M \to N$ which maps the fibers of $\Fibr$ to fibers of $\Fibrtwo$; \item the weighted Reeb graphs $\Gamma_{\Fibr}$ and $\Gamma_{\Fibrtwo}$ are isomorphic. 
 \end{longenum}
\item For each weighted Reeb graph $(\Gamma, \{\ell(e), [\zeta_j](v)\})$, there exists a closed connected symplectic  surface $M$, and a simple Morse fibration $\Fibr$ on $M$ such that the weighted Reeb graph $\Gamma_{\Fibr}$  of the fibration $\Fibr$ is $(\Gamma, \{\ell(e), [\zeta_j](v)\})$.\\
\end{longenum}
\end{theorem}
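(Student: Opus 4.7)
My plan is to prove both parts by a local-to-global assembly argument, treating three types of pieces separately: disks around extrema, cylinders over open edges, and figure-eight neighborhoods of saddles. The forward implication (a)$\Rightarrow$(b) of part (i) is immediate from the remark preceding the theorem: a fiber-preserving symplectomorphism induces a graph isomorphism preserving edge lengths (since $\mu$ is defined by $\omega$) and saddle Taylor triples $[\zeta_j]$ (these being extracted from the intrinsic Morse--Darboux chart of Lemma \ref{MDL2}). The content of the theorem therefore lies in the converse direction and in part (ii).

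For the converse, given an isomorphism of weighted Reeb graphs I would construct the desired symplectomorphism $\Phi$ piece by piece. Around each $1$-valent vertex, Lemma \ref{MDL} yields a Darboux chart in which the fibration is $\{p^2+q^2=\mathrm{const}\}$, and two such disks are fiber-preservingly symplectomorphic once their areas (hence the lengths of the unique adjacent edges) agree. Around each $3$-valent vertex, Lemma \ref{MDL2} yields a chart with $\omega=\diff p\wedge\diff q$ and fibration $\{pq=\mathrm{const}\}$, unique up to a flat function; Lemma \ref{DMT1} identifies the triple $[\zeta_0],[\zeta_1],[\zeta_2]$ as the complete formal invariant of this local data, so matching the $[\zeta_j]$ yields a formal fiber-preserving symplectic identification near the saddle. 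On each open edge the preimage is a cylinder, and the measure $\mu$ supplies an action coordinate $I$ with a conjugate angle $\theta$ that trivializes it as a standard symplectic $S^1$-bundle whose only invariant is its edge length. Gluing the three types of local identifications, with angle coordinates suitably rotated along each cylinder to match the Darboux charts at its endpoints, produces the global $\Phi$. Part (ii) proceeds by an analogous constructive recipe: use Borel's theorem to realize the prescribed series $[\zeta_j]$ by smooth germs and thereby build a saddle chart in the form $\omega=\diff p\wedge\diff q$ with fibers $\{pq=\mathrm{const}\}$ having the correct local measure expansions; build disks and cylinders of prescribed areas; and glue them along their boundaries. The relation $[\zeta_0]+[\zeta_1]+[\zeta_2]=0$ is precisely the compatibility condition that ensures the three local measure branches at each saddle are consistent with the lengths of the adjacent edges.

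The main technical obstacle is that the saddle identifications are only formal: Lemma \ref{MDL2}(ii) guarantees them modulo a function flat at the saddle, so the naive fiber map is symplectic only up to a flat error. I would resolve this by a Moser-type argument. If $\omega_0$ and $\omega_1$ are two symplectic forms on a saddle neighborhood sharing the fibration, with $\omega_1-\omega_0$ flat at the saddle, then one chooses a primitive $\sigma$ of $\omega_1-\omega_0$ lying in the ideal generated by $\diff F$, so that $\sigma$ is fiber-tangent and flat; the time-dependent vector field $X_t$ defined by $\iota_{X_t}\omega_t=-\sigma$ is then flat and fiber-preserving, its flow is well defined for $t\in[0,1]$ in a neighborhood of the saddle, and it carries $\omega_0$ to $\omega_1$ while preserving the fibration. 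An easier Moser trick on disks and cylinders reduces arbitrary fiber-preserving identifications there to the action-angle model. The remaining delicate point is the consistency of the angle-coordinate rotations around each cycle of the graph; this can be arranged because at every vertex one retains the freedom to shift the angle coordinate by an arbitrary function of $F$ alone, giving enough parameters to match the rotation monodromy edge by edge.
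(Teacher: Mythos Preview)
The paper does not prove this theorem. It is stated with attribution to Dufour, Molino, and Toulet \cite{DTM}, and immediately afterward the paper remarks that details of (the slightly stronger) Theorem~\ref{thmDTM2} can be found in Toulet's thesis \cite{Toulet}. The paper then \emph{uses} Theorem~\ref{thmDTM} as a black box in the proof of Theorem~\ref{thm1}. So there is no ``paper's own proof'' to compare against; your proposal is an attempt to reconstruct the argument of \cite{DTM, Toulet}.

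That said, your outline is the natural one and almost certainly close in spirit to the original: decompose into elliptic disks, edge cylinders, and saddle neighborhoods; use Lemmas~\ref{MDL} and~\ref{MDL2} for local normal forms; invoke action--angle coordinates on cylinders; glue. You have also correctly identified the genuine technical crux, namely that the saddle identification from Lemma~\ref{MDL2} is only determined modulo a flat ambiguity, and that a Moser argument is needed to absorb it.

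The one place where your sketch is too quick is the claim that one can choose the Moser primitive $\sigma$ of $\omega_1-\omega_0$ ``in the ideal generated by $\diff F$.'' Concretely, in a chart with $\omega_0=\diff p\wedge\diff q$ and $F=pq$, writing $\omega_1-\omega_0=g\,\diff p\wedge\diff q$ with $g$ flat at the origin, the requirement $\sigma=h\,\diff F$ forces the homological equation $p\,h_p-q\,h_q=g$ for the hyperbolic linear field $X_F=p\,\partial_p-q\,\partial_q$. Because the eigenvalues are $\pm 1$, monomials $(pq)^k$ lie in the kernel of $X_F$, so this equation is \emph{not} solvable for arbitrary $g$; one needs precisely that $g$ have no ``resonant'' part, and one must argue that flatness of $g$ suffices to produce a smooth $h$ (e.g.\ by integrating along the flow of $X_F$ and using flatness to control the integral near the separatrices). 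This is doable but is exactly the kind of step that the original references work out carefully; it should not be waved through. A similar remark applies to your last sentence about killing the angle monodromy around cycles: the freedom to reparametrize the angle by a function of $F$ is real, but you should check that this freedom survives the constraints already imposed at the two endpoints of each edge.
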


In what follows, we need a slightly stronger version of Theorem \ref{thmDTM}\,$i)$.

\begin{customthm}{\ref{thmDTM}$'$}\label{thmDTM2}
 Let $M$ and $ N$ be two closed connected symplectic surfaces and let $\Fibr$ and $\Fibrtwo$ be simple Morse fibrations on $M$ and $N$ respectively. 
Let also $\phi \colon \Gamma_{\Fibr} \to \Gamma_{\Fibrtwo}$ be an isomorphism of weighted Reeb graphs. Then there exists a symplectomorphism of surfaces $\Phi \colon M \to N$ which is a lift of 
the graph isomorphism $\phi$  in the following sense.

$a$) The mapping $\Phi$ maps fibers of $\Fibr$ to fibers of $\Fibrtwo$.

$b$) If $e_1$ is an edge of $\Gamma_{\Fibr} $ and $\phi(e_1) = e_2$, then the cylinder $\pi_{\Fibr}^{-1}(e_1)$ is mapped under $\Phi$ to the cylinder $\pi_{\Fibrtwo}^{-1}(e_2)$ where $\pi_{\Fibr} \colon M \to  \Gamma_{\Fibr} $ and  $\pi_{\Fibrtwo} \colon N \to  \Gamma_{\Fibrtwo} $ are natural projections. 
\end{customthm}
The proof of Theorem  \ref{thmDTM2} follows the lines of the proof of Theorem  \ref{thmDTM}\,$i)$. Details of the proof can be found in Toulet's thesis \cite{Toulet}.

\par
Now, note that each measured Reeb graph $(\Gamma,  f, \mu)$ can be viewed as a weighted Reeb graph $(\Gamma,\{\ell(e), [\zeta_j](v)\})$. Namely, assume that  $\Gamma$ is a measured Reeb graph $\Gamma_F$ associated with a simple Morse function $F$ on a symplectic manifold $M$. The function $F$ defines a fibration $\Fibr$ on $M$. Clearly, the weighted Reeb graph $\Gamma_{\Fibr}$ coincides, as an abstract graph, with the graph $\Gamma$. We claim that the structure of a measured Reeb graph on $\Gamma$ uniquely determines the structure of a weighted Reeb graph $(\Gamma, \{\ell(e), [\zeta_j](v)\})$. Obviously, the lengths $\ell(e)$ of the edges are immediately recovered from the measure, so 
it suffices to show how to recover the powers series $[\zeta_j]$ associated with $3$-valent vertices.

\begin{proposition}\label{fromMeasuredGraphToWeightedGraph}
Let $v$ be a $3$-valent vertex of $\Gamma$.
Let also $e_0$ be the trunk of $v$, and let $e_1, e_2$ be the branches of $v$.
Then $[\zeta_i](z)$ is equal to the Taylor expansion of the function
\begin{align}\label{zetaformula}
\eps_i z \ln \left|\frac{\psi^{-1}(z)}{z} \right| + \eta_i(\psi^{-1}(z))
\end{align}
at $z = 0$ where $\eps_0 = 2$,  $\eps_1 = \eps_2 =  -1$, and $\psi, \eta_0, \eta_1, \eta_2$ are defined in Proposition \ref{measureproperty}(iii).

\end{proposition}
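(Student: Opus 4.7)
The plan is to equate the two available expansions of $\mu([v,x])$ near the trivalent vertex $v$: the one from Proposition \ref{measureproperty}(iii), written in terms of $f(x)$, and the one from Lemma \ref{DMT1}, written in terms of the locally defined graph function $s(x)$. Algebraically inverting the resulting identity produces the claimed formula.

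First, by Lemma \ref{MDL2}$\,i)$, I would fix a Darboux chart at the saddle $\pi^{-1}(v)$ with $F = \lambda\circ S$, $S = pq$, $\lambda'(0)\ne 0$, and the sign normalization $\eps_0 = +2$. Then $s(x) = \lambda^{-1}(f(x))$ near $v$, and looking back at the proof of Proposition \ref{measureproperty}(iii) (which was obtained precisely by substituting $s = \lambda^{-1}\circ f$ into \eqref{DMTFormulas}) identifies the smooth function $\psi$ appearing in \eqref{saddleAsymptotic} with $\lambda^{-1}$, so in particular $\psi(0) = 0$ and $\psi'(0)\ne 0$. Setting $z = f(x)$ and equating the two expressions for $\mu([v,x])$ gives
\begin{equation*}
\eps_i\psi(z)\ln|z| + \eta_i(z) \;=\; \eps_i\psi(z)\ln|\psi(z)| + \zeta_i(\psi(z)).
\end{equation*}
Solving for $\zeta_i(\psi(z))$ and performing the change of variable $z' = \psi(z)$, $z = \psi^{-1}(z')$, yields exactly the formula \eqref{zetaformula} in the statement.

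The one nontrivial point, and the main (though mild) obstacle, is checking that the right-hand side of \eqref{zetaformula} is genuinely smooth at $z' = 0$, so that its Taylor expansion makes sense. This follows since $\psi^{-1}$ is smooth with $(\psi^{-1})'(0) = 1/\psi'(0)\ne 0$, which ensures that $\psi^{-1}(z')/z'$ extends smoothly to $z'=0$ with the non-zero value $1/\psi'(0)$, and hence $\ln|\psi^{-1}(z')/z'|$ is smooth near the origin. One should also remark that the Darboux chart of Lemma \ref{MDL2}, and therefore $\psi$ itself, is unique only modulo flat functions; however such flat ambiguities produce only flat corrections on both sides of the derivation, which is exactly why the proposition is phrased in terms of Taylor power series rather than germs of smooth functions. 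The non-smoothness of $\ln|z|$ at the origin guarantees that the decomposition in \eqref{saddleAsymptotic} determines $[\psi]$ and $[\eta_i]$ intrinsically, and \eqref{zetaformula} thus produces a well-defined element $[\zeta_i]\in \R[[z']]$.
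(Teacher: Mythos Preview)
Your proof is correct and follows essentially the same route as the paper's: equate the two expansions \eqref{saddleAsymptotic} and \eqref{DMTFormulas} of $\mu([v,x])$, use that $s$ and $\psi(f)$ agree modulo flat functions, and invert to express $\zeta_i$ in terms of $\psi^{-1}$ and $\eta_i$. The paper extracts $s \equiv \psi(f)$ from the identity itself rather than by identifying $\psi$ with $\lambda^{-1}$ a priori, but this is a cosmetic difference; your additional remarks on smoothness of \eqref{zetaformula} and on the flat-function ambiguity are welcome clarifications that the paper leaves implicit.
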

\begin{proof}
Combining formulas \eqref{saddleAsymptotic} and  \eqref{DMTFormulas}  we get
\begin{align*}
  \eps_i s \ln |  s | + \zeta_i(  s ) = \eps_i\psi(  f )\ln |   f | + \eta_i(   f ) = \eps_i\psi(  f )\ln |  s | + \,\eps_i\psi(  f )\ln \left|  \frac{  f}{s}  \right|  + \eta_i(   f )\,.
\end{align*} 
Therefore, modulo flat functions, we have 
$$
s \equiv \psi(  f ), \quad \zeta_i(  s ) \equiv \,\eps_i\psi(  f )\ln \left|  \frac{  f}{s}  \right|  + \eta_i(   f ) .
$$
Then the statement follows by substituting $  f \equiv \psi^{-1}(s)$ into the latter equation.
\end{proof}

Thus, if we have an ``abstract'' measured Reeb graph (i.e. any object satisfying 
Definition~\ref{MRGDef}), then we can regard Proposition \ref{fromMeasuredGraphToWeightedGraph} as a definition. Therefore, any measured Reeb graph can be viewed as a weighted Reeb graph as well. 

\begin{proof}[Proof of Theorem \ref{thm1}]
Let us prove the first statement of Theorem \ref{thm1}.
The implication (a) $\Rightarrow$ (b) follows from the construction of the measured Reeb graph associated with a simple Morse function, so it suffices to prove the implication (b) $\Rightarrow$ (a). \par 

Assume that $\phi \colon \Gamma_F \to \Gamma_G $ is an isomorphism of measured 
Reeb graphs. Then it can be viewed as an isomorphism of weighted Reeb graphs. 
Therefore, by Theorem \ref{thmDTM2}, there exists a symplectomorphism $\Phi \colon M \to M$ 
which maps the fibers of $\Fibr$ to fibers of $\Fibrtwo$. Moreover, if $e_1$ is an edge 
of $\Gamma_{\Fibr} $ and $\phi(e_1) = e_2$, then the map $\Phi$ takes
the cylinder $\rm{Cyl}_{1} = \pi_{\Fibr}^{-1}(e_1)$ 
to the cylinder $\rm{Cyl}_{2} = \pi_{\Fibrtwo}^{-1}(e_2)$. 
\par
Now, let us again consider $\Gamma_F $ and $ \Gamma_G $ as measured Reeb graphs. Since the map $\Phi \colon \rm{Cyl}_{1} \to \rm{Cyl}_{2}$ is fiberwise, it descends to a map $\phi' \colon e_1 \to e_2$. Moreover, since the diffeomorphism $\Phi$ is symplectic, the graph map $\phi'$ is measure-preserving, so $\phi' = \phi$. This implies that  the symplectomorphism $\Phi$ is the lift of $\phi$, and moreover that this diffeomorphism $\Phi$ maps function $F$ to $G$, q.e.d. 
\par
Now, let us prove the second statement.
Consider the measured Reeb graph $\Gamma$ as a weighted Reeb graph. By Theorem \ref{thmDTM}\,$ii)$, there exists a symplectic surface $N$, and a fibration $\Fibr$ on $N$ such that $\Gamma_\Fibr = \Gamma$.
Since the first Betti number of $\Gamma$ is equal to the genus of $M$, the surfaces $M$ and $N$ have the same genus. Moreover, since the volume of $\Gamma$ is equal to the volume of $M$, the surfaces $M$ and $N$ are of the same area. Therefore, by Moser's theorem \cite{Moser65},
surfaces $M$ and $N$ are symplectomorphic. Using this symplectomorphism, we transport the fibration $\Fibr$ from $N$ to $M$. In this way we obtain a fibration $\Fibr$ on $M$ such that the measured Reeb graph of $\Fibr$ is $\Gamma$.
\par
 Note that, a priori, there is no projection $\pi \colon M \to \Gamma$, since a weighted Reeb graph is defined as a combinatorial object. However, there is an identification between cylinders separating critical fibers of $\Fibr$, and edges of $\Gamma$. As it is easy to see, there exists a unique projection $\pi \colon M \to \Gamma$ which realizes this identification and respects the measure. This means that the measure on $\Gamma$ is the push-forward of the symplectic measure on $M$ under the map $\pi$.
 \par
Then lift the function $f$ from the graph $\Gamma$ to a function $F$ on $M$ by means of the projection $\pi$. We need to check that $F$ is a smooth function on $M$. Away from hyperbolic fibers, this follows from the first two statements of Proposition \ref{measureproperty} (recall that these statements are included in the definition of a measured Reeb graph). Therefore, it suffices to prove that $F$ is smooth near each hyperbolic fiber.
\par
Let $v$ be a $3$-valent vertex of $\Gamma$.
Let also $e_0$ be the trunk of $v$, and let $e_1, e_2$ be the branches of $v$. As it follows from the construction of the weighted Reeb graph associated with $\Fibr$, there exists a smooth function $S$ defined in a neighborhood of $\pi^{-1}(v)$ such that the fibration $\Fibr$ is locally given by connected components of level sets $\{S = \const\}$, and for each $x \in e_i$ sufficiently close to $v$, we have
\begin{align}\label{measureInTermsOfS}
\mu([v,x])&= \eps_i s(x) \ln |  s(x) | + \zeta_i(  s(x) )
\end{align}
where $\eps_0 = 2$,  $\eps_1 = \eps_2 =  -1$, and $s$ is a function defined in the neighborhood of $v$ by descending the function $S$. By definition of a weighted Reeb graph, the Taylor expansion of  $\zeta_i(z)$ at $z = 0$ coincides with the power series $[\zeta_i]$ associated with the vertex $v$. On the other hand, since the graph $\Gamma$ is actually a measured Reeb graph, the power series $[\zeta_i]$ is equal to the Taylor expansion of
\begin{align}\label{wavezeta}
\wave \zeta_i(z) = \eps_i z \ln \left|\frac{\psi^{-1}(z)}{z} \right| + \eta_i(\psi^{-1}(z))
\end{align}
at $z = 0$. Therefore, $\zeta_i \equiv \wave \zeta_i$ modulo a flat function. Now, we need the following technical lemma.
\begin{lemma}\label{flatTerm}
Let $a(z), b(z)$ be two functions which are defined and smooth in a punctured neighborhood of the origin $0\in \R$. Assume that the difference $a(z)-b(z)$ is a smooth function flat at the origin, and that $b'(z)$ is bounded away from zero.
Then there exists a diffeomorphism of the form 
$$
h(z) = z + \mbox{flat function},
$$
defined in a (possibly smaller) neighborhood of the origin, such that $b$ is obtained from $a$ by the diffeomorphism $h$:  
$
b = a \circ h.
$
\end{lemma}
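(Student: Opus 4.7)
The plan is to construct $h$ in the form $h(z) = z + \phi(z)$ with $\phi$ smooth and flat at the origin, by solving the functional equation $a(z + \phi(z)) = b(z)$ through the implicit function theorem. Set $c(z) := b(z) - a(z)$, which by hypothesis extends to a $C^\infty$ function on a full neighborhood of $0$ vanishing there to infinite order. Since $a'(z) = b'(z) - c'(z)$ and $c'$ is likewise flat, while $|b'(z)| \ge \delta$ for some $\delta > 0$ near $0$, we get $|a'(z)| \ge \delta/2$ on a (possibly smaller) punctured neighborhood of the origin.

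Next, apply the fundamental theorem of calculus to rewrite
\begin{equation*}
a(z + \phi) - a(z) = \phi \cdot G(z, \phi), \qquad G(z, \phi) := \int_0^1 a'(z + t\phi)\,\diff t,
\end{equation*}
so that the identity $a(z + \phi(z)) = b(z)$ becomes $\phi \cdot G(z, \phi) = c(z)$. On the region where $z$ lies in a sufficiently small punctured neighborhood of $0$ and $|\phi| < |z|/2$, the point $z + t\phi$ stays in the domain of $a$ and away from $0$ for $t \in [0,1]$, so $G$ is smooth with $G(z, 0) = a'(z) \neq 0$. Since $\partial_\phi[\phi \cdot G(z, \phi)]\big|_{\phi=0} = G(z, 0) \neq 0$, the implicit function theorem (applied separately on either side of $0$) yields a unique small smooth solution $\phi(z) = c(z)/G(z, \phi(z))$ defined on a punctured neighborhood of $0$.

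The flatness of $\phi$ at $0$ is inherited from $c$: since $|G(z, \phi(z))| \ge \delta/2$ on the admissible region, we immediately obtain $|\phi(z)| \le (2/\delta)|c(z)| = O(z^N)$ for every $N \ge 0$. Differentiating the relation $\phi \cdot G(z, \phi) = c(z)$ inductively expresses each $\phi^{(k)}(z)$ as a finite combination of derivatives of $c$ (all flat at $0$) and of $G$ (each of at most polynomial growth in $1/|z|$, which is absorbed by the flatness of $c$). Hence all derivatives of $\phi$ extend continuously to $0$ and vanish there, so $\phi$ is $C^\infty$ and flat on a full neighborhood of $0$. Then $h(z) = z + \phi(z)$ satisfies $h'(0) = 1$, and by the inverse function theorem is a local $C^\infty$-diffeomorphism near $0$ with $a \circ h = b$ by construction.

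The main difficulty is the possibly singular behavior of $a$ at the origin: the implicit function theorem operates only on the punctured neighborhood, and $G$ along with its derivatives may blow up as $z \to 0$. Extending $\phi$ smoothly and flatly through the origin rests on the observation that the derivatives of the flat function $c$ decay faster than any negative power of $z$, and therefore dominate any such algebraic blow-up.
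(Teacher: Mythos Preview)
Your approach is genuinely different from the paper's. You solve $a(z+\phi)=b(z)$ directly by writing $a(z+\phi)-a(z)=\phi\,G(z,\phi)$ and inverting via the implicit function theorem. The paper instead uses the Moser path method: it joins $b$ to $a$ through the homotopy $t\,a+(1-t)\,b$ and recovers $h$ as the time-one map of the ODE
\[
\frac{dh}{dt}=\frac{b(h)-a(h)}{b'(h)+t\bigl(a'(h)-b'(h)\bigr)},\qquad h(z,0)=z,
\]
whose right-hand side is flat in $h$ because the numerator is flat and the denominator is bounded away from zero. Your route is more elementary and needs no ODE theory; the Moser scheme, on the other hand, produces $h$ as a flow and makes the flatness of the perturbation visible directly in the vector field.

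There is, however, a gap in your flatness argument for $\phi$. You assert that the partial derivatives of $G(z,\phi)=\int_0^1 a'(z+t\phi)\,dt$ grow at most polynomially in $1/|z|$, so that the flat factor $c$ absorbs them in the inductive estimate for $\phi^{(k)}$. Nothing in the hypotheses guarantees this: the lemma assumes only that $b'$ is bounded away from zero and that $a-b$ is flat, which says nothing about $b^{(k)}$ (hence $a^{(k)}$) for $k\ge 2$. For instance, $b'(z)=2+\sin(e^{1/z})$ is smooth on a punctured interval with $b'\in[1,3]$, yet $b''(z)\sim -e^{1/z}/z^{2}$ grows faster than any power of $1/|z|$. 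Taking $c(z)=e^{-1/|z|}$ one finds $|\phi|\lesssim|c|$ but $|\phi\,G_z|\sim |c|\cdot|b''|\sim 1/z^{2}$, so your expression $\phi'=(c'-\phi G_z)/(G+\phi G_\phi)$ does not obviously tend to zero and the induction stalls at the very first step. In the paper's intended application $a$ and $b$ carry only logarithmic singularities, so all their derivatives do have polynomial blow-up and your scheme would go through there; but as a proof of the lemma \emph{as stated}, the polynomial-growth claim is unjustified. (To be fair, the paper's own proof is also terse at the analogous point, asserting without detail that the ODE solution satisfies $h(z,t)=z+r(z,t)$ with $r$ flat in $z$.)
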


\begin{remark}
{\rm
The statement of the lemma is also true if $b'(z)$ is not bounded away from zero, but there exists an integer $n$ such that $b'(z)/z^n$ is bounded away from zero in a punctured neighborhood of the origin.
}
\end{remark}

\begin{proof}[Proof of Lemma \ref{flatTerm}]
Apply the Moser path method. Instead of looking for one diffeomorphism $h$, we will be looking for a family of diffeomorphisms $h(z,t)$ such that
\begin{align}\label{homotopyEqn}
b(z) = t \cdot a(h(z,t)) + (1 - t)\cdot b(h(z,t))
\end{align}
and
$h(z, 0) = z$. 
Differentiating \eqref{homotopyEqn} with respect to $t$, we obtain the following differential equation
\begin{align}\label{homologicalEqn}
\diffFXp{h}{t} = \frac{b(h) - a(h)}{b'(h) + t\cdot(a'(h) - b'(h))}.
\end{align}
Using that $b'$ is bounded away from zero, we conclude that the right-hand side of \eqref{homologicalEqn} is flat in $h$ for any fixed $t$. This easily implies that if $z$ is sufficiently small, the solution of \eqref{homologicalEqn} with initial condition $h(z, 0) = z$ is extendable up to time $t = 1$ and has the form
$$
h(z,t) = z + r(z,t)
$$
where $r(z,t)$ is flat in $z$ for all $t \in [0,1]$, and $r(z,0) = 0$. Finally, note that equation \eqref{homologicalEqn} together with the condition $h(z,0) = z$ imply \eqref{homotopyEqn}, therefore $b(z) = a(h(z,1))$, q.e.d.
\end{proof}
Now, we use Lemma \ref{flatTerm} to find functions $h_i(s)$ such that $h_i(s) - s $ is flat at $s=0$, and
\begin{align}\label{changeOfS}
\eps_i s \ln |  s | + \zeta_i(  s ) = \eps_i h_i(s) \ln |  h_i(s) | + \wave \zeta_i(  h_i(s) ).
\end{align}
Combining equations \eqref{saddleAsymptotic},\eqref{measureInTermsOfS}, and \eqref{changeOfS}, for a function $f$ normalized by the condition $f(v)=0$ we obtain 
$$
\eps_i\psi(  f )\ln |   f | + \eta_i(   f ) =  \eps_i h_i(s) \ln |  h_i(s) | + \wave \zeta_i(  h_i(s) ).
$$
Using \eqref{wavezeta}, we conclude that
$$
\eps_i h_i(s) \ln |\psi^{-1}(h_i(s))| + \eta_i(\psi^{-1}(h_i(s))) = \eps_i\psi(  f )\ln |   f | + \eta_i(   f ), 
$$
and thus
$$
f(x) = \psi^{-1}(h_i(s(x)))
$$
for any $x \in e_i$ sufficiently close to $v$. 
Therefore,  on $\pi^{-1}(e_i) \subset M$, one has
$$
F = \psi^{-1} \circ h_i \circ S.
$$
Hence, since $h_i(z) - z$ is flat at $z = 0$, we conclude that $F$ is a smooth function near
a hyperbolic level and hence everywhere.
It is also easy to see that $F$ is a simple Morse function (since so is the function $S$), and that its measured Reeb graph coincides with $\Gamma$, as desired. Theorem  \ref{thm1} is proved.
\end{proof}

As we mentioned, Theorem \ref{thm:sdiff-functions}, in addition to classifying generic functions 
with respect to $\SDiff(M)$-action on any surface $M$, 
also describes the $\SDiff_0(M)$-classification in the case of $M=S^2$.
In the general case of $M$ of an arbitrary genus the classification of functions 
with respect to the $\SDiff_0(M)$-action, i.e. by symplectomorphisms isotoped to the identity,
is much more subtle than that for $\SDiff(M)$-action.
Now we describe this classification, i.e.,  the discrete invariants, for the $\SDiff_0(M)$-action 
on simple Morse functions.  It turns out it is convenient to treat separately the cases of $genus(M)=1$ and $genus(M)\ge 2$.


\bigskip

\subsection{Classification of simple Morse functions under the $\SDiff_0(M)$ action: genus one case}\label{sect:genus-one}

Assume that $M = \T^2$ is a symplectic two-dimensional torus, a symplectic surface of genus one with a fixed symplectic form $\omega$, and let $F \colon \T^2 \to \R$ be a simple Morse function on $\T^2$. The projection $\pi \colon \T^2 \to \Gamma_F$ from $\T^2$ to the Reeb graph $\Gamma_F$ of $F$ induces an epimorphism
$$
\pi_* \colon \Hom_1(\T^2,\Z) \to \Hom_1(\Gamma_F, \Z).
$$

\begin{definition} \label{def:freeze-torus}
{\rm
Let  $\T^2$
be a symplectic two-dimensional  torus. A measured Reeb graph $\Gamma$ compatible 
with $\T^2$ is \textit{frozen into} $\T^2$ if it is endowed with an epimorphism 
$\pi_* \colon \Hom_1(\T^2,\Z) \to \Hom_1(\Gamma, \Z)$. 
Two measured Reeb graphs $(\Gamma_1, (\pi_1)_{*})$ and $(\Gamma_2, (\pi_2)_{*})$ 
frozen into the same 
torus $\T^2$ are isomorphic if there exists an isomorphism $\phi \colon \Gamma_1 \to \Gamma_2$
 of measured Reeb graphs such that the following diagram commutes
\begin{equation}\label{comDiag}
\centering
\begin{tikzcd}
& \Hom_1(\T^2,\Z) \arrow{dl}[swap]{(\pi_1)_{*}}  \arrow{dr}{(\pi_2)_{*}}&\\
 \Hom_1(\Gamma_1,  \Z)  \arrow{rr}{\phi_*} & & \Hom_1(\Gamma_2,  \Z)\,,
\end{tikzcd}
\end{equation}
where $(\pi_1)_*$ is the freezing homomorphism 
of $\Gamma_1$, and  $(\pi_2)_*$ is the freezing homomorphism of $\Gamma_2$.
}
\end{definition}

Thus, to each simple Morse function $F$ on a symplectic two-dimensional torus $(\T^2, \omega)$, we associate a measured Reeb graph $(\Gamma_F, (\pi_F)_{*})$ frozen into $\T^2$. This graph is invariant under the action of $\SDiff_0(\T^2)$ on simple Morse functions. The following theorem states that this invariant is complete.

\begin{theorem}\label{Sdiff0Torus}
Let $(\T^2, \omega)$ be a symplectic two-dimensional torus. Then there is a one-to-one correspondence between simple Morse functions on $\T^2$, considered up to symplectomorphism isotopic to the identity, and (isomorphism classes of) measured Reeb graphs frozen into~$\T^2$. In other words, the following statements hold.
\begin{longenum}
\item Let $F,G \colon \T^2 \to \R$ be two simple Morse functions. Then the following two conditions are equivalent:
\begin{longenum} \item there exists a symplectomorphism $\Phi \colon \T^2 \to \T^2$ isotopic to the identity such that $\Phi_* F = G$;
\item the  measured Reeb graphs $(\Gamma_F, (\pi_F)_{*})$ and $(\Gamma_G, (\pi_G)_{*})$ 
frozen into $(\T^2, \omega)$ are isomorphic. \end{longenum}
 Moreover, any isomorphism $\phi \colon (\Gamma_F, (\pi_F)_{*})\to (\Gamma_G, (\pi_G)_{*})$ of  measured Reeb graphs frozen into the same torus $\T^2$
 can be lifted to a symplectomorphism $\Phi \colon \T^2 \to \T^2$ isotopic to the identity such that $\Phi_*F = G$.
\item For each measured Reeb graph $(\Gamma, \pi_{*})$ frozen into $\T^2$, there exists a simple Morse function $F \colon \T^2 \to \R$ such that the frozen measured Reeb graph  $(\Gamma_F, (\pi_F)_{*})$ of $F$ is $(\Gamma, \pi_{*})$.
\end{longenum}
\end{theorem}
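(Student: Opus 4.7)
The easy direction $(a) \Rightarrow (b)$ of part~(i) is immediate: any $\Phi \in \SDiff_0(\T^2)$ is isotopic to the identity through diffeomorphisms and hence acts trivially on $\Hom_1(\T^2,\Z)$, so it commutes with the freezing homomorphisms up to the induced isomorphism of Reeb graphs. For $(b) \Rightarrow (a)$, let $\phi\colon \Gamma_F \to \Gamma_G$ be an isomorphism of frozen measured Reeb graphs. Theorem~\ref{thm:sdiff-functions} applied to the underlying isomorphism of measured Reeb graphs produces $\Phi_0 \in \SDiff(\T^2)$ with $(\Phi_0)_* F = G$ and $\pi_G = \phi \circ \pi_F \circ \Phi_0^{-1}$. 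The compatibility $\phi_* \circ (\pi_F)_* = (\pi_G)_*$ then forces $(\pi_G)_* \circ (\Phi_0)_* = (\pi_G)_*$, so $(\Phi_0)_*$ stabilizes the rank-one subgroup $K := \Ker{(\pi_G)_*}$ and acts as the identity on $\Hom_1(\T^2,\Z)/K$. Because $\Phi_0$ is symplectic with $F = G \circ \Phi_0$, it intertwines the Hamiltonian vector fields $X_F$ and $X_G$ and therefore preserves the Hamiltonian orientations of regular level circles; hence $(\Phi_0)_*|_K = +1$. In a symplectic basis $(e_1, e_2)$ of $\Hom_1(\T^2,\Z)$ with $e_1$ generating $K$, we obtain $(\Phi_0)_* = \begin{pmatrix} 1 & k \\ 0 & 1 \end{pmatrix} = T_\gamma^k$, where $T_\gamma$ is the Picard--Lefschetz Dehn twist along a non-separating regular level circle $\gamma$ of $G$ representing $e_1$, and $k \in \Z$.

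The central step is to cancel the twist $T_\gamma^k$ by an element of the $\SDiff(\T^2)$-stabilizer of $G$. Choose an edge $e$ of $\Gamma_G$ whose preimage cylinder $M_e := \pi_G^{-1}(e)$ contains $\gamma$, and fix action-angle coordinates $(I,\theta)$ on $M_e$ in which $\omega = dI\wedge d\theta$ and $G = g(I)$. For a smooth function $\chi\colon\R\to\R$ supported in the interior of $f(e)$, whose profile makes the total $\theta$-shear $\chi'(g(I))g'(I)$ increase by precisely the integer $-k$ from one end of $M_e$ to the other while vanishing at both ends, the time-one map $\Psi$ of the Hamiltonian flow of $\chi\circ G$ extends by the identity to $\T^2\setminus M_e$, stabilizes $G$, and acts on $\Hom_1(\T^2,\Z)$ as $T_\gamma^{-k}$. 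Then $\Phi_1 := \Psi \circ \Phi_0$ satisfies $(\Phi_1)_* F = G$, still lifts $\phi$ (since $\pi_G \circ \Psi = \pi_G$), and acts as the identity on $\Hom_1(\T^2,\Z)$. Invoking the classical identification $\pi_0(\SDiff(\T^2)) = \MCG(\T^2) = \mathrm{SL}(2,\Z)$, which in particular gives $\SDiff(\T^2) \cap \Diffeo_0(\T^2) = \SDiff_0(\T^2)$, we conclude $\Phi_1 \in \SDiff_0(\T^2)$, proving $(b) \Rightarrow (a)$ together with the lifting statement.

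For part~(ii), given a frozen measured Reeb graph $(\Gamma,\pi_*)$ compatible with $\T^2$, apply Theorem~\ref{thm:sdiff-functions}(ii) to obtain $F_0 \colon \T^2 \to \R$ whose (unfrozen) measured Reeb graph is $\Gamma$; this produces some freezing $(\pi_{F_0})_*$. Both $(\pi_{F_0})_*$ and $\pi_*$ are epimorphisms $\Z^2 \twoheadrightarrow \Hom_1(\Gamma,\Z) \cong \Z$ with primitive rank-one kernels. Since $\mathrm{SL}(2,\Z)$ acts transitively on primitive rank-one sublattices of $\Z^2$, there exists $A \in \mathrm{SL}(2,\Z)$, possibly after composing with an automorphism of $(\Gamma, f, \mu)$ or the sign flip on $\Hom_1(\Gamma,\Z)$, such that $\pi_* \circ A_* = (\pi_{F_0})_*$. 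Realizing $A$ by the corresponding linear, hence symplectic, automorphism $\Phi_A$ of $\T^2 = \R^2/\Z^2$ and setting $F := F_0 \circ \Phi_A^{-1}$ produces a simple Morse function whose frozen measured Reeb graph is isomorphic to $(\Gamma,\pi_*)$.

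The main obstacle is the Dehn-twist construction of the second paragraph, where $\Psi$ must be an \emph{exact} twist of the integer power $-k$ (not merely an approximate or homotopic one) while remaining smooth at the two ends of $M_e$; this forces a careful choice of the scalar profile $\chi$ on the action variable. The secondary ingredient is the Moser-type identity $\SDiff(\T^2)\cap\Diffeo_0(\T^2)=\SDiff_0(\T^2)$, standard for closed symplectic surfaces.
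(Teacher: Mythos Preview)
Your overall strategy matches the paper's: lift $\phi$ to a symplectomorphism via Theorem~\ref{thm:sdiff-functions}, show that its action on $\Hom_1(\T^2,\Z)$ is a power of a Dehn twist along a regular $G$-level, cancel that twist by a symplectomorphism preserving $G$, and then invoke the fact that a symplectomorphism of the torus acting trivially on homology is isotopic to the identity. The paper does exactly this, only more tersely: it simply says the correcting map $\Psi$ ``can be constructed as a suitable power of the Dehn twist about any connected component of a regular $G$-level.''

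There is, however, a genuine error in your realization of that Dehn twist. You describe $\Psi$ as the time-one map of the Hamiltonian flow of $\chi\circ G$, where $\chi$ is compactly supported in the interior of $f(e)$. But then $\chi\circ G$ is a globally defined smooth function on $\T^2$, so its time-one flow lies in $\Ham(\T^2)\subset\SDiff_0(\T^2)$ and therefore acts \emph{trivially} on $\Hom_1(\T^2,\Z)$; it cannot equal $T_\gamma^{-k}$ for $k\neq 0$. Concretely, in your action-angle coordinates the $\theta$-shift is $\chi'(g(I))g'(I)$, and since $\chi$ has compact support this shift vanishes near \emph{both} ends of $M_e$, so its net change across the cylinder is zero, not $-k$. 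The correct construction is the map $(I,\theta)\mapsto(I,\theta+\rho(I))$ where $\rho$ is smooth, equals $0$ near one end and equals the integer $-k$ near the other; this is a symplectomorphism of $\T^2$ preserving $G$ (since adding an integer to $\theta$ is the identity on the circle) and represents $T_\gamma^{-k}$, but it is \emph{not} globally Hamiltonian.

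Two smaller points. First, your argument that $(\Phi_0)_*|_K=+1$ via ``Hamiltonian orientations'' is incomplete: knowing that $(\Phi_0)_*$ sends the Hamiltonian-oriented class $[C_F]$ to $[C_G]$ does not by itself tell you these are the \emph{same} generator of $K$. The paper instead combines the diagram (which gives the $(2,2)$-entry equal to $1$) with the fact that $\Phi_0$ is orientation-preserving (so $\det(\Phi_0)_*=1$), forcing the $(1,1)$-entry to be $+1$. Second, in part~(ii), ``linear, hence symplectic'' presumes $\omega$ is translation-invariant; the paper avoids this by taking any orientation-preserving diffeomorphism in the right mapping class and then using Moser's theorem to find a symplectic representative of that isotopy class.
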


\begin{proof}
Let us prove the first statement.
The implication (a) $\Rightarrow$ (b) is obvious, so it suffices to prove the implication (b) $\Rightarrow$ (a). Assume that $\phi \colon \Gamma_F \to \Gamma_G $ is an isomorphism of measured Reeb graphs frozen into $\T^2$. By Theorem \ref{thm1}, it can be lifted to a symplectomorphism $
 \Phi' \colon \T^2 \to \T^2$ such that $\Phi'_* F = G$. From diagram \eqref{comDiag} we obtain the following commutative diagram
\begin{equation}\label{comDiag2}
\centering
\begin{tikzcd}
 \Hom_1(\T^2,\Z) \arrow{rr}{\Phi'_{*}}  \arrow{dr}[swap]{(\pi_G)_*} & & \Hom_1(\T^2,\Z)    \arrow{dl}{(\pi_G)_*}\\
 & \Hom_1(\Gamma_G,  \Z)\,, &
\end{tikzcd}
\end{equation}
where $\pi_G \colon \T^2 \to \Gamma_G$ is the natural projection. Let $a \in  \Hom_1(\T^2,\Z) $ be a cycle homologous to a connected component of a regular $G$-level, and let $b \in  \Hom_1(\T^2,\Z)$ be any cycle such that $(a,b)$ is a basis of $ \Hom_1(\T^2,\Z)$. 
Then we can find a basis element $c \in  \Hom_1(\Gamma_G,  \Z) $ such that
$$
\pi_G^*(a) = 0, \quad \pi_G^*(b) = c.
$$
Taking into account that $\Phi'$ is orientation-preserving, we see from diagram \eqref{comDiag2} that 
$$
\Phi'_*(a) = a, \quad \Phi'_*(b) = b + ma
$$
where $m \in \Z$. Now, we claim that there exists a symplectomorphism $\Psi \colon \T^2 \to \T^2$ such that $\Psi_*G = G$, and
$$
\Psi_*(a) = a, \quad \Psi_*(b) = b - ma.
$$
Indeed, such a symplectomorphism can be constructed as a suitable power of the Dehn twist about any connected component of a regular $G$-level.\par
Now, set $\Phi = \Psi \circ \Phi'$. Clearly, $\Phi$ is a symplectic diffeomorphism, and $\Phi_*F = G$. Furthermore, $\Phi$ is identical in homology. For a torus, this implies that $\Phi$ is isotopic to the identity (see e.g. Theorem 2.5 of \cite{MCG}), as desired.\par
Now, prove the second statement. By Theorem \ref{thm1}, there exists a simple Morse function $F' \colon \T^2 \to \R$ such that the measured Reeb graph associated with $F'$ is $\Gamma$. A priori, the map $(\pi_{F'})_*  \colon \Hom_1(\T^2,\Z) \to \Hom_1(\Gamma, \Z)$ does not coincide with the prescribed freezing homomorphism $\pi_*  \colon\Hom_1(\T^2,\Z) \to \Hom_1(\Gamma, \Z)$. However, we may find a symplectic map $\Psi \colon \T^2 \to \T^2$ such that $\pi_* \circ \Psi_* = (\pi_{F'})_*$. Indeed, we can find an orientation preserving diffeomorphism with this property, and, by Moser's trick, there exists a symplectic diffeomorphism in each isotopy class of orientation preserving diffeomorphisms. Now, taking $F = \Psi_*F'$, we obtain a function with desired properties.
\end{proof}

\begin{remark}
{\rm
The consideration in this section works for a surface $M$ of any genus, 
and it classifies functions up to 
symplectomorphisms trivially acting in the homology of $M$. The fact that symplectomorphisms
trivial on homology must be isotopic to the identity holds only for genus not greater than one, see
\cite{MCG}. For the $\SDiff_0(M)$-classification in higher genera one needs to incorporate finer tools and we consider them in the next section.
}
\end{remark}


\medskip

\subsection{Reduced Reeb graphs and pants decompositions}
In order for Theorem \ref{Sdiff0Torus} to hold for higher genera, we need to modify the definition 
of freezing.
Let $M$ be a closed connected two-dimensional manifold of genus $\varkappa \geq 2$, and let $F \colon M \to \R$ be a simple Morse function on $M$. Let also $\Gamma_F$ be the Reeb graph of $F$. Take an edge $e \subset \Gamma_F$, and let $C(e) = \pi^{-1}(x_e)$, 
where $x_e \in e$ is any interior point (clearly, the isotopy class of $C(e)$ is independent of the choice of interior point $x_e \in e$). Note that some of the cycles $C(e)$ are isotopic to each other, and some are contractible. To keep only non-isotopic cycles and get rid of redundant ones, we make use of a construction by Hatcher and Thurston \nolinebreak \cite{HT}, which associates a \textit{pants decomposition} of $M$ to each simple Morse function on $M$. \par

\begin{definition}
{\rm
Let $ \Gamma'_F$ be the maximal subgraph of $\Gamma_F$ with no $1$-valent vertices (equivalently, the minimal subgraph to which $\Gamma_F$ retracts).  The \textit{reduced Reeb graph} $ \bar{\Gamma}_F$ is defined by disregarding all bivalent vertices of $  \Gamma'_F $ (see Figure \ref{pretzel0}).
 There is a natural projection $M \to \bar \Gamma_F$ constructed as follows. The graph $\Gamma_F$ can be presented as 
$$\Gamma_F =  \Gamma'_F \cup T_1 \cup \dots \cup T_m$$
where $T_1, \dots, T_m \subset \Gamma_F$ are pairwise disjoint trees, $T_i \cap   \Gamma'_F = \{v_i\}$, and $v_1, \dots, v_m$ are bivalent vertices of \nolinebreak $ \Gamma'_F$. 
The mapping $r \colon \Gamma_F \to \Gamma'_F \simeq  \bar \Gamma_F$ that is identical on  $  \Gamma'_F $ and maps $T_i$ to $v_i$ is a deformation retraction.
A projection $M \to \bar \Gamma_F$ is defined by composing the projection  $\pi \colon M \to \Gamma_F$ with the retraction mapping \nolinebreak $r$. We shall denote the projection $M \to \bar \Gamma_F$  by the same letter $\pi$.
}
\end{definition}
\par
Let $e$ be an edge of $  \bar \Gamma_F$, and let $x \in e$ be its interior point. We say that $x$ is regular if it is not a bivalent vertex of $  \Gamma'_F $.

\begin{proposition}
The reduced Reeb graph $  \bar \Gamma_F$  has the following properties.
\begin{longenum}
\item  $  \bar \Gamma_F$ is homotopy equivalent to $  \Gamma_F $.
\item All vertices of $  \bar \Gamma_F$ are $3$-valent. The number of vertices is $2\varkappa - 2$, and the number of edges is $3\varkappa - 3$ where $\varkappa$ is the genus of $M$.
\item Let $e$ be an edge of $  \bar \Gamma_F$. Then for all regular interior points $x_e \in e$, the set $\pi^{-1}(x_e) \subset M$ is an embedded circle. Moreover, the isotopy class of $\pi^{-1}(x_e)$ is non-trivial and does not depend on the choice of a regular $x_e \in e$. 
\item Let $e_1,e_2$ be two distinct edges of $  \bar \Gamma_F$, and let $x_{1} \in e_1$ and $x_{2}\in e_2$ be regular interior points. Then the isotopy classes of $\pi^{-1}(x_{1})$ and $\pi^{-1}(x_{2})$ are distinct.
\end{longenum}
\end{proposition}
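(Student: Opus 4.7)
The plan is to prove the four statements in order, using the preceding proposition on $\Gamma_F$ (valences $1$ or $3$, first Betti number equal to the genus of $M$) and the Hatcher--Thurston construction \cite{HT} of pants decompositions from simple Morse functions.

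For (i), I would simply observe that the retraction $r \colon \Gamma_F \to \bar \Gamma_F$ described just above the proposition is a deformation retraction: it is the identity on $\Gamma'_F$ and contracts each pendant tree $T_i$ to the vertex $v_i$, which is possible because each $T_i$ is contractible. For (ii), I would combine the valence statement for $\Gamma_F$ with the construction of $\bar \Gamma_F$. Iteratively removing $1$-valent vertices (and their incident edges) from $\Gamma_F$ produces $\Gamma'_F$ whose vertices have valence $2$ or $3$; disregarding the bivalent ones leaves $\bar \Gamma_F$ with only $3$-valent vertices. By (i), $b_1(\bar \Gamma_F) = b_1(\Gamma_F) = \varkappa$. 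With $V$ vertices and $E$ edges, the handshaking lemma gives $2E = 3V$, and connectivity gives $E - V + 1 = \varkappa$; solving yields $V = 2\varkappa - 2$ and $E = 3\varkappa - 3$.

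For (iii), a regular interior point $x_e$ of $e \subset \bar \Gamma_F$ is, by the definition of regular, an interior point of some edge of $\Gamma_F$, so the earlier proposition gives that $\pi^{-1}(x_e)$ is an embedded circle. To show that the isotopy class is non-trivial, I would argue by contradiction: if $C = \pi^{-1}(x_e)$ bounded a disk $D \subset M$, then $F|_D$ would attain an extremum at an interior critical point corresponding to a $1$-valent vertex of $\Gamma_F$, and the monotone path in $\Gamma_F$ joining $x_e$ to this vertex would have to lie entirely in a pendant tree $T_i$, placing $x_e$ in $T_i \cup \{v_i\}$, which contradicts regularity. For independence of the isotopy class from the choice of $x_e$, I would note that two regular choices either lie in a single edge of $\Gamma_F$, where the preimages cobound a cylinder, or they are separated by one or more bivalent vertices $v_i$ of $\Gamma'_F$; across each such $v_i$, the $3$-valent vertex of $\Gamma_F$ contributes a pair of pants in $M$, one of whose boundary circles is the boundary of the disk $\pi^{-1}(T_i)$, so the remaining two boundary circles (the preimages on either side of $v_i$ in $\Gamma'_F$) cobound an annulus in the pair of pants capped by $\pi^{-1}(T_i)$ and are therefore isotopic.

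For (iv), I would invoke the Hatcher--Thurston result \cite{HT}: the collection $\{\pi^{-1}(x_e)\}_{e}$ over all edges of $\bar \Gamma_F$ cuts $M$ into pairs of pants, one for each $3$-valent vertex of $\bar \Gamma_F$, and the cutting curves of a pants decomposition are by construction pairwise non-isotopic. Alternatively, one can argue directly: if two such circles were isotopic, they would cobound an annulus in $M$, whose projection to $\bar \Gamma_F$ would force an incompatibility with the combinatorial separation of $e_1$ and $e_2$. The main obstacle I anticipate is carrying out the argument in (iii) with care, verifying that the disk-capping observation across a single bivalent vertex composes consistently along chains of such vertices to yield a single well-defined isotopy class, rather than merely a homological equality; once this is established, the remaining pieces reduce to standard Morse-theoretic bookkeeping and a direct appeal to \cite{HT}.
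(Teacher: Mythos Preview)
The paper's own proof of this proposition is the single line ``The proof is straightforward,'' so your detailed argument is necessarily more than what the paper provides. Parts (i), (ii), and (iv) are handled correctly: the deformation retraction, the handshaking/Euler-characteristic count, and the appeal to Hatcher--Thurston are exactly the standard justifications the paper implicitly relies on.

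There is, however, a small gap in your non-triviality argument in (iii). You write that the monotone path from $x_e$ to the $1$-valent vertex (the extremum of $F|_D$) ``would have to lie entirely in a pendant tree $T_i$,'' and conclude $x_e \in T_i \cup \{v_i\}$. This does not follow: the $1$-valent vertex certainly sits in some $T_i$, but the path from $x_e$ to it can pass through $\Gamma'_F$ before entering $T_i$ at $v_i$, so you have not pinned $x_e$ down. The cleaner argument is to observe that if $C$ bounds a disk $D$, then $C$ is separating, so $x_e$ disconnects $\Gamma_F$; the component $\pi(D)$ has first Betti number equal to the genus of $D$, namely zero, so one side of $\Gamma_F \setminus \{x_e\}$ is a tree. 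But $x_e$ lies in $\Gamma'_F$, the maximal subgraph with no $1$-valent vertices, and removing an interior edge point from such a graph leaves each component with at least one cycle (otherwise that component would be a tree with a leaf that is a $1$-valent vertex of $\Gamma'_F$). Attaching the pendant trees $T_j$ back on cannot destroy these cycles, so neither side of $\Gamma_F \setminus \{x_e\}$ is a tree --- a contradiction. With this correction your argument goes through, and your treatment of the isotopy-class independence across bivalent vertices (capping by the disk $\pi^{-1}(T_i)$) is fine.
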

\begin{proof}
The proof is straightforward.
\end{proof}
  \begin{figure}[t]
\centerline{
\begin{tikzpicture}[thick, scale=1.2, rotate = -90]
\draw [->] (4,0.8) -- (2.2,0.8);
\draw (2,2.7)  .. controls (2.6, 2.7) and (3, 2.3)  ..  (3.2,2.3);
\draw (2,1.3)  .. controls (2.6, 1.3) and (3, 1.7)  ..  (3.2,1.7);
\draw (2,2.7)  .. controls (0.8,2.7) and (0.8, 1.3)  ..  (2,1.3);
\draw   (1.63,2) arc (135:45:0.7cm);
\draw   (1.63,2) arc (-135:-35:0.7cm);
\draw     (1.63,2) arc (60:120:0.53cm);
\draw  [densely dotted](1.63,2) arc (-60:-120:0.53cm);
\draw [densely dotted]   (3.2,2.3) .. controls (3.1, 2.1) and  (3.1, 1.9) ..  (3.2,1.7);
\draw  (3.2,2.3) .. controls (3.3, 2.1) and  (3.3, 1.9) ..  (3.2,1.7);
\draw (4.4,2.7)  .. controls (3.8, 2.7) and (3.4, 2.3)  ..  (3.2,2.3);
\draw (4.4,1.3)  .. controls (3.8, 1.3) and (3.4, 1.7)  ..  (3.2,1.7);
\draw (4.4,2.7)  .. controls (5.6,2.7) and (5.6, 1.3)  ..  (4.4,1.3);
\draw   (4.77,2) arc (45:135:0.7cm);
\draw   (4.77,2) arc (-45:-145:0.7cm);
\draw     (4.77,2) arc (120:60:0.53cm);
\draw  [densely dotted](4.77,2) arc (-120:-60:0.53cm);
\node [vertex] at (2.6,6) (nodeB) {};
\node [vertex] at (3.7,6) (nodeC) {};
\node [vertex] at (2.6,4.2) (nodeA) {};
\node [vertex] at (3.7,4.2) (nodeD) {};
\node [vertex] at (1.6,4.2) (nodeE) {};
\node [vertex] at (4.7,4.2) (nodeF) {};
\node [vertex] at (1.1,4.2) (nodeH) {};
\node [vertex] at (5.2,4.2) (nodeK) {};
\draw (nodeB)  .. controls +(-1.3,-0.7) and +(-1.3,0.7) ..  (nodeB);
\draw (nodeC)  .. controls +(1.3,-0.7) and +(1.3,0.7) ..  (nodeC);
\draw[->-] (nodeA)  .. controls +(-0.2,-0.3) and +(0.2,-0.3) ..  (nodeE);
\draw[->-] (nodeA)  .. controls +(-0.2,0.3) and +(0.2,0.3) ..  (nodeE);
\draw[-<-] (nodeD)  .. controls +(0.2,-0.3) and +(-0.2,-0.3) ..  (nodeF);
\draw[-<-] (nodeD)  .. controls +(0.2,0.3) and +(-0.2,0.3) ..  (nodeF);
\draw (nodeB) -- (nodeC);
\draw [->-](nodeD) -- (nodeA);
\draw [->-](nodeK) -- (nodeF);
\draw [->-](nodeE) -- (nodeH);
\node at (1.5,2.3)(nodeG) { \small $C_1$};
\node at (3.2,2.6)(nodeI) { \small $C_2$};
\node at (4.9,2.3)(nodeJ) { \small $C_3$};
\fill (nodeB) circle [radius=1.5pt];
\fill (nodeC) circle [radius=1.5pt];
\fill (nodeA) circle [radius=1.5pt];
\fill (nodeD) circle [radius=1.5pt];
\fill (nodeE) circle [radius=1.5pt];
\fill (nodeF) circle [radius=1.5pt];
\fill (nodeH) circle [radius=1.5pt];
\fill (nodeK) circle [radius=1.5pt];
\node at (4.6,3) (nodeL) {$M$};
\node at (3.1,0.6) (nodeL) {$F$};
\node at (3.1,4.6) (nodeL) {$\Gamma_F$};
\node at (3.1,6.4) (nodeL) {$\bar \Gamma_F$};
\end{tikzpicture}
}
\caption{Reeb graph, reduced Reeb graph and pants decomposition for a height function on a pretzel.}\label{pretzel0}
\end{figure}
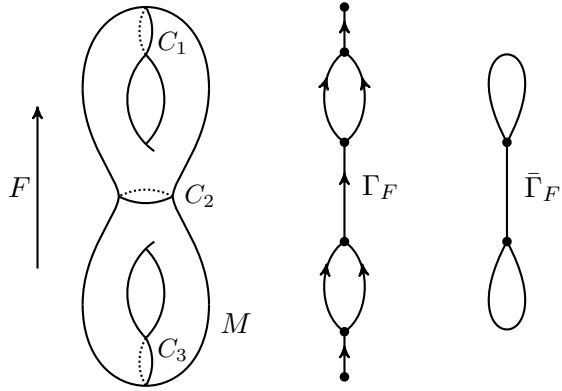
This way we obtain a collection $C_1, \dots, C_{3\varkappa-3}$  of pairwise disjoint nontrivial distinct isotopy classes of simple closed curves. It is well known that such cycles decompose $M$ into $2\varkappa-2$ manifolds with boundary $P_1$, $\dots$, $P_{2\varkappa-2}$, and that each $P_i$ is homeomorphic to a pair of pants, i.e. a sphere with three holes. For this reason, a choice of  $3\varkappa-3$ nontrivial pairwise disjoint distinct isotopy classes of simple closed curves is called a \textit{pants decomposition} of $M$. Pants decomposition are also known as \textit{maximal cut systems}. 
Thus, to each simple Morse function $F$ on $M$ we associate a pants decomposition $\pazocal P_F$ of $M$. 
\begin{example}
{\rm
Figure \ref{pretzel0} shows a height function on a pretzel, as well as its Reeb graph, reduced Reeb graph and the associated pants decomposition.
}
\end{example}

For each pants decomposition $\pazocal P$, there is an associated graph $\Gamma(\pazocal P) $. This graph is defined as follows: the vertices of this graph are pairs of pants  $P_1, \dots, P_{2\varkappa-2}$. Two vertices $v_i$ and $v_j$ are joined by an edge if the pairs  of pants $P_i$ and $P_j$ have a common boundary component.
%
%
  \begin{figure}[t]
\centerline{
\begin{tikzpicture}[thick, scale=0.8]
\draw   (1.68,3.2) arc (60:300:1.46cm);
\draw   (2.32,3.2) arc (120:-120:1.46cm);
\draw (1.68,3.2)  .. controls (1.9, 3.1) and (2.1, 3.1) ..  (2.32,3.2) ;
\draw (1.68,0.67)  .. controls (1.9, 0.77) and (2.1, 0.77) ..  (2.32,0.67) ;
    \draw   (1,1.3) arc (260:100:0.6cm);
    \draw   (0.8,1.4) arc (-80:80:0.5cm);
        \draw   (3.25,1.3) arc (260:100:0.6cm);
    \draw   (3.05,1.4) arc (-80:80:0.5cm);
\draw    [densely dotted] (2.75, 1.95) arc (65:115:1.85cm);
\draw   (2.75, 1.95) arc (-65:-115:1.85cm);
\draw   [densely dotted] (4.5, 1.95) arc (60:120:1cm);
\draw (4.5, 1.95) arc (-60:-120:1cm);
\draw   [densely dotted]  (0.5, 1.95) arc (60:120:1cm);
\draw  (0.5, 1.95) arc (-60:-120:1cm);
\node [vertex]  at (8,1) (nodeD) {};
\node [vertex] at (8,3) (nodeE) {};
\draw   (nodeD) -- (nodeE);
\fill (nodeD) circle [radius=1.5pt];
\fill (nodeE) circle [radius=1.5pt];
\draw   (nodeD) arc (-45:45:1.44cm);
\draw   (nodeD) arc (225:135:1.44cm);
\end{tikzpicture}
}
\caption{A pants decomposition $\pazocal P$ of a pretzel and the associated graph  
$\Gamma(\pazocal P)$.}\label{pretzel}
\end{figure}
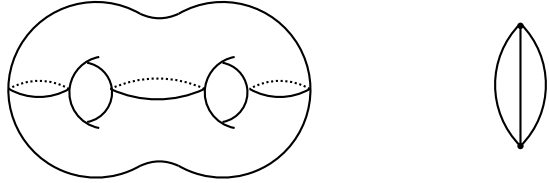
In other words, there exist bijections
\begin{align*}
C \colon \{ \mbox{edges of } \Gamma(\pazocal P)\} \to \{ \mbox{cycles of }\pazocal P  \},  \quad P \colon \{ \mbox{vertices of } \Gamma(\pazocal P)\} \to \{ \mbox{pairs of pants of }\pazocal P  \} \,, 
\end{align*}
such that a vertex $v$ of the graph $ \Gamma(\pazocal P)$ is adjacent to an edge $e$ if and only if the circle $C(e)$ is a boundary component of the pair of pants $ P(v)$. Note that if the graph $ \Gamma(\pazocal P)$ admits non-trivial automorphisms, then there exist different maps $C,P$ with these properties. This motivates us to give the following definition.
\begin{definition}\label{color}
{\rm
A {\it colored pants decomposition} of a surface $M$ is a quadruple $(\pazocal P, \Gamma, C, P)$ where $\pazocal P$ is a pants decomposition of $M$, $\Gamma$ is a $3$-valent graph, and $C,P$ are bijections
\begin{align*}
C \colon \{ \mbox{edges of } \Gamma(\pazocal P)\} \to \{ \mbox{cycles of }\pazocal P  \},  \quad P \colon \{ \mbox{vertices of } \Gamma(\pazocal P)\} \to \{ \mbox{pairs of pants of }\pazocal P  \} \,, 
\end{align*}
such that a vertex $v$ of the graph $\Gamma$ is adjacent to an edge $e$ if and only if the circle $C(e)$ is a boundary component of the pair of pants $ P(v)$.
}
\par
Two colored pants decompositions  $(\pazocal P_1, \Gamma_1, C_1, P_1)$ and $(\pazocal P_2, \Gamma_2, C_2, P_2)$ are isomorphic if the pants decompositions coincide 
($\pazocal P_1=\pazocal P_2$) and 
there exists an isomorphism of graphs $\phi \colon \Gamma_1 \to \Gamma_2$, such that $P_1 = P_2 \circ \phi $, and $C_1 = C_2 \circ \phi $. Note that the map $C$ entering this definition
 uniquely determines the map $P$ with the only exception: unless $\pazocal P$ and $\Gamma$ are the ones depicted in Figure \ref{pretzel}.
\end{definition}

Clearly, any pants decomposition $\pazocal P$ can be viewed as a pants decomposition 
colored by its graph $\Gamma(\pazocal P)$. Therefore, when we say that a pants decomposition 
$\pazocal P$ is colored by a graph $\Gamma$, this means that $\Gamma$ is isomorphic to 
$\Gamma(\pazocal P)$, and that the isomorphism between $\Gamma$ and 
$\Gamma(\pazocal P)$ is fixed.
\par
 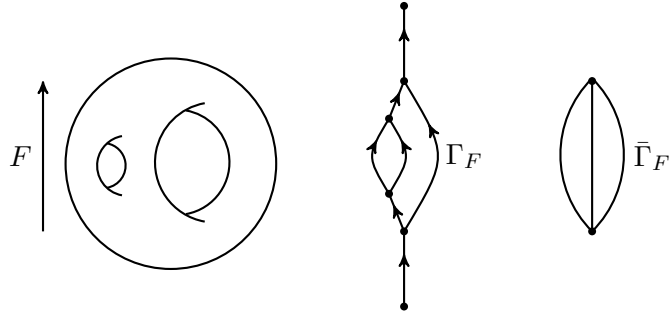
\begin{figure}[t]
\centerline{
\begin{tikzpicture}[thick]
    \draw (2.4,1.9) ellipse (1.4cm and 1.4cm);
    \draw   (2.85,1.13) arc (260:100:0.8cm);
    \draw   (2.6,1.23) arc (-80:80:0.7cm);
    \draw   (1.75,1.48) arc (260:100:0.4cm);
    \draw   (1.55,1.58) arc (-80:80:0.3cm);
    %
    \draw  [->] (0.7,1) -- (0.7,3);
    \node at (0.4,2) (nodeA) {$F$};
    \node at (6.3,2) (nodeB) {$\Gamma_F$};
    \node [vertex] at (5.5,0) (nodeC) {};
    \node [vertex]  at (5.5,1) (nodeD) {};
    \node [vertex] at (5.5,3) (nodeE) {};
    \node [vertex]  at (5.5,4) (nodeF) {};
    \node [vertex]  at (5.3,1.5) (nodeG) {};    
        \node [vertex]  at (5.3,2.5) (nodeH) {};    
    \draw  [->-] (nodeC) -- (nodeD);
    \fill (nodeC) circle [radius=1.5pt];
    \fill (nodeD) circle [radius=1.5pt];
    \fill (nodeE) circle [radius=1.5pt];
    \fill (nodeF) circle [radius=1.5pt];
        \fill (nodeG) circle [radius=1.5pt];
            \fill (nodeH) circle [radius=1.5pt];
    \draw  [->-] (nodeE) -- (nodeF);
    \draw[->-] (nodeG)  .. controls +(-0.3,+0.5) ..  (nodeH);
        \draw[->-] (nodeG)  .. controls +(0.3,+0.5) ..  (nodeH);
                \draw[->-] (nodeD) -- (nodeG);
        \draw[->-] (nodeH) -- (nodeE);
     \draw[->-] (nodeD)  .. controls +(0.6,+1) ..  (nodeE);       
\node [vertex]  at (8,1) (nodeI) {};
\node [vertex] at (8,3) (nodeJ) {};
\draw   (nodeI) -- (nodeJ);
\fill (nodeI) circle [radius=1.5pt];
\fill (nodeJ) circle [radius=1.5pt];
\draw   (nodeI) arc (-45:45:1.44cm);
\draw   (nodeI) arc (225:135:1.44cm);
    \node at (8.8,2) (nodeK) {$\bar \Gamma_F$};
\end{tikzpicture}
}
\caption{A height function on a pretzel whose reduced Reeb graph has no simple loops.}\label{pretzel2}
\end{figure}
Clearly, the graph of the pants decomposition $\pazocal{P}_F$ associated with a Morse function $F$ has a natural structure of a pants decomposition colored by the reduced Reeb graph  $  \bar \Gamma_F$.
The colored pants decomposition $(\pazocal P_F, \bar \Gamma_F)$, together with the measured Reeb graph $\Gamma_F$ is invariant under the $\SDiff_0(M)$-action on simple Morse functions
on $M$. If the reduced Reeb graph $\bar \Gamma_F$ has no simple loops (see e.g. Figure \ref{pretzel2}), then it turns out that there are no other invariants. Otherwise, there are additional invariants associated to each of the loops. These invariants are constructed as follows.\par

  \begin{figure}[t]
\centerline{
\begin{tikzpicture}[thick, scale=1.5, rotate = -90]
\draw (2,2.7)  .. controls (2.6, 2.7) and (3, 2.2)  ..  (3.2,2.3);
\draw (2,1.3)  .. controls (2.6, 1.3) and (3, 1.8)  ..  (3.2,1.7);
\draw (2,2.7)  .. controls (0.8,2.7) and (0.8, 1.3)  ..  (2,1.3);
\draw   (1.63,2) arc (135:45:0.7cm);
\draw   (1.63,2) arc (-135:-35:0.7cm);
\draw     (1.63,2) arc (60:120:0.53cm);
\draw  [densely dotted](1.63,2) arc (-60:-120:0.53cm);
\draw   (3.2,2.3) .. controls (3.3, 2.1) and  (3.3, 1.9) ..  (3.2,1.7);
\draw   (3.2,2.3) .. controls (3.1, 2.1) and  (3.1, 1.9) ..  (3.2,1.7);
\node  at (2.3,3.7) (nodeF) {\Huge $\rightarrow$};
\node  at (2.5,3.7) (nodeD) { $\pi$};
\node [vertex] at (2.5,5) (nodeB) {};
\node [vertex] at (3,5) (nodeC) {};
\draw [a] (nodeB)  .. controls +(-1.2,-1.5) and +(-1.2,1.5) ..  (nodeB);
\draw  (nodeC) -- (nodeB);
\node at (2.35,5) (nodeC) {$v$};
\node at (2,5.5) (nodeE) {$e$};
\node at (1,2.3)(nodeG) { \small $C(e)$};
\node at (2.8,5.15) (nodeH) {$e'$};
\node at (3.2,2.6)(nodeI) { \small $C(e')$};
\node at (2.5,1.1)(nodeJ) { \small $P(v)$};
\fill (nodeB) circle [radius=1.5pt];
\draw [->]  (1.35,2.06) -- (1.35, 2.35) {};
\end{tikzpicture}
}
\caption{Pair of pants corresponding to a loop.}\label{loop}
\end{figure}
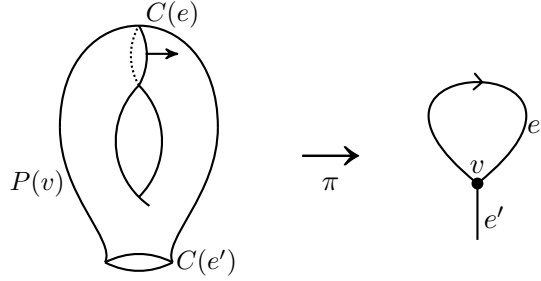


Assume that $ e$ is a loop in $\bar \Gamma_F$, 
i.e., an edge joining some vertex $v$ with itself. Then the pair of pants $P(v)$ is embedded into the surface $M$ as depicted in Figure \ref{loop}.  Choose an arbitrary orientation of the loop $e$. Then, using the projection $\pi \colon M \to \bar \Gamma_F$, one can lift this orientation to a coorientation of the cycle $C(e)$ (see Figure \ref{loop}). Since the surface $M$ is oriented, a coorientation of the cycle $C(e)$ canonically defines an orientation of this cycle. This way, we obtain a bijective mapping
$$
\mathrm{ht_e} \colon \{ \mbox{orientations of }e\} \to  \left\{\mbox{orientations of } C(e)\right\}.
$$
Since there are two such bijections, the invariant $\halfTwist_e$ can take two values.
\begin{definition}
The map $\halfTwist_e$ is called the \textit{half-twist invariant} associated with the loop \nolinebreak $e$. 
\end{definition}
Existence of half-twist invariants is related to the presence of so-called half twists in the automorphism group of a pants decomposition (see the next section).

Note that one has to consider the invariant $\mathrm{ht_e} $ for each loop $e$ in the reduced graph $\bar \Gamma_F$, so that there are exactly $2^k$ possible values of this invariant, where 
$k$ is the number of loops in $\bar \Gamma_F$ for a fixed colored pants decomposition.

\par
\begin{remark}
{\rm
More formally, the half-twist invariant can be defined as the isomorphism
\begin{align*}
\Hom_1\left({ P(v) \cup C(e)}, \Z\right) \, / \,\, \Z\,[C(e)]  \to  \Hom_1\left( e \cup v, \Z\right).
\end{align*}
induced by the projection $\pi$. Since both groups are isomorphic to $\Z$, there are two such isomorphisms, and the half-twist invariant may take two values.
}
\end{remark}

As we show below, a complete list of invariants of the $\SDiff_0(M)$ action on simple Morse function consists of a measured Reeb graph, colored pants decomposition, and half-twist invariants for each of the loops in the reduced Reeb graph.

\medskip

\subsection{Action of the mapping class group on pants decompositions}
Let $M$ be a closed connected surface of genus $\varkappa \geq 2$, and let $\mathcal P(M)$ be the set of all possible pants decompositions of $M$, considered up to isotopy. Then there is a natural action of the mapping class group $\MCG(M)$ on the set $\mathcal P(M)$. 
The following description of  orbits of this action can be found, e.g., in \cite{Putman, Wolf}.

\begin{theorem}\label{wolfThm}
Two pants decompositions $\pazocal P_1$ and $\pazocal P_2$ belong to the same orbit of the $\MCG(M)$ action if and only if the associated graphs $\Gamma(\pazocal P_1)$ and $\Gamma(\pazocal P_2)$ are isomorphic. Moreover, any isomorphism $\phi \colon \Gamma(\pazocal P_1) \to \Gamma(\pazocal P_2)$ gives rise to a certain mapping class $\Phi \in \MCG(M)$.
\end{theorem}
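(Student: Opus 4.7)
The forward direction is essentially tautological. If $\Phi \in \MCG(M)$ maps $\pazocal P_1$ to $\pazocal P_2$, then $\Phi$ sends isotopy classes of curves in $\pazocal P_1$ bijectively to those in $\pazocal P_2$, and sends the connected components of $M \setminus \pazocal P_1$ (the pairs of pants of $\pazocal P_1$) bijectively to those of $M\setminus \pazocal P_2$. Adjacency is preserved because $\Phi$ is a homeomorphism. Taken together, these data define the required isomorphism $\Gamma(\pazocal P_1)\to\Gamma(\pazocal P_2)$.

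The non-trivial direction is the converse: given a graph isomorphism $\phi$, construct a representative of a mapping class. The plan is to build $\Phi$ pair-of-pants by pair-of-pants and then glue. First, I would fix a "model" pair of pants $P_0$ (a sphere with three open disks removed) once and for all. Each component $P_i$ of $M\setminus \pazocal P_1$ is orientation-preservingly homeomorphic to $P_0$, and similarly each $P_j'$ of $M\setminus \pazocal P_2$, because any two orientable pairs of pants are homeomorphic. The graph isomorphism $\phi$ prescribes how the pieces $P_i$ of $\pazocal P_1$ correspond to the pieces $P_j'$ of $\pazocal P_2$, and how their boundary components correspond. Using the elementary fact that the orientation-preserving mapping class group of $P_0$ rel $\partial P_0$ surjects onto the symmetric group $S_3$ of permutations of the three boundary components, I can produce an orientation-preserving homeomorphism $\Phi_i \colon P_i \to \phi(P_i)$ realizing the bijection on boundary circles that $\phi$ demands.

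Next comes the gluing step, which is the main obstacle. The $\Phi_i$'s induce, for each curve $c \in \pazocal P_1$ separating pieces $P_i$ and $P_k$ in $M$, two self-homeomorphisms of the image curve $\phi(c)\subset M$, one coming from each side. These need not agree, but they do agree up to isotopy of $\phi(c)$ as a circle; namely, they differ by some element of the mapping class group of the circle, i.e.\ by an integer (once one orients things). The standard way around this is to thicken each curve $c$ to an annular collar neighborhood inside $M$, absorb the mismatch into a Dehn twist power supported in that collar, and then verify that the pieces glue to a global homeomorphism $\Phi\colon M\to M$. This $\Phi$ is orientation-preserving, maps $\pazocal P_1$ onto $\pazocal P_2$, and its mapping class induces $\phi$ on graphs.

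I would also remark that the resulting $\Phi$ is only well defined modulo the subgroup of $\MCG(M)$ generated by Dehn twists along the curves of $\pazocal P_1$ (and half-twists in pants with two boundary components identified in $M$, should any appear), but for the statement of the theorem this ambiguity is irrelevant: what matters is that \emph{some} mapping class $\Phi$ realizing $\phi$ exists. Detailed references for the gluing step are provided in the cited works \cite{Putman, Wolf}.
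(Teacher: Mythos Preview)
The paper does not prove this theorem; it is quoted as a known result with a reference to \cite{Putman, Wolf}. So there is no ``paper's own proof'' to compare against. Your cut-and-paste argument is the standard one found in those references and is correct in outline: match pairs of pants and boundary circles as dictated by $\phi$, pick orientation-preserving homeomorphisms on each piece realizing the prescribed boundary bijection, and glue across collar neighbourhoods.

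One point is garbled, however. You write that the two restrictions to a common curve ``differ by some element of the mapping class group of the circle, i.e.\ by an integer.'' The orientation-preserving mapping class group of $S^1$ is \emph{trivial}, not $\Z$: any two orientation-preserving self-homeomorphisms of a circle are isotopic. Consequently there is no integer-valued obstruction to gluing; the two boundary restrictions are always isotopic, and an isotopy supported in a collar makes them agree on the nose. The $\Z$ you have in mind is the Dehn-twist ambiguity in the \emph{resulting} mapping class $\Phi$, which arises because the isotopy used to match the boundary maps is not canonical --- different choices of collar isotopy yield global homeomorphisms differing by powers of the Dehn twist about $\phi(c)$. You correctly identify this ambiguity in your final paragraph, but it should not be described as ``absorbing a mismatch'': there is nothing to absorb, only a choice to make. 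With that clarification (and the parallel remark that ``rel $\partial P_0$'' should be dropped when you want to permute boundary components), the argument is sound.
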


Let $\pazocal P$ be a pants decomposition of $M$ given by 
non-oriented cycles  $\{ C_1, \dots, C_{3\varkappa-3}\}$. Following Wolf \cite{Wolf}, we define 
the pointwise stabilizer of $\pazocal P$ as the set of mapping classes which map 
every cycle $C_i$ to itself:
$$
\Stab_{pw}(\pazocal P) := \{ \Phi \in \MCG(M) ~|~\Phi(C_i) = C_i \mbox{ for all } C_i \in \pazocal P\}.
$$
  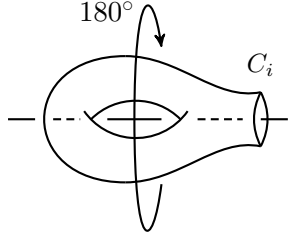
\begin{figure}[t]
\centerline{
\begin{tikzpicture}[thick, scale=1.2]
\draw (2,2.7)  .. controls (2.6, 2.7) and (3, 2.2)  ..  (3.5,2.3);
\draw (2,1.3)  .. controls (2.6, 1.3) and (3, 1.8)  ..  (3.5,1.7);
\draw (2,2.7)  .. controls (0.8,2.7) and (0.8, 1.3)  ..  (2,1.3);
\draw   (2.61,2) arc (45:135:0.7cm);
\draw   (2.69,2.09) arc (-35:-145:0.7cm);
\draw   (3.5,2.3) .. controls (3.6, 2.1) and  (3.6, 1.9) ..  (3.5,1.7);
\draw   (3.5,2.3) .. controls (3.4, 2.1) and  (3.4, 1.9) ..  (3.5,1.7);
\draw[->](2.1,2)  .. controls (2.1, 3.5) and (2.3, 3.5) ..  (2.4,2.8);
\draw(2.1,2)  .. controls (2.1, 0.5) and (2.3, 0.5) ..  (2.4,1.28);
\draw (1.8,2) -- (2.4,2); 
\draw [densely dashed]  (2.8,2) -- (3.3,2); 
\draw  (3.5,2) -- (3.8,2); 
\draw (0.7,2) -- (1,2); 
\draw [densely dashed]  (1.2,2) -- (1.5,2); 
\node at (1.8, 3.2) (nodeA) {$180^{\circ}$};
\node at (3.5,2.6)(nodeB) { \small $C_i$};
\end{tikzpicture}
}
\caption{Half twist about a genus-$1$-separating curve.}\label{halftwist}
\end{figure}
A cycle $C_i$ is called \textit{genus-$1$-separating} if $M \setminus C_i = M_1 \sqcup M_2$ and either $M_1$ or $M_2$ has genus one, i.e. it is a torus with a hole, see Figure \ref{loop}.
For each genus-$1$-separating cycle $C_i$, there is an associated half twist, that is a mapping class which twists a genus-one component of the complement $M \setminus C_i$ by $180$ degrees and is isotopic to the identity on the second component, see Figure \ref{halftwist} ({also remark that the square of a half twist is a Dehn twist).}
Note that such separating cycles are in one-to-one correspondence with loops in 
$\Gamma(\pazocal P)$, provided that not both $M_1$ and $M_2$ are of genus one. 
We also note that if both $M_1$ and $M_2$ have genus one (which is only possible if $M$ has genus two and the pants decomposition is the one depicted in Figure \ref{pretzel0}), then there are two distinct half twists about the curve $C_i$: one twisting $M_1$, and the other twisting $M_2$. 
Thus for an arbitrary surface $M$ of genus $\ge 2$ and its pants decomposition without exception there
is a one-to-one correspondence between half twists and loops in $\Gamma(\pazocal P)$.

Now without loss of generality, assume that the curves $C_1, \dots, C_k$ are genus-$1$-separating, while the curves $C_{k+1}, \dots, C_{3\varkappa - 3}$ are not. We refer to
\cite{Wolf} for the following result on the structure of the stabilizer subgroup.

\begin{lemma}\label{Wolf2}
Assume that $\varkappa \geq 3$. Then the pointwise stabilizer $\Stab_{pw}(\pazocal P)$ is generated by half twists about the curves $C_1, \dots, C_k$
and Dehn twists about the curves $C_{k+1}, \dots, C_{3\varkappa - 3}$.
\end{lemma}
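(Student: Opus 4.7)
The plan is to analyze $\Phi \in \Stab_{pw}(\pazocal P)$ by cutting $M$ along the curves $C_1, \dots, C_{3\varkappa-3}$ and tracking the action on the resulting collection of pairs of pants. First, since each $C_i$ is preserved as an isotopy class, a standard argument (using that the $C_i$ form a disjoint system of essential simple closed curves on $M$) lets us isotope $\Phi$ to a diffeomorphism that preserves each $C_i$ setwise. Such a $\Phi$ then induces a permutation $\sigma$ of the pants $P_1,\dots,P_{2\varkappa-2}$, equivalently an automorphism of the graph $\Gamma(\pazocal P)$ that fixes every edge as a set; in particular $\sigma^2=\mathrm{id}$.

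Next, I would reduce to the case where $\sigma$ is trivial on vertices and $\Phi$ preserves each boundary component of each pair of pants setwise, by composing $\Phi$ with an appropriate product of half twists. A loop at $v$ in $\Gamma(\pazocal P)$ corresponds to a genus-$1$-separating curve $C_i$, and the half twist $\halfTwist_{C_i}$ exactly swaps the two boundary components of $P(v)$ that get glued to form $C_i$; so by composing with such half twists we can arrange that $\Phi$ identifies the two sides of each loop curve correctly. The assumption $\varkappa\geq 3$ enters here to rule out vertex swaps across non-loop edges: if $\sigma$ exchanged two distinct vertices $u,v$ joined by a non-loop edge, a combinatorial argument (using that $\sigma^2=\mathrm{id}$, that $\Gamma(\pazocal P)$ is connected, and that the $3$-valent structure forces compatible swaps of all edges incident to the orbit of $\{u,v\}$) shows that $\sigma$ would extend to a global involution realized by a mapping class analogous to the hyperelliptic involution. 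Such an element fails to lie in the subgroup generated by twists and half twists, but one can check that it cannot be supported on a pants decomposition of a surface of genus $\geq 3$; in fact no such global involution exists preserving each $C_i$ in that range.

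Once $\Phi$ preserves each pair of pants $P_j$ and each of its boundary components setwise, the restriction $\Phi|_{P_j}$ lies in the pure mapping class group of the three-holed sphere $\Sigma_{0,3}$. This group is classically known to be free abelian of rank $3$, generated by Dehn twists about the three boundary components. Gluing back along the $C_i$'s, contributions of boundary-parallel Dehn twists coming from the two pants on either side of $C_i$ combine into Dehn twists $\mathrm{Dehn}_{C_i}$ on $M$. Hence the original $\Phi$ is a product of half twists about $C_1,\dots,C_k$ and Dehn twists about $C_{k+1},\dots,C_{3\varkappa-3}$, as claimed.

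The main obstacle is the middle step, namely the combinatorial/topological argument that the $\varkappa\geq 3$ hypothesis eliminates all non-trivial vertex permutations in $\sigma$ outside those induced by half twists. This requires careful case analysis of automorphisms of the $3$-valent graph $\Gamma(\pazocal P)$ that fix every edge setwise, together with an obstruction argument (using, for instance, the action on homology or the classification of finite-order mapping classes) to exclude the genus-$2$-style hyperelliptic swaps in higher genus.
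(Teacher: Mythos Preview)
The paper does not prove this lemma; it simply attributes the result to Wolf \cite{Wolf}. So there is no in-paper proof to compare against, and your outline is in fact essentially the standard argument.

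Your sketch is sound, but you are overestimating the difficulty of the step you flag as the ``main obstacle.'' The combinatorial claim that an automorphism $\sigma$ of a connected $3$-valent graph fixing every edge setwise must fix every vertex whenever $\varkappa\ge 3$ is immediate. Suppose $\sigma(u)=v\neq u$. Any edge $e'$ incident to $u$ satisfies $\sigma(e')=e'$, so $e'$ is also incident to $\sigma(u)=v$; in particular $e'$ is not a loop (a loop at $u$ would force $\sigma(u)=u$), and hence $e'$ joins $u$ to $v$. Thus all three edges at $u$ run to $v$, and by connectedness $\Gamma(\pazocal P)$ is the theta graph on two vertices, i.e.\ $\varkappa=2$. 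No appeal to finite-order mapping classes, homology actions, or ``hyperelliptic-style'' obstructions is needed.

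A second minor imprecision: after arranging that $\Phi$ preserves each $P_j$ and each boundary component of $P_j$ setwise, the relevant fact is that the mapping class group of $\Sigma_{0,3}$ with boundary components fixed \emph{setwise} (equivalently, the pure mapping class group of the three-punctured sphere) is \emph{trivial}, not $\Z^3$. The group $\Z^3$ is the mapping class group of $\Sigma_{0,3}$ rel boundary. The clean way to finish is to invoke the cutting exact sequence: the kernel of the cutting homomorphism $\Stab_0(\pazocal P)\to\prod_j \MCG(P_j)$ is the free abelian group generated by the Dehn twists $T_{C_1},\dots,T_{C_{3\varkappa-3}}$, and since each $\MCG(P_j)$ (boundary components setwise) is trivial, $\Phi$ lies in this kernel. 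Since Dehn twists about $C_1,\dots,C_k$ are squares of the corresponding half twists, this gives the stated generating set.
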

  \begin{figure}[t]
\centerline{
\begin{tikzpicture}[thick, scale=0.8]
\draw   (1.68,3.2) arc (60:300:1.46cm);
\draw   (2.32,3.2) arc (120:-120:1.46cm);
\draw (1.68,3.2)  .. controls (1.9, 3.1) and (2.1, 3.1) ..  (2.32,3.2) ;
\draw (1.68,0.67)  .. controls (1.9, 0.77) and (2.1, 0.77) ..  (2.32,0.67) ;
    \draw   (1,1.3) arc (260:100:0.6cm);
    \draw   (0.8,1.4) arc (-80:80:0.5cm);
        \draw   (3.25,1.3) arc (260:100:0.6cm);
    \draw   (3.05,1.4) arc (-80:80:0.5cm);
\draw    [densely dotted] (2.75, 1.95) arc (65:115:1.85cm);
\draw   (2.75, 1.95) arc (-65:-115:1.85cm);
\draw   [densely dotted] (4.5, 1.95) arc (60:120:1cm);
\draw (4.5, 1.95) arc (-60:-120:1cm);
\draw   [densely dotted]  (0.5, 1.95) arc (60:120:1cm);
\draw  (0.5, 1.95) arc (-60:-120:1cm);
\draw[->](2.3,2)  .. controls (2.3, 4.5) and (2.1, 4.5) ..  (2.1,3.2);
\draw(2.3,2)  .. controls (2.3, -0.5) and (2.1, -0.5) ..  (2.1,0.7);
\node at (2.75, 3.9) (nodeF) {$180^{\circ}$};
\node at (0.1, 1.5) (nodeG) {$C_1$};
\node at (2, 1.5) (nodeH) {$C_2$};
\node at (4, 1.5) (nodeI) {$C_3$};
\node at (0.8, 3.7) (nodeK) {$P_1$};
\node at (0.8, 0.1) (nodeL) {$P_2$};
\end{tikzpicture}
}
\caption{Hyperelliptic involution.}\label{pretzel1}
\end{figure}
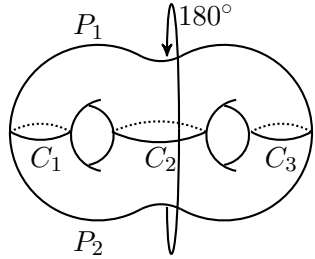

As it is easy to see from the proof, the lemma is true for $\varkappa = 2$ as well, unless $\pazocal P$ 
is the pants decomposition depicted in Figure \ref{pretzel}. 
This is the only case when each of the cycles is mapped to itself but the map on vertices can be nontrivial. For this pants decomposition, the pointwise stabilizer has one more generator which is depicted in Figure \ref{pretzel1}; this mapping class is known as the \textit{hyperelliptic involution}. Note that the hyperelliptic involution indeed preserves the isotopy classes of the curves $C_1, C_2, C_3$, but interchanges the pairs of pants $P_1, P_2$. This leads us to the following definition:
\begin{align*}
\Stab_{0}(\pazocal P) := \{ \Phi \in \MCG(M) ~|~\Phi(C_i) = C_i &\mbox{ for all } C_i \in \pazocal P; \Phi(P_j) = P_j\mbox{ for all } P_j \in \pazocal P\}\,,
\end{align*}
where  $C_1, \dots, C_{3\varkappa-3}$ are the cycles defining $\pazocal P$, and $P_1, \dots, P_{2\varkappa - 2}$ are the pairs of pants of $\pazocal P$. 
This consideration implies the following proposition.

\begin{proposition}\label{stab}
Let $M$ be a closed connected surface of genus $\varkappa \geq 2$. Assume that  $\pazocal P$ is a pants decomposition of $M$. Further, assume that the curves $C_1, \dots, C_k \in \pazocal P$ are genus-$1$-separating, and the curves $C_{k+1}, \dots, C_{3\varkappa - 3}  \in \pazocal P$ are not. Then $\Stab_0(\pazocal P)$ is generated by half twists about the curves $C_1, \dots, C_k$
and Dehn twists about the curves $C_{k+1}$, $\dots$, $C_{3\varkappa - 3}$.
\end{proposition}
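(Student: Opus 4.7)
I would first verify the easy inclusion
$\langle\halfTwist_{C_1},\dots,\halfTwist_{C_k},T_{C_{k+1}},\dots,T_{C_{3\varkappa-3}}\rangle\subseteq\Stab_0(\pazocal P)$. A Dehn twist about $C_i$ is supported in a tubular neighborhood of $C_i$ and therefore acts as the identity on the complementary pairs of pants, preserving each one as an isotopy class. A half twist about a genus-one separating curve $C_i$ is supported in the genus-one component of $M\setminus C_i$, which contains exactly one pair of pants $P$ of $\pazocal P$ (its two remaining boundary circles being identified along a non-separating curve of $\pazocal P$, cf.\ Figure~\ref{loop}); the half twist sends $P$ to itself while fixing the complementary part pointwise. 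Thus every generator listed in the proposition preserves every pair of pants.

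For the reverse inclusion, I would use that $\Stab_0(\pazocal P)\subseteq\Stab_{pw}(\pazocal P)$. When $\varkappa\ge 3$, or when $\varkappa=2$ and $\pazocal P$ is not the exceptional decomposition of Figure~\ref{pretzel}, Lemma~\ref{Wolf2} together with the remark following it asserts that $\Stab_{pw}(\pazocal P)$ itself is generated by exactly the half twists and Dehn twists appearing in the statement; combined with the easy inclusion this gives $\Stab_0(\pazocal P)=\Stab_{pw}(\pazocal P)$ and the proposition in these cases.

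It remains to treat the exceptional case $\varkappa=2$ with $\pazocal P$ as in Figure~\ref{pretzel}. There $k=0$ (all three cycles are non-separating) and $\Stab_{pw}(\pazocal P)=\langle T_{C_1},T_{C_2},T_{C_3},\iota\rangle$, where $\iota$ is the hyperelliptic involution. The action on the unordered pair $\{P_1,P_2\}$ yields a homomorphism $\sigma\colon\Stab_{pw}(\pazocal P)\to\{\pm1\}$ whose kernel is, by definition, $\Stab_0(\pazocal P)$; by the first paragraph each $T_{C_i}$ lies in $\ker\sigma$, while $\sigma(\iota)=-1$, so $[\Stab_{pw}(\pazocal P):\Stab_0(\pazocal P)]=2$. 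Setting $H:=\langle T_{C_1},T_{C_2},T_{C_3}\rangle$, the fact that $\iota$ preserves each isotopy class $C_i$ (possibly reversing orientation) implies that conjugation by $\iota$ carries $T_{C_i}$ to $T_{C_i}^{\pm1}\in H$, so $H$ is normalized by $\iota$; together with $\iota^2=1$ and $\iota\notin H$ (from the first paragraph), this forces $[\Stab_{pw}(\pazocal P):H]=2$ and hence $H=\ker\sigma=\Stab_0(\pazocal P)$. The only genuine subtlety is this last step, and the main obstacle is verifying that the hyperelliptic involution normalizes $H$, which reduces to the standard observation that $\iota$ fixes each curve $C_i$ up to orientation on the genus-two surface of Figure~\ref{pretzel}.
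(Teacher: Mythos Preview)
Your proof is correct and follows the same approach as the paper, which in fact gives no explicit proof at all: after discussing Lemma~\ref{Wolf2}, its extension to $\varkappa=2$, and the hyperelliptic involution in the exceptional case, the paper simply writes ``This consideration implies the following proposition.'' Your argument makes this implicit reasoning explicit, and your index-$2$ computation for the exceptional genus-$2$ decomposition is exactly the kind of detail the paper leaves to the reader.

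One small remark: since the hyperelliptic involution $\iota$ is orientation-preserving and fixes each $C_i$ as an isotopy class, the conjugation formula gives $\iota T_{C_i}\iota^{-1}=T_{C_i}$ (not merely $T_{C_i}^{\pm1}$); in fact $\iota$ is central in the genus-$2$ mapping class group. This does not affect your argument, since either conclusion suffices to show $H$ is normalized by $\iota$.
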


Note that the group $\Stab_{0}(\pazocal P)$ is not Abelian but almost Abelian (i.e. it has an Abelian subgroup of finite index). Namely, the group $\Stab_{0}(\pazocal P)$ has an Abelian normal subgroup $K(\pazocal P) \simeq \Z^{3\varkappa - 3}$ generated by Dehn twists about  the curves $C_1, \dots, C_{3\varkappa - 3}$, and 
$$
\Stab_{0}(\pazocal P) / K(\pazocal P) \simeq \Z_2^k\,,
$$
where $k$ is the number of loops in the graph $\Gamma(\pazocal P)$.
Also note that if the pants decomposition $\pazocal P$ is obtained from a simple Morse 
function $F$, then the subgroup $K(\pazocal P) \subset \Stab_{0}(\pazocal P)$ acts trivially 
on half-twist invariants defined in the previous section, while each generator of the quotient 
group $\Z_2^k$ changes the value of the corresponding invariant.


\medskip

\subsection{Classification of simple Morse functions under the $\SDiff_0(M)$ action: higher genus case}\label{sect:high-genus}

Now we are ready to give the definition of a graph frozenness for higher genus.
\begin{definition} \label{def:freezing-any}
Let  $M$ be a closed connected symplectic surface of genus $\varkappa \geq 2$.
A measured Reeb graph $\Gamma$ compatible with $M$ is \textit{frozen} into $M$ if it is endowed with the following additional data:
\begin{longenum}
\item a pants decomposition $\pazocal P$ of $M$ colored 
by the reduced Reeb graph $\bar \Gamma$;
\item half-twist invariant $\halfTwist_e$ for each loop $e \subset \bar \Gamma$.
\end{longenum}
\end{definition}

\begin{definition}
Let  $(\Gamma_1,\pazocal P_1,  \{ \halfTwist_{e,1}\})$ and 
$(\Gamma_2,\pazocal P_2,  \{ \halfTwist_{e,2}\})$  be two measured Reeb graphs 
frozen into the same surface $M$.
We say that the frozen Reeb graphs $\Gamma_1$ and $\Gamma_2$ are isomorphic if there exists an isomorphism $\phi \colon \Gamma_1 \to \Gamma_2$ of measured Reeb graphs such that
 \begin{longenum}
 \item
$\phi$ gives rise to an isomorphism of the corresponding colored pants decompositions
$\pazocal P_1$ to $\pazocal P_2$;
 \item 
 $\phi$ intertwines half-twist invariants  $\{ \halfTwist_{e,1}\}$ and $\{ \halfTwist_{e,2}\}$.
 \end{longenum}
\end{definition}

%

\begin{remark}
{\rm
Definition \ref{def:freeze-torus} of Reeb graphs frozen into torus can be regarded as a particular case 
of Definition \ref{def:freezing-any}. Indeed, if $M=\T^2$ the reduced graph $\bar \Gamma_F$
 is a circle for any simple Morse function $F$. Then, although one does not have a pants 
 decomposition of the torus, one needs to fix the image of the cycle corresponding to the edge $e$, 
 which boils down to fixing a surjective homomorphism of the homology groups 
 $\pi_* \colon \Hom_1(\T^2,\Z) \to \Hom_1(\bar\Gamma_F, \Z)$. }
\end{remark}

\begin{theorem}\label{Sdiff0action}
Under the above definition of freezing, Theorem \ref{Sdiff0Torus} holds true for arbitrary surfaces of genus $\varkappa \geq 1$: for a symplectic surface $M$ of any genus  there is a one-to-one correspondence between simple Morse functions on $M$, considered up to symplectomorphism isotopic to the identity, and (isomorphism classes of) measured Reeb graphs frozen into $M$. 
\end{theorem}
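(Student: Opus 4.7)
The plan is to mimic the strategy used for the torus case (Theorem \ref{Sdiff0Torus}), replacing the simple homological condition by the combined data of a colored pants decomposition and the half-twist invariants, and using Proposition \ref{stab} as the algebraic engine. Let $F,G\colon M \to \R$ be simple Morse functions and suppose their frozen measured Reeb graphs are isomorphic via a map $\phi$. By Theorem \ref{thm1} there is a symplectomorphism $\Phi'\colon M \to M$ realizing the underlying graph isomorphism, so $\Phi'_*F=G$. By the strengthened lifting statement in Theorem \ref{thmDTM2} (used through Theorem \ref{thm1}), the symplectomorphism $\Phi'$ carries the pants decomposition $\pazocal P_F$ to $\pazocal P_G$ with its coloring, and because $\phi$ is an isomorphism of frozen graphs the two colored pants decompositions coincide as data on $M$. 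Hence the mapping class $[\Phi']$ preserves $\pazocal P$ as a colored pants decomposition, i.e.\ $[\Phi']\in \Stab_0(\pazocal P)$.

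Next I would use Proposition \ref{stab}: the group $\Stab_0(\pazocal P)$ is generated by Dehn twists about all curves of $\pazocal P$ and by half twists about the genus-one-separating curves, and the quotient $\Stab_0(\pazocal P)/K(\pazocal P)\simeq \Z_2^k$ by the Dehn twist subgroup acts freely and transitively on the $2^k$ possible tuples of half-twist invariants (one $\Z_2$ factor per loop of $\bar\Gamma$). Since $\phi$ intertwines the half-twist invariants of $F$ and $G$, the diffeomorphism $\Phi'$ must preserve them, so the class of $[\Phi']$ in $\Stab_0(\pazocal P)/K(\pazocal P)$ is trivial, and therefore $[\Phi']$ is a product of Dehn twists about the curves $C_i\in\pazocal P$.

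Now I would promote these Dehn twists to symplectomorphisms preserving $G$. Each cycle $C_i\in\pazocal P$ sits in a small annular tubular neighborhood on which $G$ is (up to isotopy) a function of the transverse variable only. In Darboux coordinates $(\theta,z)$ on this annulus with $\omega=d\theta\wedge dz$, the map $(\theta,z)\mapsto (\theta+\rho(z),z)$ with $\rho$ a compactly supported bump going through $2\pi$ is symplectic, fixes $G$, and realizes the Dehn twist about $C_i$. Composing $\Phi'$ with a suitable product $\Psi$ of such symplectomorphisms gives $\Phi:=\Psi\circ \Phi'$ with $\Phi_*F=G$ and $[\Phi]=\mathrm{id}\in\MCG(M)$, so $\Phi$ is isotopic to the identity; this proves (b)$\Rightarrow$(a). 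The converse (a)$\Rightarrow$(b) is immediate from the construction of all invariants, since each piece of data is manifestly preserved by symplectomorphisms isotopic to the identity.

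For the existence statement, given a frozen measured Reeb graph $(\Gamma,\pazocal P,\{\halfTwist_e\})$ compatible with $M$, I would first apply Theorem \ref{thm1} to produce a simple Morse function $F'$ realizing the underlying measured Reeb graph $\Gamma$. Its associated pants decomposition $\pazocal P_{F'}$ has graph isomorphic to the prescribed $\bar\Gamma$, so by Theorem \ref{wolfThm} there is a mapping class taking $\pazocal P_{F'}$ to $\pazocal P$ with the prescribed coloring; by Moser's trick we may choose a symplectic representative $\Psi_1$, and replace $F'$ by $\Psi_1{}_*F'$. If the half-twist invariants of the resulting function differ from the prescribed $\{\halfTwist_e\}$, I correct them by composing with symplectic half twists about the relevant genus-one-separating curves of $\pazocal P$ (these can be realized symplectically as $180^{\circ}$-rotations supported near a genus-one component of $M\setminus C_i$, as in Figure \ref{halftwist}). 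Each such half twist toggles exactly the corresponding $\Z_2$ half-twist invariant while preserving $\pazocal P$ and the measured Reeb graph, so finitely many compositions produce the desired function. I expect the main obstacle to be the careful bookkeeping in the symplectic realization of the Dehn and half twists, together with verifying that the quotient $\Stab_0(\pazocal P)/K(\pazocal P)\simeq \Z_2^k$ acts on the tuple of half-twist invariants as one flip per generator, so that the Dehn-twist subgroup is exactly the kernel of the action on those invariants; this is what makes steps 2 and 3 interlock.
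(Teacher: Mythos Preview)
Your proposal is correct and follows essentially the same route as the paper: lift $\phi$ to a symplectomorphism $\Phi'$ via Theorem \ref{thm1}, observe that $[\Phi']\in\Stab_0(\pazocal P)$ because the colored pants decompositions agree, use Proposition \ref{stab} together with the matching of half-twist invariants to conclude $[\Phi']\in K(\pazocal P)$, and then cancel the Dehn twists by symplectic twists along $G$-level curves as in the torus case; the existence argument via Theorem \ref{wolfThm} and subsequent half-twist corrections is likewise the same. Your version is a bit more explicit about the symplectic realization of the Dehn twists and about the $\Z_2^k$ quotient than the paper, which simply refers back to the proof of Theorem \ref{Sdiff0Torus}.
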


\begin{proof}
Let us prove the first statement of Theorem \ref{Sdiff0Torus} for any genus $\varkappa \geq 2$. The implication (a) $\Rightarrow$ (b) is obvious, so it suffices to prove the implication (b) $\Rightarrow$ (a). Assume that $\phi \colon \Gamma_F \to \Gamma_G $ is an isomorphism of measured Reeb graphs frozen into $M$. By Theorem \ref{thm1}, it can be lifted to a symplectomorphism $
 \Phi' \colon M \to M$ such that $\Phi'_* F = G$. Since the isomorphism of reduced Reeb graphs induced by $\phi$ identifies colored pants decompositions $\pazocal P_F$ and $\pazocal P_G$, we have $\Phi' \in \Stab_0(\pazocal P_F) = \Stab_0(\pazocal P_G)$. Therefore, by Proposition \ref{stab} the mapping class of $\Phi'$ is a finite composition of half twists and Dehn twists about connected components of $F$-levels. 
 
 Furthermore, since  $\phi$ intertwines half-twist invariants  $\{ \halfTwist_{e,1}\}$ and 
 $\{ \halfTwist_{e,2}\}$,  for every loop $e$ in the reduced Reeb graph 
 $\bar \Gamma_1$ this means commutativity of the following diagram:
 \begin{equation}\label{cdHighGen}
\centering
\begin{tikzcd}[column sep=-20pt, row sep = large]
&  \mbox{orientations of } C_1(e) = C_2(\phi(e)) &\\
 \mbox{orientations of } e \arrow{rr}{\phi}\arrow{ur}{\halfTwist_{e,1}} & &  \mbox{orientations of } \phi(e) \arrow{ul}[swap]{\halfTwist_{\phi(e), 2}}.
\end{tikzcd}
\end{equation}
This implies that the mapping class of $\Phi'$ actually lies in the normal subgroup of 
$ \Stab_0(\pazocal P_F)$ generated by Dehn twists. 
Finally, we can get rid of Dehn twists in the same way as in the proof of Theorem \ref{Sdiff0Torus}.
\par
 Now, let us prove the second statement. By Theorem \ref{thm1}, there exists a simple Morse 
 function $F' \colon M \to \R$ such that the measured Reeb graph associated 
 with $F'$ is $\Gamma$. Of course, the pants decomposition associated with $F'$ does not 
 have to coincide with the one prescribed by freezing. However, they have the same graphs, 
 so by Theorem \ref{wolfThm} there exists a mapping class which maps one of these pants decompositions into the other one. Taking a symplectic diffeomorphism $\Phi$ belonging to this mapping class, we obtain a function $F = \Phi_*F'$ such that the pants decomposition associated with $F$ is as desired. Further, by composing $\Phi$ with a suitable number of half twists, we adjust the values of half-twist invariants. As a result, we obtain a function $F$ with desired properties.
\end{proof}

\begin{corollary}
A complete set of invariants of a simple Morse function on a closed symplectic surface $M$
with respect to the $\SDiff_0(M)$-action consists of invariants of a measured Reeb graph of the function, 
a choice of a colored pants decomposition of $M$, and a $\Z^k_2$-valued invariant of possible orientations of the cycles described above.
\end{corollary}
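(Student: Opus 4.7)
The plan is to derive this corollary directly from Theorem \ref{Sdiff0action} by unpacking Definition \ref{def:freezing-any}. The theorem already asserts that the $\SDiff_0(M)$-orbit of a simple Morse function is in bijection with the isomorphism class of the associated frozen measured Reeb graph, so the task reduces to identifying the constituent pieces of data that make up such a frozen graph and observing that the third piece is naturally $\Z_2^k$-valued.

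First I would isolate the three layers of invariants. The measured Reeb graph $(\Gamma_F, f, \mu)$ (with the asymptotics at vertices from Proposition \ref{measureproperty}) gives the first layer, already complete for the $\SDiff(M)$-action by Theorem \ref{thm:sdiff-functions}. The second layer, needed when $\varkappa \geq 2$, is the colored pants decomposition $(\pazocal P_F, \bar\Gamma_F, C, P)$ obtained by projecting regular preimages under $\pi \colon M \to \bar \Gamma_F$, together with the fixed bijections of Definition \ref{color}. The third layer is the family of half-twist invariants $\{\halfTwist_e\}$, one for each loop $e \subset \bar \Gamma_F$; since each $\halfTwist_e$ takes exactly two values, the collection sits naturally in $\Z_2^k$, where $k$ is the number of loops in $\bar \Gamma_F$ (equivalently, the number of genus-one-separating curves in $\pazocal P_F$ associated with those loops).

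Next, I would appeal to Theorem \ref{Sdiff0action}: since two simple Morse functions $F, G$ on $M$ are related by an element of $\SDiff_0(M)$ if and only if their frozen measured Reeb graphs are isomorphic, the displayed triple of invariants is both invariant under $\SDiff_0(M)$ (the easy direction, already used in the proof of Theorem \ref{Sdiff0action}) and complete (the content of the implication (b)$\Rightarrow$(a) in that proof, which first uses Theorem \ref{thm:sdiff-functions} to obtain a symplectomorphism lifting the graph isomorphism, then Proposition \ref{stab} to reduce its mapping class to a product of half twists and Dehn twists, then the commutative diagram \eqref{cdHighGen} to remove the half twists, then kills the remaining Dehn twists as in the torus case).

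The low-genus cases then need only a brief remark: for $\varkappa = 0$ the reduced graph is empty and Corollary \ref{cor:sdiff-sphere} applies with no further data; for $\varkappa = 1$ the pants decomposition degenerates and the coloring boils down to the freezing homomorphism $\pi_* \colon \Hom_1(\T^2, \Z) \to \Hom_1(\bar \Gamma_F, \Z)$ of Theorem \ref{Sdiff0Torus}, with $k = 0$ (no loops whose half twists survive in the stabilizer of the torus). Because essentially all the substantive work has already been carried out in the earlier theorems, I do not expect a genuine obstacle here; the only mildly delicate point is just bookkeeping — verifying that the number $k$ of loops in $\bar \Gamma_F$ matches the number of independent half-twist generators of $\Stab_0(\pazocal P_F)/K(\pazocal P_F)$ described right after Proposition \ref{stab}, which we noted is a bijection for every pants decomposition of a surface of genus $\geq 2$.
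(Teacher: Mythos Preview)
Your proposal is correct and follows the same approach as the paper, which gives no explicit proof of this corollary and treats it as an immediate consequence of Theorem~\ref{Sdiff0action} together with the description of the frozen Reeb graph data in Definition~\ref{def:freezing-any}. Your unpacking of the three layers of invariants and the remark on the $\Z_2^k$ count via $\Stab_0(\pazocal P)/K(\pazocal P)$ is exactly what the paper has in mind.
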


\bigskip


\section{Classification of coadjoint orbits of symplectomorphism groups}

\subsection{Graph's anti-derivatives, or circulation functions}\label{sect:circ}
Recall that the regular dual $\SVect^*(M)$ of the Lie algebra $\SVect(M)$ of divergence-free
vector fields on a manifold $M$  is identified with the space $\Omega^1(M) / \diff \Omega^0(M)$ 
of smooth $1$-forms modulo exact $1$-forms on $M$. 
The coadjoint action of a $\SDiff(M)$ on $\SVect^*(M)$ is given by the change of coordinates in (cosets of) 1-forms on $M$ by means of a volume-preserving diffeomorphism:
$$
\Ad^*_\Phi \,[\oneform] = [\Phi^*\oneform].
$$
In what follows, the notation $[\alpha]$ stands for the coset of 1-forms 
$\alpha$ in $\Omega^1(M) / \diff \Omega^0(M)$. In particular, if the form $\alpha$ is closed, then $[\alpha]$ is the cohomology class of $\alpha$.
\par
For a symplectic surface $(M,\omega)$ consider the surjective mapping
$$
\Diff \colon \Omega^1(M) / \diff \Omega^0(M) \to 
C^\infty_0(M):=\left\{ F \in C^\infty(M) ~|~\int_M  \!\!F\omega = 0\right\}
$$
given by taking the vorticity function, 
$$
\Diff[\oneform] = \frac{\diff \oneform}{\omega}.
$$
(One can view this map as taking the vorticity function $\hat\xi= \diff \oneform/\omega$
of a vector field $v=\alpha^\sharp$, as we discussed 
in Section \ref{sect:euler}.)
Clearly, if cosets $[\oneform], [\oneformtwo] \in \SVect^*(M)$ belong to the same coadjoint orbit, then the functions $\Diff[\oneform]$ and $  \Diff[\oneformtwo]$ are conjugated by a symplectic diffeomorphism. In particular, if  $\Diff[\oneform]$ is a simple Morse function, then  so is $  \Diff[\oneformtwo]$.

\begin{definition}
{\rm
We say that a coset of 1-forms $[\oneform] \in \SVect^*(M)$ is \textit{generic} if $\Diff[\oneform]$ is a simple Morse function.
A coadjoint orbit $\orbit \subset \SVect^*(M)$ is \textit{generic} if any coset $[\oneform] \in \orbit$ is generic (equivalently, if at least one coset $[\oneform] \in \orbit$ is generic).
}
\end{definition}

\begin{remark}
{\rm
Assume that $[\oneform]$ and $[\oneformtwo]$ belong to the same generic coadjoint orbit. Then the functions $\Diff[\oneform],$ and $  \Diff[\oneformtwo]$ are simple Morse functions which have isomorphic measured Reeb graphs. Therefore, the measured Reeb graph of $\Diff[\oneform]$ is an invariant of the coadjoint action of $\SDiff(M)$ on $\SVect^*(M)$. However, this invariant is not complete. Indeed, assume that  $\Diff[\oneform]$ and $  \Diff[\oneformtwo]$ have isomorphic measured Reeb graphs. Then there exists a symplectic diffeomorphism $\Phi$ such that $\Phi^*\Diff[\oneformtwo] = \Diff[\oneform]$, and thus the $1$-form
$$
\oneformthree = \Phi^*\oneformtwo - \oneform
$$
is closed. However, it is not necessarily exact, so $\oneform$ and $\oneformtwo$ do not 
necessarily belong to the same coadjoint orbit. Nevertheless, we can conclude that the 
moduli space of $\SDiff(M)$ coadjoint orbits corresponding to the same measured 
Reeb graph is finite-dimensional and its dimension is at most $\dim \Hom^1(M,\R)  = 2\varkappa$, 
where $\varkappa$ is the genus of $M$. 
As we show below, this dimension is actually equal to $\varkappa$. 
The reason for a half-dimensional reduction  
is that the symplectic diffeomorphism $\Phi$ that maps $\Diff[\oneform]$ to 
$  \Diff[\oneformtwo]$ is not unique, and we may use this freedom to vary the cohomology 
class of $\oneformthree$ within a $\varkappa$-dimensional subspace of $\Hom^1(M,\R) $.
}
\end{remark}
\par
Let $[\oneform] \in \SVect^*(M)$ be generic, and let $F = \Diff[\oneform]$.  
Consider the measured Reeb graph $\Gamma_F$. Let $\pi \colon M \to \Gamma_F$ 
be the natural projection. Take any point $x$ lying in the interior of some edge 
$e \in \Gamma_F$. Then $\pi^{-1}(x)$ is a circle $C$. It is naturally oriented 
as the boundary of the set of smaller values. The integral of $\oneform$ 
over $C$ does not depend on the choice of a representative $\oneform \in [\oneform]$.
Thus, we obtain a function
$$
\circulation \colon \Gamma_F \setminus V( \Gamma_F) \to \R
$$
given by 
$$
\circulation(x) = \oint_{\mathclap{\pi^{-1}(x)}} \,\oneform\,,
$$
where  $V( \Gamma_F)$ is the set of vertices  of the graph $ \Gamma_F$.
Note that in the presence of a metric on $M$, the value $\circulation(x)$ is the circulation over the level 
$\pi^{-1}(x)$ of the vector field  $\alpha^\sharp$ dual to the 1-form $ \oneform$\,.

\begin{proposition}\label{circProperties}
The function $\circulation$ has the following properties.
\begin{longenum}
\item Assume that $x,y$ are two interior points of some edge $e \subset \Gamma_F$. Then
\begin{align}\label{stokes}
\circulation(y) - \circulation(x) = \int\limits_x^y f\diff \mu.
\end{align}
\item Let $v$ be a $1$-valent vertex of $\Gamma_F$. Then
\begin{align}\label{1valentcirc}
\lim_{x \to v} \circulation(x) = 0.
\end{align}
\item Let $v$ be a $3$-valent vertex of $\Gamma_F$. Let $e_0$ be the trunk of $v$, and let $e_1, e_2$ be the branches of $v$. Let also $x_i \in e_i$. Then
\begin{align}\label{3valentcirc}
\lim_{\mathclap{x_0 \to v}}\, \circulation(x_0) = \lim_{\mathclap{x_1 \to v}} \,\circulation(x_1)  + \lim_{\mathclap{x_2 \to v}} \,\circulation(x_2). 
\end{align}
\end{longenum}
\end{proposition}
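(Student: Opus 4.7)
The plan is to prove all three items by a unified application of Stokes' theorem, viewing each assertion as a statement about the flux of the exact 2-form $\diff\alpha = F\omega$ through an appropriate 2-dimensional region in $M$. The three cases correspond to three types of local cobordisms realized by the projection $\pi\colon M \to \Gamma_F$: a cylinder for (i), a disk for (ii), and a pair of pants for (iii). In each case the key bookkeeping is that $\pi^{-1}(x)$ is oriented as the boundary of the region of smaller $F$-values, and this orientation convention must be matched with the orientation induced on the boundary of the region we apply Stokes to.

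For (i), I would take $x,y$ in a single edge $e$ with $f(x) < f(y)$ and set $\mathrm{Cyl} := \pi^{-1}([x,y])$, which is an honest smooth cylinder. With the convention above, the oriented boundary is $\partial\mathrm{Cyl} = \pi^{-1}(y) - \pi^{-1}(x)$, so Stokes yields
\[
\circulation(y)-\circulation(x) = \oint_{\pi^{-1}(y)}\alpha - \oint_{\pi^{-1}(x)}\alpha = \int_{\mathrm{Cyl}} \diff\alpha = \int_{\mathrm{Cyl}} F\,\omega.
\]
Since $F = f\circ \pi$ is constant on each fiber of $\pi$, and since $\mu$ is by definition the push-forward of $\omega$ under $\pi$, the last integral equals $\int_{[x,y]} f\,\diff\mu$, which is formula \eqref{stokes}.

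For (ii), a $1$-valent vertex $v$ corresponds to a local extremum $O = \pi^{-1}(v)$. For $x$ close to $v$ in the unique edge at $v$, the set $D_x := \pi^{-1}([v,x])$ is a topological disk, smooth in its interior, with boundary $\pi^{-1}(x)$. A quick orientation check (and an appeal to the Morse-Darboux chart from Lemma \ref{MDL} if one wants to make absolutely sure of smoothness up to $O$) shows that $\partial D_x = \pm\pi^{-1}(x)$, and Stokes gives
\[
\circulation(x) = \pm\int_{D_x} F\,\omega.
\]
As $x\to v$ the disk $D_x$ shrinks to the single point $O$, so the integral tends to $0$ and so does $\circulation(x)$. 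This is \eqref{1valentcirc}.

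For (iii), a $3$-valent vertex $v$ is the self-intersection of a figure-eight critical fiber, and a small neighborhood $U\subset\Gamma_F$ of $v$ bounded by interior points $x_0\in e_0$, $x_1\in e_1$, $x_2\in e_2$ lifts to a region $R := \pi^{-1}(U)\subset M$ that is topologically a pair of pants with three boundary circles $\pi^{-1}(x_0), \pi^{-1}(x_1), \pi^{-1}(x_2)$. The orientation convention and the trunk/branch distinction conspire so that
\[
\partial R = \pi^{-1}(x_0) - \pi^{-1}(x_1) - \pi^{-1}(x_2),
\]
because the trunk edge lies on the opposite side of the critical value from the two branch edges; hence the cylinders $\{F\leq f(x_i)\}$ that define the orientations of $\pi^{-1}(x_1), \pi^{-1}(x_2)$ lie outside $R$, while the one defining the orientation of $\pi^{-1}(x_0)$ contains $R$ (or the reverse, with a global sign flip depending on whether $e_0$ is outgoing or incoming). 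Stokes then gives
\[
\circulation(x_0) - \circulation(x_1) - \circulation(x_2) = \int_R F\,\omega,
\]
and as $x_0,x_1,x_2 \to v$ the region $R$ shrinks to the figure eight $\pi^{-1}(v)$, which has $\omega$-measure zero, so the right-hand side tends to $0$, yielding \eqref{3valentcirc}. The only delicate step is verifying the boundary orientations; I would verify this in a Morse-Darboux chart $(p,q)$ with $S = pq$, where $R$ is a neighborhood of the coordinate cross and the three boundary circles correspond to three of the four hyperbola branches $\{pq = \mathrm{const}\}$ glued globally along the fibers.
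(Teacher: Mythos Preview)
Your proposal is correct and is precisely the argument the paper has in mind: the paper's own proof is the single sentence ``The proof is straightforward and follows from the Stokes formula and additivity of the circulation integral,'' and you have carefully unpacked that into the three local cobordisms (cylinder, disk, pair of pants). One small point you might make explicit in (iii) is that each individual limit $\lim_{x_i\to v}\circulation(x_i)$ exists (not just their alternating sum), which follows from (i) together with the finiteness of $\mu$ near $v$ established in Proposition~\ref{measureproperty}.
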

\begin{proof}
The proof is straightforward and follows from the Stokes formula and additivity of the circulation integral.
\end{proof}

\begin{definition}
{\rm
Let  $(\Gamma, f, \mu)$ be a measured Reeb graph. Any function $\circulation \colon \Gamma \setminus V(\Gamma) \to \R$ satisfying properties listed in Proposition \ref{circProperties} is called a \textit{circulation function}  (or \textit{an
anti-derivative}). A measured Reeb graph endowed with a circulation function is called a \textit{circulation graph}  $(\Gamma, f, \mu, \circulation)$.

Note that the function $f$ on the graph can be recovered from the circulation function 
$\circulation$, as formula (\ref{stokes}) implies: $f= \diff \circulation/\diff \mu$.
Two circulation graphs are isomorphic if they are isomorphic as measured Reeb graphs, and the isomorphism between them preserves the circulation function. 

Above we associated a circulation graph   $\Gamma_{[\oneform]} :=(\Gamma, f, \mu, \circulation)$ to 
any generic coset $[\oneform] \in \SVect^*(M)$.
\par
Similarly, a frozen measured Reeb graph endowed with a circulation function is called a \textit{frozen circulation graph}. Two frozen circulation graphs are isomorphic if they are isomorphic as  measured Reeb graphs frozen into a surface, and the isomorphism between them preserves the circulation function.
}
\end{definition}

\begin{proposition}\label{circFuncs}
Let $(\Gamma, f, \mu)$  be a measured Reeb graph. 
\begin{longenum}
\item
The graph $\Gamma$ admits a circulation function if and only if
\begin{align}\label{zeroMass}
 \int_{{\Gamma}} \!f(x) \diff \mu = 0.
\end{align}
\item
If $\Gamma$ admits a circulation function, then the set of circulation functions on $\Gamma$ is an affine space of dimension equal to the first Betti number of $\Gamma$.
\end{longenum}
\end{proposition}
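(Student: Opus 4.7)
The plan is to reduce the statement to linear algebra on the edges of $\Gamma$, governed by the discrete divergence (boundary) operator of the simplicial chain complex. For each oriented edge $e=[v_-,v_+]$ of $\Gamma$, property (\ref{stokes}) determines $\circulation$ on the interior of $e$ from the single value $a_e:=\lim_{x\to v_-}\circulation(x)$ via $\circulation(x)=a_e+\int_{v_-}^x f \diff\mu$; in particular $\lim_{x\to v_+}\circulation(x)=a_e+b_e$, where $b_e:=\int_e f\diff\mu$. Thus choosing a circulation function on $\Gamma$ is the same as choosing a vector $(a_e)\in\R^E$ subject to the constraints imposed at the vertices by (\ref{1valentcirc}) and (\ref{3valentcirc}).

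A direct case-check (one-valent minima and maxima, together with the two orientation types of three-valent vertices) shows that each of these vertex constraints can be written uniformly as
\[
L(a)(v)\;:=\;\sum_{e\in E^-(v)} a_e \;-\; \sum_{e\in E^+(v)} a_e \;=\; \sum_{e\in E^+(v)} b_e,
\]
where $E^\pm(v)$ denote the edges starting, respectively ending, at $v$. The linear map $L\colon\R^E\to\R^V$ is minus the boundary operator $\partial\colon C_1(\Gamma;\R)\to C_0(\Gamma;\R)$, and the identity $\sum_v L(a)(v)=0$ holds automatically. Since each edge occurs in exactly one $E^+(v)$, summing the right-hand sides yields $\sum_v\sum_{e\in E^+(v)} b_e=\int_\Gamma f\diff\mu$, so consistency of the system $L(a)=c$ forces $\int_\Gamma f\diff\mu=0$. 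Applied to any given $\circulation$, this gives the necessity half of (i).

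For the converse, and for all of (ii), I would invoke the standard fact that for a connected graph $\partial$ has rank $V-1$ and its image is exactly the hyperplane of $0$-chains with vanishing total mass; this is precisely the compatibility locus obtained above. Hence $\int_\Gamma f\diff\mu=0$ is also sufficient for solvability of $L(a)=c$, completing (i), while the solution set is then a coset of $\ker L\simeq H_1(\Gamma;\R)$, of dimension $E-V+1=b_1(\Gamma)$ by rank-nullity, proving (ii).

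The main (and rather modest) obstacle is the bookkeeping needed to check that all four vertex types produce exactly the displayed equation with consistent signs. Once this unification is verified, everything reduces to elementary properties of the boundary operator in the chain complex of a connected graph.
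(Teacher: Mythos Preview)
Your argument is correct. For necessity in (i) and for all of (ii), your approach and the paper's are essentially the same: both identify the difference of two circulation functions with a $1$-cycle, and both obtain the condition $\int_\Gamma f\,\diff\mu=0$ by summing the edge increments $\circulation^+(e)-\circulation^-(e)$ and using the vertex relations to cancel. Your packaging via the linear system $L(a)=c$ with $L=-\partial$ is a clean way to organize this.

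The genuine difference is in the \emph{sufficiency} direction of (i). The paper does not solve the linear system directly; instead it invokes Theorem~\ref{thm1} to realize $(\Gamma,f,\mu)$ as the measured Reeb graph of an actual simple Morse function $F$ on a symplectic surface $M$, observes that $\int_M F\omega=0$ forces $F\omega=\diff\alpha$ for some $1$-form $\alpha$, and then obtains a circulation function by integrating $\alpha$ over level curves. Your route is purely combinatorial: you use the standard fact that for a connected graph the image of $\partial$ is the hyperplane of $0$-chains of total mass zero, so $\int_\Gamma f\,\diff\mu=0$ is exactly the solvability condition. Your argument is more elementary and self-contained (it does not rely on the realization Theorem~\ref{thm1}); the paper's argument is shorter given that Theorem~\ref{thm1} is already in hand, and has the conceptual bonus of exhibiting the circulation function as coming from an honest $1$-form on a surface. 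One small point worth making explicit in your write-up: the edge integrals $b_e=\int_e f\,\diff\mu$ are finite because near a $3$-valent vertex the density $\diff\mu/\diff f$ has only a logarithmic singularity (Proposition~\ref{measureproperty}), so $f\,\diff\mu$ is integrable there; this ensures the limits you use to define $a_e$ and $a_e+b_e$ actually exist.
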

\begin{proof}
Let us prove the first statement. Assume that $\Gamma$ admits a circulation function. Let $e  \in \Gamma$ be an edge of $\Gamma$ going from $v$ to $w$, and let $x \in e$. Let $\circulation^-(e)$ and $\circulation^+(e)$ be the limits of $\circulation(x)$ as $x$ tends to $v$ and $w$, respectively. We have
\begin{align*}
 \int_\Gamma \! f(x) \diff \mu =
\sum_{\mathclap{e \,\in \, E(\Gamma)}}\, \left(\circulation^+(e) - \circulation^-(e) \right).
\end{align*}
On the other hand, properties \eqref{1valentcirc} and  \eqref{3valentcirc} imply that the sum at the right hand side of the latter equation vanishes, and hence \eqref{zeroMass} holds.
\par
Conversely, assume that  \eqref{zeroMass} holds. 
By Theorem \ref{thm1}, one can construct a symplectic surface $M$ and a simple Morse function $F \colon M \to \R$ such that the measured Reeb graph of $F$ is $(\Gamma, f, \mu)$. Since $F$ has zero mean, we have $F =\Diff[\alpha]$ for some $1$-form $\alpha$ on $M$. Integrating $\alpha$ over connected components of level sets of $F$, we obtain a circulation function on $\Gamma$, as desired.
\par
Now, let us prove the second statement. Let $\circulation$ and $\circulationtwo$ be two circulation functions. Then, in view of property \eqref{stokes}, their difference is constant on each edge. Consider the $1$-chain
\begin{align}\label{1chain}
\circulation - \circulationtwo=\sum_{\mathclap{e \,\in\, E(\Gamma)}}\,\,(\circulation(x_e) - \circulationtwo(x_e))e\,,
\end{align}
where $E(\Gamma)$ is the set of edges of $\Gamma$, and $x_e \in e$ is any interior point. Properties \eqref{1valentcirc} and~\eqref{3valentcirc} imply that $\circulation - \circulationtwo$ is a $1$-cycle.
On the other hand, if we add a $1$-cycle to a circulation function, we obtain a circulation function. Therefore, the space of circulation functions is an affine space with underlying vector space $ \Hom_1(\Gamma, \R)$, q.e.d.
\end{proof}


\medskip

\subsection{Coadjoint orbits of $\SDiff(M)$ and $\SDiff_0(M)$}\label{sect:coadj-sdiff}

\begin{theorem}\label{thm4} \label{thm:sdiffM}
Let $M$ be a closed connected symplectic  surface. Then generic coadjoint orbits of $\SDiff(M)$ are in one-to-one correspondence with (isomorphism classes of) circulation graphs  
$(\Gamma, f, \mu, \circulation)$  compatible with $M$.
In other words, the following statements hold:
\begin{longenum}
\item For a symplectic surface $M$ and generic cosets $[\oneform], [\oneformtwo] \in \SVect^*(M)$ 
the following conditions are equivalent:
\begin{longenum} \item $[\oneform]$ and $ [\oneformtwo]$ lie in the same orbit of the $\SDiff(M)$ coadjoint action;  \item circulation graphs $\Gamma_{[\oneform]}$ and $\Gamma_{[\oneformtwo]}$ corresponding to the cosets $[\oneform]$ and $[\oneformtwo]$ 
are isomorphic.\end{longenum}
\item For each circulation graph $\Gamma$ which is compatible\footnote{See Definition \ref{def:compatible} for compatibility of a graph and a surface.}
 with $M$, there exists a generic $[\oneform] \in   \SVect^*(M)$ such that  $\Gamma_{[\oneform]} =(\Gamma, f, \mu, \circulation)$.
\end{longenum}
\end{theorem}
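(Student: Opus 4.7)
For (i), the implication (a) $\Rightarrow$ (b) is direct: if $\Phi^*[\oneformtwo] = [\oneform]$ for some $\Phi \in \SDiff(M)$, then applying $\diff$ and using $\Phi^*\omega = \omega$ yields $\Phi^*G = F$ for the vorticity functions $F = \Diff[\oneform]$ and $G = \Diff[\oneformtwo]$. Theorem \ref{thm:sdiff-functions} then says $\Phi$ descends to an isomorphism of measured Reeb graphs, and equality of circulation functions over regular fibers follows because the exact 1-form representing $\Phi^*\oneformtwo - \oneform \in [0]$ integrates to zero over each cycle.

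For the reverse direction (b) $\Rightarrow$ (a), I would first apply Theorem \ref{thm:sdiff-functions} to the underlying measured Reeb graphs to obtain a symplectomorphism $\Psi$ with $\Psi^*G = F$ lifting the prescribed isomorphism $\phi$. The difference $\gamma := \Psi^*\oneformtwo - \oneform$ is then closed, since $\diff\gamma = (\Psi^*G - F)\omega = 0$, and the hypothesized agreement of circulation functions gives $\oint_C \gamma = 0$ on every regular fiber $C$ of $\pi_F$. Hence $[\gamma] \in \Hom^1(M, \R)$ annihilates the span of fiber cycles, which equals $\ker(\pi_*\colon \Hom_1(M, \R) \to \Hom_1(\Gamma_F, \R))$, placing $[\gamma]$ in the $\varkappa$-dimensional subspace $\pi^*\Hom^1(\Gamma_F, \R)$.

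The crux is then the following lemma: the map $\Upsilon \mapsto [\Upsilon^*\oneform - \oneform]$ from the stabilizer of $F$ in $\SDiff(M)$ into $\Hom^1(M, \R)$ surjects onto $\pi^*\Hom^1(\Gamma_F, \R)$. The image lands there automatically since any such $\Upsilon$ preserves each regular fiber $C$, forcing $\oint_C(\Upsilon^*\oneform - \oneform) = 0$. For surjectivity I would build, edge by edge, a symplectomorphism $\Upsilon$ acting on each cylinder $\pi^{-1}(e)$ by a fiberwise rotation $\theta \mapsto \theta + \varphi_e(f)$ in Darboux coordinates $(f, \theta)$ with $\omega = \diff f \wedge \diff\theta$; the branch/trunk balance relation at trivalent vertices corresponds to a cocycle condition that can be solved to realize any prescribed cohomology class in $\pi^*\Hom^1(\Gamma_F, \R)$. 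The main anticipated obstacle is smoothly patching these rotations across saddle figure-eights: the profiles $\varphi_e$ and their cutoffs must be tuned to the asymptotic expansion of $\mu$ from Proposition \ref{measureproperty} so that $\Upsilon$ extends smoothly through each critical fiber. Once the lemma is in hand, replacing $\Psi$ by $\Psi \circ \Upsilon$ for a suitable $\Upsilon$ kills $[\gamma]$, and the resulting symplectomorphism exhibits the coadjoint equivalence of $[\oneform]$ and $[\oneformtwo]$.

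For part (ii), given a compatible circulation graph $(\Gamma, f, \mu, \circulation)$, I would apply Theorem \ref{thm:sdiff-functions}(ii) to produce a simple Morse function $F \colon M \to \R$ with measured Reeb graph $(\Gamma, f, \mu)$. Proposition \ref{circFuncs}(i) shows $F$ has zero mean, so $F\omega = \diff\oneform_0$ for some 1-form $\oneform_0$ with its own circulation function $\circulation_0$. By property \eqref{stokes} together with the vertex conditions \eqref{1valentcirc}--\eqref{3valentcirc}, the difference $\circulation - \circulation_0$ is constant on each edge and hence a 1-cycle in $\Gamma$; I would realize it by a closed 1-form $\eta$ on $M$ supported in disjoint cylinders $\pi^{-1}(e)$ whose fiber integrals match the edge coefficients of $\circulation - \circulation_0$. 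The coset $[\oneform_0 + \eta]$ then has the prescribed circulation graph.
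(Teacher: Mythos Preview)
Your overall strategy matches the paper's exactly: lift the graph isomorphism to a symplectomorphism via Theorem~\ref{thm:sdiff-functions}, observe that the resulting $\gamma = \Psi^*\beta - \alpha$ is closed with vanishing fiber periods, and then correct $\Psi$ by a symplectomorphism in the stabilizer of $F$ that realizes the class $[\gamma]$. Part~(ii) is likewise the same idea in both.

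The one place where the paper is sharper is precisely where you flag an ``anticipated obstacle.'' You propose to build $\Upsilon$ edge-by-edge from fiberwise rotations $\theta \mapsto \theta + \varphi_e(f)$ and worry about tuning the profiles $\varphi_e$ to the measure asymptotics of Proposition~\ref{measureproperty} so that $\Upsilon$ extends smoothly across figure-eights. That worry is unnecessary. The paper instead proves a short lemma (Lemma~\ref{graphCohomology}): any class $[\gamma]$ annihilating all fiber cycles can be written as $[H\,\diff F]$ for a smooth $H$ with $H/F$ smooth. The proof simply chooses $h$ on $\Gamma_F$ to be a smooth function of $f$ near every vertex (the \emph{same} function on all edges meeting there), vanishing near $f=0$, and with $\int_e h\,\diff f$ equal to the prescribed cocycle value on each edge; then $H = h\circ\pi$ is globally smooth because locally $H$ is a smooth function of $F$. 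With this in hand, one sets
\[
X \;=\; \frac{H}{F}\,\omega^{-1}\diff F,
\]
which is a smooth symplectic vector field tangent to the $F$-fibers, and computes directly that its time-$t$ flow satisfies $[\Psi_t^*\alpha - \alpha] = t[H\,\diff F] = t[\gamma]$. No patching, no appeal to the logarithmic asymptotics, no edge-by-edge matching: smoothness is automatic because $H$ is locally a function of $F$ and $H/F$ is smooth by construction. Your fiberwise-rotation picture is exactly what the flow of $X$ does, but the vector-field formulation makes the regularity issue disappear.
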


Similarly, we have the following result:

\begin{theorem}\label{thm5} \label{thm:sdiff0M}
Let $M$ be a closed connected symplectic surface. Then generic coadjoint orbits of $\SDiff_0(M)$ are in one-to-one correspondence with (isomorphism classes of) circulation graphs frozen into $M$.
\end{theorem}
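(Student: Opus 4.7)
The plan is to run the proof in parallel with that of Theorem~\ref{thm:sdiffM}, replacing Theorem~\ref{thm:sdiff-functions} with its refinement Theorem~\ref{Sdiff0action} at the level of function classification and tracking the finer discrete invariants (colored pants decomposition and half-twist data) throughout. Invariance of the frozen circulation graph under the $\SDiff_0(M)$-action is immediate: the vorticity function is conjugated by any $\Phi\in\SDiff_0(M)$, so Theorem~\ref{Sdiff0action} delivers the frozen measured Reeb graph as an invariant, while the circulation $\circulation(x)=\oint_{\pi^{-1}(x)}\alpha$ depends only on the coset $[\alpha]$ (exact differentials integrate to zero over cycles) and is preserved by orientation-preserving symplectomorphisms.

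For the converse, suppose $[\alpha_1],[\alpha_2]\in\SVect^*(M)$ are generic cosets with isomorphic frozen circulation graphs, and set $F_i:=\Diff[\alpha_i]$. By Theorem~\ref{Sdiff0action} there exists $\Phi_0\in\SDiff_0(M)$ with $\Phi_0^*F_2=F_1$ realizing the isomorphism of frozen measured Reeb graphs; after replacing $\alpha_2$ by $\Phi_0^*\alpha_2$ we may assume $F_1=F_2=:F$ and that the graph isomorphism is the identity. Then $\gamma:=\alpha_2-\alpha_1$ is closed, and the equality of circulation functions forces $\oint_C\gamma=0$ for every connected component $C$ of every regular $F$-level. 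Since such cycles span $\ker(\pi_*\colon\Hom_1(M,\R)\to\Hom_1(\Gamma_F,\R))$, the class $[\gamma]$ lies in the image of $\pi^*\colon\Hom^1(\Gamma_F,\R)\to\Hom^1(M,\R)$, i.e.\ $\gamma$ is cohomologous to $\pi^*\eta$ for a closed 1-form $\eta$ on $\Gamma_F$.

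It remains to produce $\Psi\in\SDiff_0(M)$ preserving $F$ together with its frozen structure and satisfying $[\Psi^*\alpha_1-\alpha_1]=[\gamma]$ in $\Omega^1(M)/\diff\Omega^0(M)$; then $(\Phi_0\circ\Psi^{-1})^*[\alpha_2]=[\alpha_1]$, concluding (b)$\Rightarrow$(a). I will build $\Psi$ as a fiberwise rotation: on each cylinder $\pi^{-1}(e)$, rotate the circular fiber $\pi^{-1}(x)$ by an amount determined by the primitive of $\eta$ along the edge $e$ starting from a chosen basepoint. The compatibility relation \eqref{3valentcirc} at 3-valent vertices, matched to the closedness of $\eta$, ensures that the rotations on the three edges meeting a saddle glue across the critical level into a smooth fiber-preserving symplectomorphism; a local calculation in Morse--Darboux coordinates (Lemma~\ref{MDL}) confirms smoothness at the hyperbolic points. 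Because the rotation amounts are genuinely real-valued (no full $2\pi$-winding appears), the map is connected to the identity by the obvious fiber-preserving isotopy and in particular lies in $\SDiff_0(M)$. For the existence part (ii), I will invoke Theorem~\ref{Sdiff0action} to produce $F$ realizing the prescribed frozen measured Reeb graph; the compatibility condition and Proposition~\ref{circFuncs}(i) give $\int_M F\omega=\int_\Gamma f\,d\mu=0$, so one may write $F\omega=\diff\alpha_0$, and the same fiber-rotation construction adjusts the circulation of $\alpha_0$ to the prescribed $\circulation$.

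The main obstacle will be the construction of $\Psi$: one must verify both $C^\infty$-gluing of the fiber rotations across every saddle level and that the resulting symplectomorphism lies in $\SDiff_0(M)$ rather than carrying a nontrivial mapping class. The latter point is delicate because Proposition~\ref{stab} exhibits Dehn twists and half-twists in $\Stab_0(\pazocal P_F)$; the construction must be arranged so that the assigned rotation amounts never accumulate into such twists. This is precisely where the requirement that $\Psi$ act as the identity on the colored reduced Reeb graph and respect the half-twist invariants is essential, and it is the step I expect to require the most care.
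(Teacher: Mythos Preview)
Your overall strategy is exactly that of the paper: replace Theorem~\ref{thm:sdiff-functions} by Theorem~\ref{Sdiff0action} in the argument for Theorem~\ref{thm:sdiffM}, and then correct the discrepancy $[\gamma]=[\Phi^*\beta-\alpha]$ by a level-preserving symplectomorphism $\Psi$ isotopic to the identity. The paper indeed states that the two proofs are identical modulo this substitution.

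The difference lies in how $\Psi$ is built. You propose a direct ``fiberwise rotation'' prescribed by a primitive of a closed $1$-form on $\Gamma_F$, and then flag the two resulting obstacles: $C^\infty$ gluing across saddle levels, and ruling out accidental Dehn or half-twists so that $\Psi\in\SDiff_0(M)$. The paper sidesteps both issues by realizing $\Psi$ as the time-one flow of a vector field. Using Lemma~\ref{graphCohomology} one writes $[\gamma]=[H\,\diff F]$ with $H/F$ smooth, and sets $X:=(H/F)\,\omega^{-1}\diff F$. Then $i_X\omega=(H/F)\,\diff F$ is closed so $X$ is symplectic, $\diff F(X)=0$ so $X$ preserves $F$, smoothness is automatic (including across hyperbolic levels, thanks to the $H/F$ smoothness clause of Lemma~\ref{graphCohomology}), and $\Psi_t$ is by construction an isotopy from the identity, so no mapping-class accounting is needed. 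A short computation gives $[\Psi_t^*\alpha-\alpha]=t[\gamma]$.

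For the existence part you also take a harder route than necessary. Rather than using a fiber rotation to adjust the circulation of an initial $\alpha_0$, the paper simply adds a closed $1$-form: the map $\rho\colon\Hom^1(M,\R)\to\Hom_1(\Gamma,\R)$, $\rho([\gamma])=\circulation_{\alpha+\gamma}-\circulation_\alpha$, is surjective (it is $\pi_*\circ p$ with $p$ Poincar\'e duality), so one picks $\gamma$ with $\circulation_{\alpha_0+\gamma}=\circulation$.

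Your approach is not wrong, but the obstacles you anticipate are artifacts of the direct geometric construction; the paper's vector-field formulation of the same idea makes them disappear.
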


The proofs of these two theorems are  identical, with the only difference 
that the proof of Theorem \ref{thm4} is based on Theorem \ref{thm1}, 
while the proof of Theorem \ref{thm5} is based on Theorem~\ref{Sdiff0action}. 
For this reason, we shall only prove Theorem \ref{thm4}.
We start with the following preliminary lemma.

\begin{lemma}\label{graphCohomology}
Let $M$ a closed connected oriented surface, and let $F$ be a simple Morse function on $M$. 
Assume that $[\gamma] \in  \Hom^1(M,\R)$ is such that the integral of $\gamma$ over 
any connected component of any $F$-level vanishes. Then there exists a $C^\infty$ 
function $H \colon M \to \R$ such that the $1$-form $H\diff F$ is closed, and its cohomology class 
is equal to $[\gamma]$. Moreover, $H$ can be chosen in such a way that the ratio
$H/F$ is a smooth function.
\end{lemma}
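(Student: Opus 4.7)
The plan is to transfer the problem from the surface to the Reeb graph and solve it there by placing smooth bumps, one on each edge, whose supports avoid the vertices and the preimage of $0$.

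\emph{Reduction to the graph.} The hypothesis says $\langle[\gamma],[C]\rangle=0$ for every connected component $C$ of every $F$-level, in particular for every regular level circle $C_e=\pi^{-1}(x_e)$ with $x_e$ in the interior of an edge $e$. A standard topological fact about Reeb graphs of Morse functions on closed surfaces is that the classes $\{[C_e]\}_{e\in E(\Gamma_F)}$ span $\ker\bigl(\pi_*\colon H_1(M,\R)\to H_1(\Gamma_F,\R)\bigr)$; one proof cuts $M$ along separatrices into the cylinders $\pi^{-1}(e)$ plus neighbourhoods of critical fibres and runs Mayer--Vietoris, using that a continuous section $s\colon\Gamma_F\to M$ splits $\pi_*$. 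Hence $[\gamma]$ annihilates $\ker\pi_*$ and therefore lies in the image of the injective pullback $\pi^*\colon H^1(\Gamma_F,\R)\hookrightarrow H^1(M,\R)$. Fix $\bar\gamma\in H^1(\Gamma_F,\R)$ with $\pi^*\bar\gamma=[\gamma]$ and represent it by a $1$-cochain $c\colon E(\Gamma_F)\to\R$ (every $1$-cochain on a $1$-complex is automatically a cocycle).

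\emph{Construction on the graph.} For each edge $e$ of $\Gamma_F$ choose a compact subinterval $I_e$ in the interior of $e$, disjoint from both endpoints of $e$ and from the unique (if any) interior point of $e$ at which $f=0$. Pick a smooth nonnegative bump function $b_e$ supported in $I_e$ with $\int_e b_e\,\diff f>0$, and set
\[
h|_e:=\frac{c(e)}{\int_e b_e\,\diff f}\,b_e,\qquad h\equiv 0\text{ at each vertex.}
\]
Then $h$ is smooth on every edge, vanishes in an open neighbourhood of each vertex and in an open neighbourhood of the whole zero set $f^{-1}(0)\subset\Gamma_F$, and satisfies $\int_e h\,\diff f=c(e)$ for every edge, so $[h\,\diff f]=\bar\gamma$ in $H^1(\Gamma_F,\R)$.

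\emph{Lifting and verification.} Put $H:=h\circ\pi$. Away from $\mathrm{Crit}(F)$ both $\pi$ and $h$ are smooth, so $H$ is smooth there; and near each critical point $H\equiv 0$ since $h$ vanishes in a neighbourhood of the corresponding vertex. Thus $H\in C^\infty(M)$. Because $H$ is constant on every connected component of every level of $F$, one has $\diff H\wedge\diff F=0$, so $H\,\diff F$ is closed. For any smooth $1$-cycle $\sigma$ on $M$ transverse to the critical fibres, the chain rule yields $\int_\sigma H\,\diff F=\int_{\pi\circ\sigma}h\,\diff f$, and the right-hand side depends only on $\pi_*[\sigma]\in H_1(\Gamma_F,\R)$; consequently $[H\,\diff F]=\pi^*[h\,\diff f]=\pi^*\bar\gamma=[\gamma]$. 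Finally, $H$ vanishes on the open neighbourhood $\pi^{-1}(U)\supset F^{-1}(0)$, where $U\subset\Gamma_F$ is a neighbourhood of $f^{-1}(0)$ on which $h=0$; hence $H/F$, which is manifestly smooth on $M\setminus F^{-1}(0)$, extends by $0$ to a smooth function on all of $M$. The one non-trivial ingredient of this plan is the topological input that level-circle classes span $\ker\pi_*$; granted this, the rest is a soft partition-of-unity construction whose freedom in choosing $h$ on each edge vastly exceeds the finitely many cohomological and smoothness constraints to be satisfied.
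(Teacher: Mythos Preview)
Your proof is correct and follows essentially the same route as the paper: pull the class back to $H^1(\Gamma_F,\R)$, realize it by a function $h$ on the graph that is smooth in $f$, vanishes near the vertices and near $f^{-1}(0)$, and has the prescribed integral against $\diff f$ on each edge, then lift to $M$. The paper simply asserts that such an $h$ exists, whereas you construct it explicitly via bump functions and also spell out the justification for why vanishing on level circles forces $[\gamma]\in\mathrm{im}\,\pi^*$; otherwise the arguments coincide.
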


\begin{proof}
Since the integral of $[\gamma]$ over any connected component of any $F$-level vanishes, 
the cohomology class $[\gamma]$ on $M$ belongs to the image of the inclusion
$$
i \colon \Hom^1(\Gamma_F, \R) \to \Hom^1(M, \R).
$$
Let $\alpha$ be a $1$-cochain on the graph $\Gamma_F$ representing the cohomology class $i^{-1}([\gamma])$. Then $\alpha$ is a real-valued function on the set of edges of $\Gamma_F$. 
Recall that the function $f$ is the pushforward  of the function $F$ to the graph $\Gamma_F$.
Consider a continuous function $ h \colon \Gamma_F \to \R$ such that 
\begin{longenum}
\item it is a smooth function of $f$ in a neighborhood of each point $x \in \Gamma_F$;
\item it vanishes if $f$ is sufficiently close to zero;
\item for each edge $e$, we have
$$
\alpha (e) = \int_e \! h\diff f.
$$
\end{longenum}
Obviously, such  a function does exist. Now, lifting $ h$ to $M$, we obtain a smooth function $H$ with 
the desired properties.
\end{proof}

\begin{proof}[Proof of Theorem \ref{thm4}]
Let us prove the first statement. The implication (a) $\Rightarrow$ (b) is immediate, 
so it suffices to prove the implication (b) $\Rightarrow$ (a). 
Let $\phi \colon \Gamma_{[\oneform]} \to \Gamma_{[\oneformtwo]}$ be an isomorphism 
of circulation graphs. By Theorem \ref{thm1}, $\phi$ can be lifted to a symplectomorphism 
$ \Phi \colon M \to M$ that  maps the function $F = \Diff[\oneform]$ 
to the function $G = \Diff[\oneformtwo]$. Therefore, the $1$-form $\oneformthree$ defined by
$$
\oneformthree =  \Phi^*\oneformtwo - \oneform 
$$
is closed. 
\par
Assume that $ \Psi \colon M \to M $ is a symplectomorphism which maps the function $F$ to itself and is isotopic to the identity. Then the composition $\widetilde\Phi = \Phi\circ\Psi^{-1}$ maps $F$ to $G$, and
$$
[\widetilde\Phi^*\oneformtwo - \oneform] = [\Phi^*\oneformtwo - \Psi^*\oneform] = [\gamma] - [\Psi^*\oneform - \oneform].
$$
We claim that $\Psi$ can be chosen in such a way that $\widetilde\Phi^*\oneformtwo - \oneform$ is exact, i.e.  one has the equality of the cohomology classes
$$
 [\Psi^*\oneform - \oneform] = [\gamma].
$$
Moreover, let us show that there exists a time-independent symplectic vector field  $X$ that
 preserves $F$ and satisfies
\begin{align}\label{homotopy2}
 [\Psi_t^*\oneform - \oneform] = t[\gamma]\,,
\end{align}
where $\Psi_t$ is the phase flow of $X$. 
Differentiating \eqref{homotopy2} with respect to $t$, we get in the left-hand side
$$
 [\Psi_t^*L_X\oneform] =  [L_X\oneform] =  [i_X\diff \oneform] = [F\cdot i_X \omega]\,,
$$
since $L_X \oneform$ is closed and $\Psi_t^*$ does not change its cohomology class. Thus
\begin{align}\label{homology2}
[F\cdot i_X \omega] = [\gamma].
\end{align}
Since $\Phi$ preserves the circulation function, the integrals of $\oneformthree$ over all connected components of $F$-levels vanish. Therefore, by Lemma \ref{graphCohomology}, there exists a smooth function $H$ such that
$$
[\gamma] = [H\diff F].
$$
Now we set 
$$
X :=  \frac{H}{F} \,\omega^{-1}\diff F.
$$
It is easy to see that the vector field $X$ is symplectic, preserves the levels of $F$, and satisfies the equation \nolinebreak \eqref{homology2}. Therefore, its phase flow satisfies the equation \eqref{homotopy2}, and then the symplectomorphism $\widetilde\Phi = \Phi \circ \Psi_1^{-1}$ for the time-one
map $\Psi=\Psi_1$  has the required properties.
\par
Now, let us prove the second statement. By Theorem \ref{thm1}, there exists a simple Morse function $F \colon M \to \R$ such that the measured Reeb graph of $F$ is $(\Gamma, f, \mu)$. 
Since the graph $\Gamma$ admits a circulation function, Proposition \ref{circFuncs} implies that $F$ has zero mean. Therefore, there exists a $1$-form $\alpha \in \Omega^1(M)$ such that $\Diff[\alpha] = F$. 
Further, if $\gamma$ is a closed $1$-form, then $\Diff[\alpha + \gamma] = F$ as well. 
For any $1$-form $\tilde\alpha$ such that  $\Diff[\tilde\alpha] = F$, let $\circulation_{\tilde\alpha}$ denote the corresponding circulation function on $\Gamma$. Consider the mapping
$$
\rho \colon \Hom^1(M,\R) \to \Hom_1(\Gamma,\R) 
$$
given by
$$
\rho\left([\gamma]\right) = \circulation_{\alpha + \gamma} - \circulation_{\alpha}\,,
$$
where the right hand side is defined by equation \eqref{1chain}. The mapping $\rho$ can be written as
$$
\rho([\gamma]) =\!\! \sum_{e \,\in\, E(\Gamma)} \left(\int_{{C(e)}}\!\!\gamma\right) e\,,
$$
where $C(e) = \pi^{-1}(x_e)$ and $x_e \in e$ is any interior point of the edge $e$. 
Therefore, the kernel of the homomorphism $\rho$ consists of those cohomology classes 
which vanish on cycles homologous to connected components of regular $F$-levels, 
and $\dim \Ker \rho = \varkappa$, where $\varkappa$ is the genus of $M$. 
So, by the dimension argument, the homomorphism $\rho$ is surjective. 
(Also note that the mapping $\rho$ can be written as $\pi_*\circ p$, where 
$p \colon  \Hom^1(M,\R) \to  \Hom_1(M,\R)$ is the Poincar\'{e} duality, and 
$\pi_* \colon  \Hom_1(M,\R) \to  \Hom_1(\Gamma,\R)$ is the epimorphism 
induced by the projection $\pi$.)
\par
Now, since the homomorphism $\rho$ is surjective, one can find a closed 1-form $\gamma$ such that 
$$
\rho([\gamma]) = \circulation - \circulation_{\alpha},
$$
where $\circulation$ is a given circulation function on $\Gamma$, and therefore 
$
 \circulation_{\alpha + \gamma} = \circulation,
$
as desired.
\end{proof}

\bigskip


\section{Coadjoint orbits of the group of Hamiltonian diffeomorphisms}\label{HamSection}

Let $M$ be a symplectic manifold, and let $\Phi \in \SDiff_0(M)$ be a symplectic diffeomorphism 
of $M$ isotopic to the identity. Recall that $\Phi$ is called \textit{Hamiltonian} if a path $\Phi_t$ 
joining the identity $\Phi_0=id$ and $\Phi_1=\Phi$ can be chosen in such a way that 
the vector field
$$
X_t := \left(\diffXp{t}\Phi_t\right)\circ \Phi_t^{-1}
$$
is Hamiltonian for every $t$. In other words, a symplectic diffeomorphism is Hamiltonian if it is a time-one map for a suitable time-dependent Hamiltonian vector field. Smooth Hamiltonian diffeomorphisms of $M$ form an infinite-dimensional group, which we denote by ${\rm Ham}(M)$. The aim of this section is to describe generic coadjoint orbits of ${\rm Ham}(M)$ in the case of a two-dimensional surface $M$.
\par
The Lie algebra ${\ham}(M)$ of the group ${\rm Ham}(M)$ consists of all smooth Hamiltonian vector fields on $M$. This Lie algebra is naturally isomorphic to the Lie algebra 
$$
\ham(M) \simeq C_0^\infty(M) = \left\{ F \in C^\infty(M) ~\vert~ \int_M  \!\! F\cdot\omega = 0\,\right\}
$$
of all smooth functions with zero mean with respect to the Poisson bracket.
The adjoint action of the group $\Ham(M)$ on its Lie algebra $\ham(M)$ is the natural action of diffeomorphisms on functions. The Lie algebra $\ham(M)$ is endowed with a bi-invariant inner product
$$
( F,G) :=  \int_M  \!\! FG\cdot \omega,
$$
therefore the regular dual  $\ham^*(M)$ is naturally isomorphic to $\ham(M)$, and the group coadjoint orbits coincide with the adjoint ones. Thus, coadjoint orbits of the group $\Ham(M)$ are exactly the orbits of the natural action of this group on functions. 
\par

\begin{remark}
{\rm
Since $\Ham(M)$ is a subgroup of the group $\SDiff_0(M)$ of symplectomorphisms isotopic 
to identity, the $\SDiff_0(M)$ invariants of functions described by Theorems \ref{Sdiff0Torus} 
and \ref{Sdiff0action} are also $\Ham(M)$ invariants. 
Note that the natural projection of the dual spaces $\SVect^*(M)\to \ham^*(M)$, which follows from the
embeddings of the corresponding Lie algebras $\ham(M) \hookrightarrow\SVect(M)$, is nothing but the 
operator $\Diff[\oneform]:=\diff \oneform/\omega$ defined in Section \ref{sect:circ}.
}
\end{remark}

The image of any $\SDiff_0(M)$-orbit contains whole $\Ham(M)$-orbits, as the $\Ham(M)$-orbits are finer than $\SDiff_0(M)$-ones. Additional invariants of the Hamiltonian orbits can be described in terms of  certain flux-type quantities.

Namely, let $F$ and $G$ be two simple Morse functions of $M$  belonging to the same $\SDiff_0(M)$ orbit. Then there exists an isomorphism $\phi \colon \Gamma_F \to \Gamma_G$ of frozen measured Reeb graphs. Let $\varkappa$ be the genus of $M$. Then the first Betti number of $\Gamma_F$ is equal to $\varkappa$. Therefore, one can choose $\varkappa$ edges $e_1, \dots, e_\varkappa$ of $\Gamma_F$ in such a way that $\Gamma_F \setminus \{ e_1, \dots, e_\varkappa \}$ is a maximal sub-tree of $\Gamma_F$. 
In other words, after dropping edges $e_1, \dots, e_\varkappa$ from the graph $\Gamma_F$ 
it  still remains connected but has no cycles. 

 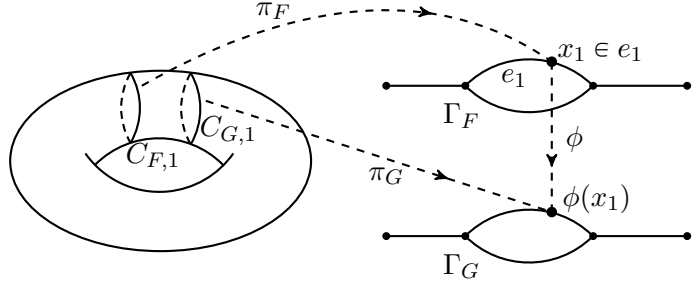
\begin{figure}[t]
\centerline{
\begin{tikzpicture}[thick, rotate = -90]
    \draw (3,-3) ellipse (1.2cm and 2cm);
    \draw   (3.05,-3.87) arc (225:135:1.2cm);
    \draw   (2.9,-4) arc (-55:55:1.2cm);
    \draw  [densely dashed] (1.83,-3.4) arc (-120:-60:0.93cm);
        \draw   (1.83,-3.4) arc (120:60:0.93cm);
            \draw  [densely dashed] (1.83,-2.6) arc (-120:-60:0.95cm);
        \draw   (1.83,-2.6) arc (120:60:0.95cm);
    %
    \node at (2.4,1) () {$\Gamma_F$};
        \node at (4.4,1) () {$\Gamma_G$};
    \node [vertex] at (2,0) (nodeC) {};
    \node [vertex]  at (2,1.05) (nodeD) {};
    \node [vertex] at (2,2.75) (nodeE) {};
    \node [vertex]  at (2,4) (nodeF) {};
        \node [vertex] at (4,0) (nodeC2) {};
    \node [vertex]  at (4,1.05) (nodeD2) {};
    \node [vertex] at (4,2.75) (nodeE2) {};
    \node [vertex]  at (4,4) (nodeF2) {};
    \draw   (nodeC) -- (nodeD);
    \fill (nodeC) circle [radius=1.5pt];
    \fill (nodeD) circle [radius=1.5pt];
    \fill (nodeE) circle [radius=1.5pt];
    \fill (nodeF) circle [radius=1.5pt];
    \draw  (nodeD) arc (-45:44:1.2cm);
    \draw  (nodeD) arc (225:136:1.2cm);
    \draw   (nodeE) -- (nodeF);
        \draw   (nodeC2) -- (nodeD2);
    \fill (nodeC2) circle [radius=1.5pt];
    \fill (nodeD2) circle [radius=1.5pt];
    \fill (nodeE2) circle [radius=1.5pt];
    \fill (nodeF2) circle [radius=1.5pt];
    \draw  (nodeD2) arc (-45:44:1.2cm);
    \draw  (nodeD2) arc (225:136:1.2cm);
    \draw   (nodeE2) -- (nodeF2);
       \node [vertex]  at (1.68,2.2) (nodeG) {};
       \node at (1.55,2.85) () {$x_1 \in e_1$};    
              \node at (1.9,1.7) () {$e_1$};    
               \node at (2.95,-3.1) (nodeI) {$C_{F,1}$};   
                              \node [vertex] at (2,-3.25) (nodeId) {};   
                                                            \node [vertex] at (2.2,-2.4) (nodeJd) {};   
                              \node at (2.6,-2.1) (nodeJ) {$C_{G,1}$};  
                 \draw   [ dashed, ->-] (nodeId) .. controls (1, -1.5) and (0.5,+0.5) .. (nodeG);
          \node at (1, -1.5) () {$\pi_F$};                 
           \fill (nodeG) circle [radius=2pt];
       \node [vertex]  at (3.68,2.2) (nodeG2) {};
                           \draw   [ dashed, ->-] (nodeJd)-- (nodeG2);
                                         \node at (2.7,2.5) () {$\phi$};   
              \node at (3.5,2.8) () {$\phi(x_1)$};     
           \fill (nodeG2) circle [radius=2pt];         
           \draw   [ dashed, ->-] (nodeG) -- (nodeG2);
                     \node at (3.2, 0) () {$\pi_G$};     
\end{tikzpicture}
}
\caption{Construction of the curves $C_{F,i}$ and $C_{G,i}$.}\label{torus2}
\end{figure}

 On each edge $e_i$, we choose an interior point $x_i \in e_i\subset \Gamma_F $ 
 and consider the corresponding images $\phi(x_i)\in \Gamma_G$. Denote by
$$
C_{F,i} = \pi_F^{-1}(x_i), \quad C_{G,i} = \pi_G^{-1}(\phi(x_i))
$$
the corresponding level curves of the functions $F$ and $G$, 
where $\pi_F \colon M \to \Gamma_F$, $\pi_G \colon M \to \Gamma_G$ are canonical projections (see Figure \ref{torus2}). Since the isomorphism $\phi$ preserves freezing, the curves $C_{F,i}$ and $C_{G,i} $  are isotopic for each $i$. 

\begin{definition}
{\rm
The area between the curves $C_{F,i}$ and $C_{G,i}$ is {\it equal to zero} if
\begin{align}\label{areaBetweenCurves}
\int_{\rm Cyl} \!\!\Psi^* \omega = 0
\end{align}
for a smooth mapping $\Psi : {\rm Cyl} \to M$
of the cylinder $\rm{Cyl}$ to $M$ which maps two boundary components of $\rm{Cyl}$ to $C_{F,i}$ and $C_{G,i}$ respectively. 
}
\end{definition}

\begin{lemma}
If the genus of $M$ is $\varkappa \geq 2$, then the integral in the left-hand side does not depend on the choice of the cylinder map $\Psi$. If the genus $\varkappa =1$, i.e., $M=\T^2$, 
the integral is well-defined modulo symplectic area of $\T^2$.
\end{lemma}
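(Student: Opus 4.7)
The plan is to compare two admissible cylinder maps $\Psi_1,\Psi_2\colon \mathrm{Cyl}\to M$ by gluing them into a map from a closed surface and then computing the discrepancy as a degree. First, after precomposing $\Psi_2$ with a diffeomorphism of $\mathrm{Cyl}$ (which leaves $\int_{\mathrm{Cyl}}\Psi_2^*\omega$ unchanged), I may arrange that $\Psi_1|_{\partial\mathrm{Cyl}}=\Psi_2|_{\partial\mathrm{Cyl}}$ as parameterizations of $C_{F,i}\sqcup C_{G,i}$. Gluing one copy of $\mathrm{Cyl}$ carrying $\Psi_1$ to a second copy of $\mathrm{Cyl}$ with reversed orientation carrying $\Psi_2$ by the identity on common boundaries produces a closed oriented genus-one surface $T$ and a (piecewise) smooth map $\Psi\colon T\to M$.

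Since $d\omega=0$, Stokes' theorem together with the cancellation of boundary contributions yields
$$
\int_{\mathrm{Cyl}}\Psi_1^*\omega \;-\; \int_{\mathrm{Cyl}}\Psi_2^*\omega \;=\; \int_T \Psi^*\omega \;=\; \deg(\Psi)\cdot \int_M\omega,
$$
where $\deg(\Psi)\in\Z$ is the topological degree; the last equality uses that $H_2(M;\Z)\cong\Z$ is generated by $[M]$. Thus any two admissible cylinder maps differ by an integer multiple of the total area $\int_M\omega$. This already establishes the $\varkappa=1$ case, once we observe that every integer degree is realized (for instance by winding a cylinder around a meridian of $\T^2$); hence the value $\int_{\mathrm{Cyl}}\Psi^*\omega$ is genuinely only defined modulo $\int_{\T^2}\omega$.

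It remains to handle $\varkappa\geq 2$, which is the main point. I will show that any continuous map $\Psi\colon T\to M$ from a torus into a closed surface of genus $\geq 2$ has degree zero, so the ambiguity disappears. The key input is group-theoretic: in a closed surface group of genus $\geq 2$, every abelian subgroup is cyclic (two commuting nontrivial elements share an axis in $\mathbb{H}^2$). Applied to $\Psi_*\pi_1(T)$, an abelian (in fact cyclically generated) subgroup of $\pi_1(M)$, this forces the image to be cyclic; hence $\Psi$ lifts to a map $\widetilde\Psi\colon T\to\widetilde M$ to a non-compact cover $\widetilde M\to M$ (either the universal cover, or the one corresponding to the cyclic image subgroup). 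Since $\widetilde M$ is a non-compact surface, $H_2(\widetilde M;\Z)=0$, so $\widetilde\Psi_*[T]=0$ and therefore $\deg(\Psi)=0$. Substituting into the displayed formula shows the integral is independent of $\Psi$ in this case. The main obstacle is this degree-vanishing step for $\varkappa\geq 2$; the rest is bookkeeping with Stokes' theorem on the closed form $\omega$.
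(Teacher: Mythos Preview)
Your proof is correct and follows essentially the same architecture as the paper's: glue two cylinder maps along matching boundary parameterizations to obtain a map $\T^2\to M$, and reduce the question to the induced map on $H^2$ (equivalently, the degree). The paper simply asserts that for $\varkappa\geq 2$ the induced map $\Psi_{12}^*\colon H^2(M,\R)\to H^2(\T^2,\R)$ is zero, without further justification; you supply an argument for this step via the fact that abelian subgroups of higher-genus surface groups are cyclic, allowing a lift to a non-compact cover where $H_2$ vanishes. This is a valid and standard way to see it (an alternative one-liner is Kneser's inequality $|\deg\Psi|\cdot|\chi(M)|\leq|\chi(\T^2)|=0$), but the overall route is the same as the paper's.
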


\begin{proof}
Indeed, if we have two different maps $\Psi_1, \Psi_2 \colon \rm{Cyl} \to M$ that coincide on boundary components of $C$, then together they can be regarded as a map $ \Psi_{12} \colon \T^2 \to M$ of a torus $\T^2$ to the surface $M$. If the genus of $M$ is bigger than one, then the induced map $ \Psi_{12}^* \colon \Hom^2(M,\R) \to \Hom^2(\T^2, \R)$ is the zero map, therefore we have
$$
\int_{\T^2}  \!\! \Psi_{12}^* \omega = 0,
$$
which implies that
$$
\int_{\rm{Cyl}} \!\! \Psi_1^* \omega = \int_{\rm{Cyl}} \!\! \Psi_2^* \omega.
$$
If the genus of $M$ is one, then we have
\begin{align*}
\int_{\T^2}  \!\!\Psi_{12}^* \omega = n \int_{M}  \!\! \omega
\end{align*}
where $n \in \Z$, and therefore the integral on the left-hand side of \eqref{areaBetweenCurves} is well-defined modulo a multiple of the area of $M$. So, if $M$ has genus one, we define the area between the curves $C_{F,i}$ and $C_{G,i}$ to be equal to zero if 
\begin{align*}
\int_{\mathrm{Cyl}}  \!\!\Psi^* \omega = n \int_{M}  \!\! \omega
\end{align*}
for $n \in \Z$. 
Note that if this identity holds for some choice of $\Psi$, then it is always 
possible to choose another mapping $\Psi$ in such a way that $n = 0$.
\end{proof}

\begin{theorem}\label{HamThm}
Let $F$ and $G$ be two simple Morse functions on a symplectic surface $M$  belonging to the same $\SDiff_0(M)$ 
orbit. Then they belong to the same $\Ham(M)$ coadjoint orbit if and only if there exists an 
isomorphism $\phi \colon \Gamma_F \to \Gamma_G$ between the corresponding  
measured Reeb graphs  frozen into $M$ such that the area between the curves $C_{F,i}$ and $C_{G,i}$  
defined above  is equal to zero  for each $i$.
\end{theorem}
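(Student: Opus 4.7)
The plan is to use the flux homomorphism $\Flux \colon \SDiff_0(M) \to \Hom^1(M,\R)/\Gamma_\omega$, whose kernel is $\Ham(M)$. By Theorem \ref{Sdiff0action}, since $F$ and $G$ lie in the same $\SDiff_0(M)$-orbit, we obtain a symplectomorphism $\Phi \in \SDiff_0(M)$ with $\Phi_*F = G$ lifting the given isomorphism $\phi$, so in particular $\Phi(C_{F,i}) = C_{G,i}$. Any other such lift differs from $\Phi$ by an element of the stabilizer $\Stab_F := \{\Psi \in \SDiff_0(M) : \Psi_*F = F\}$; hence $F$ and $G$ lie in the same $\Ham(M)$-orbit if and only if $\Flux(\Phi)$ lies in $\Flux(\Stab_F \cap \SDiff_0(M)) \subset \Hom^1(M,\R)/\Gamma_\omega$. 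The key formula I will use is that for any symplectic isotopy $\Phi_t$ from $\id$ to $\Phi$, the pairing $\langle \Flux(\Phi),[\gamma]\rangle$ equals the $\omega$-area of the cylinder swept by $\gamma$.

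The forward direction is immediate. If $\Phi$ is Hamiltonian with isotopy $\Phi_t$ generated by $X_{H_t}$, the area of $\bigcup_t \Phi_t(C_{F,i})$ equals $\int_0^1 \!\! \int_{\Phi_t(C_{F,i})} \diff H_t \, dt = 0$, since each $\Phi_t(C_{F,i})$ is closed. So the area between $C_{F,i}$ and $C_{G,i}$ vanishes for every $i$.

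For the backward direction, I first observe that the cycles $[C_{F,1}], \dots, [C_{F,\varkappa}]$ form a basis of $\ker \pi_* \subset \Hom_1(M,\R)$, being the level cycles over the $\varkappa$ non-tree edges. The area hypothesis therefore reads $\Flux(\Phi) \in \ker(\pi_*)^\perp$ modulo $\Gamma_\omega$. I then claim $\Flux(\Stab_F \cap \SDiff_0(M))$ coincides with the image of $\ker(\pi_*)^\perp$ in $\Hom^1(M,\R)/\Gamma_\omega$. The inclusion $\subseteq$ is clear, since an $F$-preserving isotopy carries every level cycle inside its own level and sweeps no $\omega$-area. For the reverse inclusion, to each non-tree edge $e_i$ I assign a bump function $a_i(z)$ supported strictly inside the open $F$-interval of $e_i$ and define $\eta_i := a_i(F)\,\diff F$ on the cylinder $\pi^{-1}(e_i)$, extended by zero elsewhere. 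The form $\eta_i$ is smooth (since $a_i$ vanishes near each critical value bounding $e_i$) and closed. Because $e_i$ is non-tree, its cylinder does not disconnect $M$, and $[\eta_i] \neq 0$ whenever $\int a_i \neq 0$: otherwise a global primitive of $\eta_i$ would have to take two distinct values on the connected complement of the cylinder. The vector field $X_i := \omega^{-1}(\eta_i)$ is $F$-preserving and symplectic, and its time-one map lies in $\Stab_F \cap \SDiff_0(M)$ with flux class $[\eta_i]$. These $\varkappa$ classes pair diagonally and nontrivially with the $\varkappa$ cycles dual to the non-tree edges, so they form a basis of $\ker(\pi_*)^\perp$; scaling the bumps realizes every class. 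Combining both observations, choose $\Psi \in \Stab_F \cap \SDiff_0(M)$ with $\Flux(\Psi) = \Flux(\Phi)$ modulo $\Gamma_\omega$; then $\Phi \circ \Psi^{-1}$ is Hamiltonian and still maps $F$ to $G$.

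The main obstacle is the surjectivity step above, together with tracking the flux group $\Gamma_\omega$ in the torus case. For $\varkappa \geq 2$ one has $\Gamma_\omega = 0$ (Earle--Eells), so flux takes values in $\Hom^1(M,\R)$ and the area between isotopic cycles is an honest real number. For the torus, $\Gamma_\omega$ is a rank-two lattice whose pairing with $[C_{F,1}]$ is exactly $\Z \cdot \int_M \omega$, which is precisely the $n \int_M \omega$ ambiguity built into the definition of ``zero area'' for genus one. Smoothness of $\eta_i$ at saddle fibers, which might otherwise invoke the asymptotic analysis of Section \ref{MRG} at trivalent vertices, is sidestepped by keeping the support of $a_i$ strictly inside the open $F$-interval of $e_i$.
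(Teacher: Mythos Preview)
Your proof is correct and essentially the same as the paper's: both identify $\Ham(M)$ as the kernel of the flux homomorphism (Banyaga) and correct a given lift $\Phi \in \SDiff_0(M)$ by composing with the time-one map of an $F$-preserving symplectic vector field of the form $\omega^{-1}(h\,\diff F)$ whose flux class equals $\Flux(\Phi)$. The paper packages the construction of $h$ into Lemma~\ref{graphCohomology}, whereas you build it explicitly from bump functions supported on the cylinders over the non-tree edges---this is the same construction carried out edge by edge.
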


Before  proving Theorem \ref{HamThm} we  recall the notion of the \textit{flux homomorphism}. Let $\Phi \in \SDiff_0(M)$ be a symplectic diffeomorphism isotopic to the identity, and let $\Phi_t$ be smooth a path such that $\Phi_0 = \id, \Phi_1 = \Phi$, and $\Phi_t$ is a symplectic diffeomorphism for every $t \in [0,1]$. Then the vector field
$$
X_t = \left(\diffXp{t}\Phi_t\right)\circ \Phi_t^{-1}
$$
is symplectic
 if the 1-form $\alpha_t = i_{X_t} \omega$
is closed for every $t$. Consider the cohomology class
\begin{align}\label{flux1}
\Flux(\Phi_t) = \int_{0}^1 [\alpha_t] \,\diff t \in \Hom^1(M, \R)\,,
\end{align}
where $[\alpha_t]$ is the cohomology class of $\alpha_t$. 
A priori, $\Flux(\Phi_t)$ may depend on the path $\Phi_t$.

\begin{proposition}
The cohomology class $\Flux(\Phi_t)$  is uniquely determined by the homotopy type of the family
of symplectomorphisms $\Phi_t$.  Explicitly,  
let $a(s)$ be any parametrized loop in $M$, $a(s+1) = a(s)$ 
and let $\rm{Cyl} = \R / \Z \times [0,1]$ be a cylinder understood as a family of such loops.
Then for a map
$
\Psi \colon  {\rm Cyl} \to M
$
given by $\Psi(s,t) = \Phi_t(a(s))$ one has 
\begin{align}\label{fluxViaArea}
\langle\, \Flux(\Phi_t), [a]\,\rangle = \int_{\rm{Cyl}}  \!\!\Psi^* \omega\,,
\end{align}
where $\langle\, , \rangle$ denotes the canonical pairing between $\Hom^1(M, \R)$ and $\Hom_1(M, \R)$. 
\end{proposition}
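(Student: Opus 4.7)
The plan is to prove the integral formula \eqref{fluxViaArea} first by a direct computation, and then derive homotopy invariance as a straightforward corollary of the fact that the right-hand side is visibly homotopy invariant. Before starting, note that the integrand of \eqref{flux1} is well-defined cohomologically: since each $\Phi_t$ is symplectic, the field $X_t$ is symplectic as well, and Cartan's formula gives
\[
\diff \alpha_t \,=\, L_{X_t}\omega - i_{X_t}\diff\omega \,=\, 0,
\]
so $[\alpha_t]\in \Hom^1(M,\R)$ is well-defined for each $t$, and $\Flux(\Phi_t)$ makes sense a priori as a function of the path $\{\Phi_t\}$.

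For the formula \eqref{fluxViaArea}, fix a loop $a\colon \R/\Z \to M$ and consider $\Psi(s,t) = \Phi_t(a(s))$ on $\mathrm{Cyl} = \R/\Z \times [0,1]$. By the very definition of $X_t$, one has $\partial_t \Psi(s,t) = X_t(\Psi(s,t))$, while the chain rule yields $\partial_s \Psi(s,t) = (\diff \Phi_t)_{a(s)}(a'(s))$. Using $i_{X_t}\omega = \alpha_t$, the pullback $\Psi^*\omega$ computes as
\[
\Psi^*\omega \,=\, \omega\bigl(\partial_s\Psi,\partial_t\Psi\bigr)\,\diff s\wedge \diff t \,=\, -\alpha_t\bigl((\diff\Phi_t)(a'(s))\bigr)\,\diff s\wedge \diff t \,=\, -(\Phi_t^*\alpha_t)(a'(s))\,\diff s\wedge \diff t.
\]
Integrating over $s$ at fixed $t$ gives $-\int_{\Phi_t\circ a}\alpha_t$. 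Since $\Phi_t$ is isotopic to the identity, the cycle $\Phi_t\circ a$ is homologous to $a$, so the integral equals $\langle [\alpha_t], [a]\rangle$. Integrating over $t$ then recovers \eqref{fluxViaArea} up to an orientation convention on $\mathrm{Cyl}$.

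For homotopy invariance, consider a homotopy $\{\Phi_{t,u}\}_{u\in[0,1]}$ of symplectic isotopies rel endpoints, and the induced map $H\colon \mathrm{Cyl}\times[0,1] \to M$, $H(s,t,u) = \Phi_{t,u}(a(s))$. Its boundary traces are $H(s,0,u) = a(s)$ and $H(s,1,u) = \Phi(a(s))$, both independent of $u$; hence on the two components of $\partial\mathrm{Cyl} \times [0,1]$ the form $H^*\omega$ vanishes because one of its arguments (the $\partial_u$-direction) is pushed forward to $0$. Stokes applied to the closed 2-form $H^*\omega$ on the 3-chain $\mathrm{Cyl}\times[0,1]$ therefore gives
\[
0 \,=\, \int_{\partial(\mathrm{Cyl}\times[0,1])} H^*\omega \,=\, \int_\mathrm{Cyl} \Psi_1^*\omega \,-\, \int_\mathrm{Cyl} \Psi_0^*\omega,
\]
so the integral in \eqref{fluxViaArea} depends only on the relative homotopy class of $\Psi$, and thus only on the homotopy class of $\{\Phi_t\}$. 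Combined with the formula proved above, this shows $\langle \Flux(\Phi_t), [a]\rangle$ is homotopy invariant for every loop $a$, and since $\Hom^1(M,\R)$ is determined by its pairings with $\Hom_1(M,\R)$, the class $\Flux(\Phi_t)$ itself is homotopy invariant. The only real subtlety in all of this is careful sign and orientation bookkeeping; the conceptual content reduces to Cartan's formula plus Stokes' theorem.
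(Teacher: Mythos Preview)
Your argument is correct and reaches the same conclusion as the paper, but the execution differs in a way worth noting. The paper introduces an auxiliary primitive $\beta$ of $\Psi^*\omega$ on $\mathrm{Cyl}$, writes the integral as $\int_{C_1}\beta - \int_{C_0}\beta = \int_0^1 \frac{\diff}{\diff t}\int_{C_t}\beta\,\diff t$, and then computes the $t$-derivative via a \emph{local} primitive $\gamma$ of $\omega$ on $M$ together with the Cartan formula $L_{X_t}\gamma = i_{X_t}\omega + \diff i_{X_t}\gamma$. You bypass both primitives entirely by evaluating $\Psi^*\omega$ directly on $\partial_s\Psi$ and $\partial_t\Psi = X_t\circ\Psi$, which immediately yields $(\Phi_t^*\alpha_t)(a'(s))\,\diff s\wedge\diff t$ up to sign. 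Your route is shorter and more transparent; the paper's has the minor advantage of making the Stokes/boundary structure explicit from the outset. For the homotopy invariance, the paper simply remarks that the right-hand side is unchanged under deformations of $\Psi$ rel boundary, whereas you spell out the Stokes argument on $\mathrm{Cyl}\times[0,1]$; your version is more complete. The sign you flag is indeed just the orientation convention on $\mathrm{Cyl}$ and is harmless.
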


\begin{proof} Choose a $1$-form $\beta$ on ${\rm Cyl}$ such that $ \xi^* \omega = \diff \beta$, and let $C_t = \{(s,u) \in {\rm Cyl} \mid u= t \}$ be the loop on the cylinder corresponding to parameter $t$. Then 
$$
\int_{\rm Cyl} \!\! \Psi^* \omega = \int_{C_1} \!\! \beta - \int_{C_0} \!\! \beta =  \int_{0}^1 \left( \diffXp{t}\int_{C_t} \beta \right) \diff t.
$$
When computing the integral of $\beta$ over $C_t$, we may assume that $\beta = \Psi^*  \gamma$, where the 1-form $\gamma$ is an antidifferential of $\omega$ defined in the small neighborhood of the curve $\Psi(C_t)$. So,
\begin{align*}
\diffXp{t}\int_{C_t}  \!\!\beta = \diffXp{t}\int_{C_t}  \!\!\Psi^* \gamma &=   \diffXp{t}\int_{\Phi_t(a)} \!\!\gamma = \diffXp{t}\int_{a}\Phi_t^*\gamma =
  \int_{a} \diffXp{t} \Phi_t^*\gamma\\ = 
  \int_{a}  \Phi_t^*L_{X_t}\gamma &=   \int_{a}  \Phi_t^*( i_{X_t}\diff \gamma + \diff i_{X_t}\gamma) = 
 \int_{a}  \Phi_t^*\alpha_t =  \langle \, [\alpha_t], [a] \,\rangle,
\end{align*}
which proves formula \eqref{fluxViaArea}.
\end{proof}
 From \eqref{fluxViaArea}, it follows that $\Flux(\phi_t)$ depends only on the homotopy type of $\Phi_t$, since the integral in the right-hand side does not change under a deformation of the map $\Psi$ fixed on the boundary circles of $\rm{Cyl}$. This way we obtain a map
\begin{align*}
 \widetilde{\Flux}\colon {\widetilde\SDiff(M)} \to \Hom^1(M, \R)
\end{align*}
where ${\widetilde\SDiff(M)}$ is the universal cover of ${\SDiff_0(M)}$. It is easy to see that this map is a homomorphism of groups.  Let $L$ be the image of the fundamental group of $\SDiff_0(M)$ under the map $\widetilde \Flux$.
\begin{definition}
The homomorphism 
\begin{align*}
 {\Flux}\colon {\SDiff_0(M)} \to \Hom^1(M, \R) \,/ \,L\,,
\end{align*}
obtained by descending the mapping $\widetilde \Flux$, is called the \textit{flux homomorphism}. 
\end{definition}
It follows from the construction of  ${\Flux}$, that if $\Phi \in \Ham(M)$, then $  {\Flux}(\Phi) = 0$. The converse result is also true:
 
 \begin{theorem}[Banyaga \cite{banyaga}]\label{BanThm} Let ${\Flux}\colon {\SDiff_0(M)} \to \Hom^1(M, \R) \,/ \,L$ be the flux homomorphism. Then
 $\Ker \Flux = \Ham(M)$.
 \end{theorem}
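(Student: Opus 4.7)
The plan is to prove the two inclusions separately, the inclusion $\Ham(M) \subseteq \Ker \Flux$ being essentially formal and the reverse being the real content of the theorem.

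\textbf{The easy direction.} Suppose $\Phi \in \Ham(M)$, so there is a path $\Phi_t$ from $\id$ to $\Phi$ whose generating vector field $X_t = (\partial_t \Phi_t) \circ \Phi_t^{-1}$ is Hamiltonian for every $t$. Then the $1$-form $\alpha_t = i_{X_t}\omega$ is exact, hence its cohomology class $[\alpha_t] \in \Hom^1(M,\R)$ vanishes identically. Integrating in $t$ we obtain $\widetilde\Flux(\Phi_t) = 0$ in $\Hom^1(M,\R)$, and a fortiori $\Flux(\Phi) = 0$ in $\Hom^1(M,\R)/L$. This shows $\Ham(M) \subseteq \Ker \Flux$.

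\textbf{Setup for the converse.} Suppose $\Flux(\Phi)=0$. Choose any symplectic isotopy $\Phi_t$ from $\id$ to $\Phi$; it defines a lift $\widetilde \Phi_t \in \widetilde\SDiff(M)$, and the class $\widetilde\Flux(\widetilde\Phi_t) \in \Hom^1(M,\R)$ projects to $\Flux(\Phi) = 0$ modulo $L$. By definition of $L$, this element lies in the image of $\pi_1(\SDiff_0(M))$ under $\widetilde\Flux$, so we can find a loop $\Lambda_t$ of symplectomorphisms based at the identity with $\widetilde\Flux(\Lambda_t) = \widetilde\Flux(\widetilde\Phi_t)$. Concatenating $\Phi_t$ with the reverse of $\Lambda_t$ yields a new symplectic isotopy from $\id$ to $\Phi$ whose lifted flux is zero in $\Hom^1(M,\R)$. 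So, without loss of generality, we assume from now on that $\int_0^1 [\alpha_t]\,dt = 0$ as a cohomology class, where $\alpha_t = i_{X_t}\omega$.

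\textbf{Straightening the isotopy.} The remaining task is to deform $\Phi_t$, keeping its endpoints fixed, into a Hamiltonian isotopy, i.e.\ one for which $[\alpha_t]=0$ for every $t$ (not merely on average). Pick any smooth curve $c \colon [0,1] \to \Hom^1(M,\R)$ with $c(0)=c(1)=0$ and $c'(t) = [\alpha_t]$; this is possible precisely because the total integral vanishes. Realize $c(t)$ as the flux of an auxiliary symplectic isotopy $\Psi_t$ starting at the identity: choose time-dependent closed $1$-forms $\beta_t$ with $[\beta_t] = c'(t)$, let $Y_t$ be the symplectic vector field dual to $\beta_t$ via $i_{Y_t}\omega = \beta_t$, and let $\Psi_t$ be its flow. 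Then the corrected isotopy $\Phi_t' := \Psi_t^{-1} \circ \Phi_t$ still goes from $\id$ to $\Phi$ (since $\Psi_1 = \id$ after a further homotopy adjustment), and a direct computation shows that its generating field is dual to a $1$-form whose cohomology class is $[\alpha_t] - c'(t) = 0$. Hence $\Phi_t'$ is a Hamiltonian isotopy, proving $\Phi \in \Ham(M)$.

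\textbf{The main obstacle.} The delicate step is the very last one: ensuring that the constructed path $\Psi_t$ can be arranged to close up (i.e.\ $\Psi_1 = \id$) so that $\Psi_t^{-1}\Phi_t$ really ends at $\Phi$. Equivalently, one must show that the subgroup of time-one maps of symplectic isotopies whose flux lies in any prescribed cohomology class is nonempty and compatible with the chain-level construction. This is handled by invoking Moser-type averaging arguments or, alternatively, by the identification of $L$ as the image of $\pi_1$ used above, which guarantees that the auxiliary loops needed for the correction are available. A second technical point is the pointwise regularity of $c(t)$ and $\beta_t$, requiring a smooth choice of representatives of cohomology classes depending on $t$; this is standard but must be done carefully to produce a genuine smooth Hamiltonian isotopy at the end. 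Assembling these pieces yields $\Ker\Flux \subseteq \Ham(M)$ and completes the proof.
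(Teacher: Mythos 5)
First, note that the paper does not prove this statement at all: it is quoted as Banyaga's theorem with a reference to \cite{banyaga}, so there is no in-paper argument to compare yours against, and your proposal has to stand on its own. Your easy direction ($\Ham(M)\subseteq\Ker\Flux$) and your reduction of the converse to the case $\widetilde\Flux(\{\Phi_t\})=0$ by concatenating with a loop realizing the relevant element of $L$ are both correct and standard.

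The genuine gap is exactly at the point you flag as ``the main obstacle,'' and it is not a technical detail that Moser-type averaging will repair: it is the entire content of the theorem. You define $\Psi_t$ as the flow of a time-dependent symplectic field $Y_t$ with $i_{Y_t}\omega=\beta_t$ and $[\beta_t]=[\alpha_t]$. The total flux of this path is $c(1)=0$, but its time-one map $\Psi_1$ is then merely an element of $\Ker\widetilde\Flux$ --- precisely the set whose membership in $\Ham(M)$ you are trying to establish --- and it is \emph{not} the identity in general. No ``further homotopy adjustment'' can help, because a homotopy of the path rel endpoints does not move the endpoint $\Psi_1$; asserting $\Psi_1=\id$ here is circular. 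The standard repair is to build the correcting isotopy out of \emph{autonomous} fields: fix closed $1$-forms $\beta^1,\dots,\beta^{2\varkappa}$ representing a basis of $\Hom^1(M,\R)$, let $Y^i$ be the symplectic fields with $i_{Y^i}\omega=\beta^i$ and $\Psi^i_s$ their flows, write $c(t)=\sum_i c_i(t)[\beta^i]$, and set $\Psi_t:=\Psi^1_{c_1(t)}\circ\cdots\circ\Psi^{2\varkappa}_{c_{2\varkappa}(t)}$. Since $c_i(1)=0$ for every $i$, each factor equals $\id$ at $t=1$, so $\Psi_1=\id$ on the nose; and because each $\Psi^i_s$ is isotopic to the identity and hence acts trivially on cohomology, the flux density of $\Psi_t$ is still $\sum_i \dot c_i(t)[\beta^i]=c'(t)=[\alpha_t]$. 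With this $\Psi_t$ your computation for $\Phi'_t=\Psi_t^{-1}\circ\Phi_t$ goes through: the generating field is dual to $(\Psi_t^{-1})^{*}(\alpha_t-\,i_{\dot\Psi_t}\omega)$, whose class is $[\alpha_t]-c'(t)=0$, so $\Phi'_t$ is a Hamiltonian isotopy ending at $\Phi$. Without this (or an equivalent) construction, your argument does not close.
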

 
 \begin{remark}
 {\rm
 Let us comment on the structure of the group $L$ when $M$ is a surface. 
 As follows from the Moser theorem, the group  $\SDiff_0(M)$ is a deformation retract 
 of the ambient group $\Diffeo_0(M)$ of all diffeomorphisms of $M$ isotopic to the identity. 
 Furthermore, the group $\Diffeo_0(M)$ is contractible if genus of $M$ is $\varkappa \geq 2$, 
 and it is a deformation retract to the subgroup of linear automorphisms for $\varkappa=1$ 
 (see \cite{earle, gramain, MCG}). 
 Therefore, if the genus of $M$ is at least two, then the group $\SDiff_0(M)$ is contractible, 
 and thus $L= 0$. If $M$ is a torus, then  $\SDiff_0(M)$  is a deformation retract to linear 
 automorphisms, which easily implies that 
\begin{align*}
 L = \left(\int_M  \!\!\omega \right)\Hom^1(M,\Z)\,.\end{align*}
Note that formula \eqref{fluxViaArea} implies that the value of the cohomology class $\Flux(\Phi)$ 
on a homology class $[a]$ can be defined as the area between the curves $a$ and 
$\Phi(a)$. If $M$ is a surface of genus $\varkappa \geq 2$, then this area is well-defined. 
If $M$ is a torus, the area between $a$ and $\Phi(a)$ is defined only modulo 
the total area of $M$. However, as follows from our description of the group $L$, 
one still obtains a well-defined element of $ \Hom^1(M, \R) \,/ \,L$. In the proof of Theorem 
\ref{HamThm} below, we interchangeably use both definitions of flux, by means of formula \eqref{flux1} and as the area between curves. 
}
\end{remark}

\begin{proof}[Proof of Theorem \ref{HamThm}]
 First, let $F$ and $G$ be two simple Morse functions of $M$ such that $G = \Phi_*F$, 
 where $\Phi \in \Ham(M)$. Suppose that $\phi \colon \Gamma_F \to \Gamma_G$ is the 
 corresponding isomorphism of frozen measured Reeb graphs. 
 Then, since $\Phi$ is the lift of $\phi$, it follows from the definition of the cycles $C_{F,i}$ 
 and $C_{G,i}$ that the diffeomorphism $\Phi$ maps $C_{F,i}$ to $C_{G,i}$ 
 for each $i$. 
 Therefore, since $\Flux(\Phi) = 0$, the area between the curves $C_{F,i}$ and 
 $C_{G,i}$ is equal to zero, q.e.d.
 \par
 Conversely, assume that $F$ and $G$ are two simple Morse functions of $M$ such that 
 $G = \Phi_*F$, where $\Phi \in \SDiff_0(M)$, and that the area between the curves 
 $C_{F,i}$ and $C_{G,i}$ for all $i$ is equal to zero. 
 The area condition implies that the cohomology class $\Flux(\Phi)$ vanishes 
 on the homology classes $[C_{F,i}]$. Moreover, it follows  from the construction 
 of the curves $C_{F,i}$ that every connected component of any regular $F$-level is 
 homologous to a linear combination of classes  $[C_{F,i}]$, so  $\Flux(\Phi)$ 
 vanishes on all connected components of all regular $F$-levels. 
 Therefore, by Lemma~\ref{graphCohomology}, there exists such a $C^\infty$ 
 function $H \colon M \to \R$  that the $1$-form $H\diff F$ is closed, 
 and its cohomology class is equal to $\Flux(\Phi)$. Set
 $$
 X: = \omega^{-1}H\diff F.
 $$
 Then the flow of the vector field $X$ preserves the symplectic structure and the function $F$. Let $\Psi$ be the time-one map of $X$, and let $ \widetilde\Phi = \Phi \circ \Psi^{-1}$. 
 Then $\widetilde\Phi \in \SDiff_0(M)$, and $\widetilde\Phi_*F = G$. Furthermore, one has
\begin{align*}
 \Flux(\widetilde\Phi) &= \Flux(\Phi) - \Flux(\Psi)  = \Flux(\Phi) -  \int_{0}^1 [i_X \omega] \,\diff t =  \Flux(\Phi) - [H\diff F] = 0.
\end{align*}
 Therefore, by Theorem \ref{BanThm}, the diffeomorphism $\widetilde\Phi$ is Hamiltonian, as required.
 \end{proof}


\bigskip

\section{Related classifications results}
In this section, we discuss the relation of Theorem \ref{thm1} on the classification 
of simple Morse functions on a surface with respect to the $\SDiff$-action 
to the following two previous classification results:
\begin{longenum}
\item[(A)] Dufour, Molino, and Toulet  classified in \cite{DTM} simple Morse fibrations on surfaces with area forms under the action of symplectic diffeomorphisms.
\item[(B)] Bolsinov \cite{Bolsinov} and Kruglikov \cite{Kruglikov} classified Hamiltonian vector fields on surfaces up to the action of arbitrary diffeomorphisms.
\end{longenum}
First note that the classification provided by Theorem \ref{thm1} above is finer than either of these classifications.
Indeed, for the classification (A) we assume that two simple Morse functions $F$ and $G$ on a symplectic surface $M$ are equivalent in the sense of Theorem  \ref{thm1}, i.e. they can be obtained from each other by means of a symplectic diffeomorphism. Then the associated fibrations $\Fibr$ and $\Fibrtwo$, which are given by connected components of $F$- and $G$-levels respectively, are also symplectomorphic. Thus, equivalence of $F$ and $G$ in the sense of Theorem \ref{thm1} implies the equivalence of the associated fibrations $\Fibr$ and $\Fibrtwo$ in the sense of Dufour, Molino, and Toulet. On the other hand, the converse is not true in general: 
a symplectomorphism mapping fibrations $\Fibr$ to $\Fibrtwo$ does not have to map function 
$F$ to $G$. E.g., for  different height functions on the unit sphere, such as $z$ and $2 z$,  
their associated fibrations are the same, formed by circles of latitude on   $S^2$.
\par
Similarly, an equivalence of two simple Morse functions $F$ and $G$ in the sense of Theorem \ref{thm1} implies the equivalence of the associated Hamiltonian vector fields $\omega^{-1}\diff F$ and $\omega^{-1} \diff G$ in the sense of Bolsinov and Kruglikov. The converse statement is again not true in general, since a diffeomorphism mapping the Hamiltonian vector field $\omega^{-1}\diff F$  to the Hamiltonian vector field $\omega^{-1} \diff G$  does not have to be symplectic and does not have to map $F$ to $G$ (note that if at least one of these conditions holds, then the second condition holds as well).
\par
Thus, since the classification described in this paper is more delicate, invariants involved in the above mentioned classifications (A) and (B) should be representable in terms of the invariants of Section \ref{functionsSection}, i.e., in terms of the measured Reeb graph. For classification (A), this was already explained in Section \ref{sdiff-function-classif}. Below we briefly describe the invariants involved in classification (B) and show how they can be computed from the corresponding measured Reeb graph.
\par
Let $M$ be a surface and   $F$  a simple Morse function on \nolinebreak $M$. 
Further, let $X = \omega^{-1}\diff F$ be the  Hamiltonian vector field corresponding to $F$ 
and a certain symplectic form $\omega$ on $M$. Note that given a vector field $X$ on $M$ 
Hamiltonian with respect to some symplectic structure, its 
representation as a Hamiltonian field is not unique. 
Namely, for any  function $G(z)$ of one variable such that $G'(z) \neq 0$, we have
$ X = (\omega_G)^{-1} \diff G(F)$,
where $\omega_G:={G'(F)}\cdot \omega$ is another symplectic form on $M$. Nevertheless, the Reeb graph $\Gamma_F$ (without measure on it) is uniquely determined by the vector field $X$ and does not depend on the representation of $X$ in the form $\omega^{-1}\diff F$. Indeed, generic integral trajectories of $X$ are periodic and coincide with connected components of $F$-levels. Therefore, the fibration of $M$ into connected components of $F$-levels, and thus the Reeb graph $\Gamma_F$, can be reconstructed from $X$. In particular, if the field $X$ is diffeomorphic to another Hamiltonian vector field $Y$, then their Reeb graphs are isomorphic as abstract graphs. 
\par
To emphasize that the Reeb graph associated with a Hamiltonian vector field $X$ depends only 
on \nolinebreak $X$, but not on the particular choice of a Hamiltonian $F$, we denote this 
graph by $\Gamma_X$. Note that the graph $\Gamma_X$ is no longer endowed  with either 
the measure $\mu$, or  the function $f$, since these objects do depend on a representation 
of $X$ in the form $\omega^{-1}\diff F$ and cannot be reconstructed from the field $X$ itself. 
Instead, we have a \textit{period function} $\Pi \colon \Gamma_X \to \R \cup \{\infty \}$ 
intrinsically related to  the field $X$ and defined as follows. For each $x \in \Gamma_X$ 
that is not a vertex, the value $\Pi(x)$ is equal to the period of the trajectory of $X$ that is mapped 
to \nolinebreak $x$ under the projection $\pi \colon M \to \Gamma_X$. 
Now, if $x$ tends to a vertex $v$, then it can be shown that \nolinebreak $\Pi(x)$ has 
a finite limit for a $1$-valent vertex $v$, and it tends to infinity for a $3$-valent vertex $v$.
Thus, $\Pi$ can be extended to a continuous function $\Pi \colon \Gamma_X \to \R \cup \{\infty \}$, 
which is called the {\it period function}. Clearly, two diffeomorphic Hamiltonian vector fields have the 
same period functions. 
One should mention that  there are three kinds of invariants of Hamiltonian fields with respect to the
diffeomorphism action: invariants associated with 
edges of \nolinebreak $\Gamma_X$, invariants associated with $1$-valent vertices, and the 
so-called $\Lambda$-invariants that are associated with $3$-valent vertices. 
It turns out that all invariants of the Hamiltonian vector field $X$ 
under diffeomorphisms can be expressed in terms of the period function \nolinebreak $\Pi$, cf. 
\cite{Arnold, Bolsinov, Kruglikov}. 

\par
Now, we assume that the representation $X =\omega^{-1}\diff F$ is fixed, and show that the 
invariants of $X$ described above can be computed from the measured Reeb graph $\Gamma_F$. 
As we  mentioned above, these invariants are constructed in terms of the period function $\Pi$. 
Therefore, to show that they are expressible in terms of the measure $\mu$ and the function $f$ 
on the graph $\Gamma_F$, it suffices to express the period function \nolinebreak $\Pi$ 
in terms of $\mu$ and $f$. This can be easily done as follows: 
$$
\Pi(x) = \diffFXp{\mu([v,x])}{f(x)}\,,
$$
where $v$ is the starting point of an edge $[v,w]$ of the graph $\Gamma_F$, and $x \in [v,w]$. 
To prove that, consider formula (\ref{period-formula}) from the proof of 
Proposition \ref{measureproperty}, and note that $\Pi(x) = T(f(x))$. Thus, invariants of the 
Hamiltonian vector field $\omega^{-1}\diff F$ under diffeomorphisms can  indeed be computed 
from the measured Reeb graph $\Gamma_F$.
\par
One should note that if $F$ is any Morse function, not necessarily simple, then its Hamiltonian 
vector field $\omega^{-1}\diff F$ has additional invariants that are not expressible in terms of the 
period function, such as the so-called $\Delta$ and $Z$-invariants, 
see \cite{Bolsinov, Kruglikov}. However, if $F$ is a simple Morse function (and the set of such functions is open dense), then those 
invariants become trivial (see page 245 of \cite{BF}), and hence 
the invariants of the corresponding Hamiltonian field with respect to the symplectomorphism action 
are covered by our consideration.

\begin{remark}
Both classifications (A) and (B) have counterparts for finitely smooth fields and diffeomorphisms actions. In the $C^k$-smooth case, the corresponding invariants (such as the invariants $[\zeta_i]$ in classification (A)) become polynomials instead of infinite series, see \cite{Kruglikov}. However, in all classifications considered in this paper the invariants remain unchanged in the $C^k$-case. What is different for finite smoothness is the restriction on admissible measures $\mu$ on  Reeb graphs. In particular, both asymptotics and compatibility conditions at three-valent vertices for the series expansion of $\mu$ (see Section \ref{MRG}) now need to be satisfied only for finite number of terms depending on $k$. All the discrete invariants are the same in both cases of finite and infinite smoothness.
\end{remark}

\bibliographystyle{plain}
\bibliography{sdiff_orbits_fin}

\begin{thebibliography}{10}

\bibitem{Arnold58}
V.I. Arnold.
\newblock On the representation of functions of several variables as a
  superposition of functions of a smaller number of variables.
\newblock {\em Mat. Prosveschenie}, 3:41--61, 1958, English transl.:
  V.\,Arnold, \textit{Collected Works}, Volume 1, 25-46, 2009.

\bibitem{Arn66}
V.I. Arnold.
\newblock Sur la g{\'e}om{\'e}trie diff{\'e}rentielle des groupes de {L}ie de
  dimension infinie et ses applications {\`a} l'hydrodynamique des fluides
  parfaits.
\newblock {\em Annales de l'institut Fourier}, 16(1):319--361, 1966.

\bibitem{Arnold}
V.I. Arnold.
\newblock {\em Mathematical Methods of Classical Mechanics}.
\newblock Springer-Verlag, 1978.

\bibitem{AK}
V.I. Arnold and B.A. Khesin.
\newblock {\em Topological methods in hydrodynamics}.
\newblock Springer, New York, 1998.

\bibitem{banyaga}
A.~Banyaga.
\newblock Sur la structure du groupe des diff{\'e}omorphismes qui
  pr{\'e}servent une forme symplectique.
\newblock {\em Commentarii Mathematici Helvetici}, 53(1):174--227, 1978.

\bibitem{Bolsinov}
A.V. Bolsinov.
\newblock A smooth trajectory classification of integrable {H}amiltonian
  systems with two degrees of freedom.
\newblock {\em Sbornik: Mathematics}, 186(1):1--27, 1995.

\bibitem{BF}
A.V. Bolsinov and A.T. Fomenko.
\newblock {\em Integrable {H}amiltonian systems: geometry, topology,
  classification}.
\newblock CRC Press, 2004.

\bibitem{bolosh}
A.V. Bolsinov and A.A. Oshemkov.
\newblock Singularities of integrable {H}amiltonian systems.
\newblock In {\em Topological methods in the theory of integrable systems},
  pages 1--67. Cambridge Scientific Publishers, 2006.

\bibitem{ChSv}
A.~Choffrut and V.~{\v{S}}ver{\'a}k.
\newblock Local structure of the set of steady-state solutions to the 2{D}
  incompressible {E}uler equations.
\newblock {\em Geometric and Functional Analysis}, 22(1):136--201, 2012.

\bibitem{CDV}
Y.~Colin~de Verdi{\`e}re and J.~Vey.
\newblock Le lemme de {M}orse isochore.
\newblock {\em Topology}, 18(4):283--293, 1979.

\bibitem{DTM}
J.-P. Dufour, P.~Molino, and A.~Toulet.
\newblock Classification des syst{\`e}mes int{\'e}grables en {dimension
  \nolinebreak $2$} et invariants des mod{\`e}les de {F}omenko.
\newblock {\em Comptes rendus de l'{A}cad{\'e}mie des sciences. {S}{\'e}rie 1,
  {M}ath{\'e}matique}, 318(10):949--952, 1994.

\bibitem{earle}
C.J. Earle and J.~Eells.
\newblock A fibre bundle description of {T}eichm{\"u}ller theory.
\newblock {\em Journal of Differential Geometry}, 3(1-2):19--43, 1969.

\bibitem{MCG}
B.~Farb and D.~Margalit.
\newblock {\em A primer on mapping class groups}.
\newblock Princeton University Press, 2012.

\bibitem{gramain}
A.~Gramain.
\newblock Le type d'homotopie du groupe des diff{\'e}omorphismes d'une surface
  compacte.
\newblock {\em Annales scientifiques de l'{\'E}cole Normale Sup{\'e}rieure
  (s{\'e}rie 4)}, 6(1):53--66, 1973.

\bibitem{HT}
A.~Hatcher and W.~Thurston.
\newblock A presentation for the mapping class group of a closed orientable
  surface.
\newblock {\em Topology}, 19(3):221--237, 1980.

\bibitem{KhM}
B.~Khesin and G.~Misio{\l}ek.
\newblock Euler equations on homogeneous spaces and {V}irasoro orbits.
\newblock {\em Advances in Mathematics}, 176(1):116--144, 2003.

\bibitem{Kir1}
A.A. Kirillov.
\newblock Orbits of the group of diffeomorphisms of a circle and local {L}ie
  superalgebras.
\newblock {\em Functional Analysis and Its Applications}, 15(2):135--137, 1981.

\bibitem{kirillov1993orbit}
A.A. Kirillov.
\newblock The orbit method, {II}: Infinite-dimensional {L}ie groups and {L}ie
  algebras.
\newblock {\em Contemporary Mathematics}, 145:33--33, 1993.

\bibitem{Kruglikov}
B.S. Kruglikov.
\newblock Exact smooth classification of {H}amiltonian vector fields on
  two-dimensional manifolds.
\newblock {\em Mathematical Notes}, 61(2):146--163, 1997.

\bibitem{MSh}
V.~P. Maslov and A.~I. Shafarevich.
\newblock Asymptotic solutions of {N}avier-{S}tokes equations and topological
  invariants of vector fields and {L}iouville foliations.
\newblock {\em Theoretical and Mathematical Physics}, 180(2):967--982, 2014.

\bibitem{Moser65}
J.~Moser.
\newblock On the volume elements on a manifold.
\newblock {\em Transactions of the American Mathematical Society},
  120(2):286--294, 1965.

\bibitem{Putman}
A.~Putman.
\newblock A note on the connectivity of certain complexes associated to
  surfaces.
\newblock {\em L'Enseignement Math\'ematique}, 54:286--301, 2008.

\bibitem{Seg1}
G.~Segal.
\newblock Unitary representations of some infinite dimensional groups.
\newblock {\em Communications in Mathematical Physics}, 80(3):301--342, 1981.

\bibitem{Shn}
A.I. Shnirelman.
\newblock Lattice theory and flows of ideal incompressible fluid.
\newblock {\em Russian Journal of Mathematical Physics}, 1(1):105--113, 1993.

\bibitem{Toulet}
A.~Toulet.
\newblock {\em Classification des syst{\`e}mes int{\'e}grables en dimension 2}.
\newblock PhD thesis, Universit{\'e} de Montpellier 2, 1996.

\bibitem{Wit}
E.~Witten.
\newblock Coadjoint orbits of the {V}irasoro group.
\newblock {\em Communications in Mathematical Physics}, 114(1):1--53, 1988.

\bibitem{Wolf}
U.~Wolf.
\newblock {\em Die Aktion der Abbildungsklassengruppe auf dem Hosenkomplex}.
\newblock PhD thesis, Karlsruhe, 2009.

\end{thebibliography}

\end{document}